\renewcommand{\th}{s}
\theoremstyle{plain}
\newtheorem{thm}{Theorem}[section]
\newtheorem{lem}[thm]{Lemma}
\newtheorem{prop}[thm]{Proposition}
\theoremstyle{definition}
\newtheorem{rem}[thm]{Remark}
\newcounter{changescounter}
\title[A coordinate-free theory
of virtual holonomic constraints]%
{A coordinate-free theory \\ of virtual holonomic constraints}
\author[Luca Consolini, Alessandro
  Costalunga, and Manfredi Maggiore]{}
\email{luca.consolini@unipr.it}
\email{alessandro.costalunga@studenti.unipr.it}
\email{maggiore@ece.utoronto.ca}
\keywords{Mechanical control systems; virtual holonomic constraints; inverse
Lagrangian problem; geometric mechanics.}
\thanks{Some of the ideas of this paper appeared in preliminary form
  in~\cite{ConCos15}} \thanks{$^*$ Corresponding author}
\begin{document}
\maketitle
 \thispagestyle{fancy} \cfoot{Published in {\em Journal of Geometric
     Mechanics}, vol. 10, no. 4, pp. 467--502, December 2018}
\headrulewidth 0 pt
\chead{}\lhead{}\rhead{}

\centerline{\scshape Luca Consolini, Alessandro Costalunga,}
\medskip
{\footnotesize
 \centerline{Dipartimento di Ingegneria dell'Informazione, Universit\`a di Parma
   } 
\centerline{Parco Area delle Scienze 181/a, 43124 Parma,
   Italy}
}
\medskip

\centerline{\scshape and Manfredi Maggiore$^*$}
\medskip
{\footnotesize
  \centerline{Department of Electrical and Computer Engineering,
    University of Toronto} 
\centerline{10 King's College Road, Toronto, Ontario, M5S 3G4, Canada}
}






\bigskip


\begin{abstract}
This paper presents a coordinate-free formulation of virtual holonomic
constraints for underactuated Lagrangian control systems on Riemannian
manifolds.  It is shown that when a virtual constraint enjoys a
regularity property, the constrained dynamics are described by an
affine connection dynamical system. The affine connection of the
constrained system has an elegant relationship to the Riemannian
connection of the original Lagrangian control system. Necessary and
sufficient conditions are given for the constrained dynamics to be
Lagrangian. A key condition is that the affine connection of the
constrained dynamics be metrizable. Basic results on metrizability of
affine connections are first reviewed, then employed in three examples
in order of increasing complexity. The last example is a double
pendulum on a cart with two different actuator configurations. For
this control system, a virtual constraint is employed which confines
the second pendulum to within the upper half-plane.
\end{abstract}


\section{Introduction}\label{sec:intro}

A virtual holonomic constraint (VHC) for a Lagrangian control system
is a collection of relations among the configuration variables of the
system that can be made invariant via feedback control.  The precise
meaning of this terminology is clarified in what follows, but the key
idea is to emulate via feedback control the presence of a holonomic
constraint in the Lagrangian control system. By appropriate design of
the VHC, the constrained system may display useful properties.

The notion of VHC can be traced back to early twentieth century work
of P. Appell in~\cite{appell1911} and H. Beghin in~\cite{Beghin22},
but it has emerged prominently in the last fifteen years as a tool for
control of biped robots (see,
e.g.,~\cite{nakanishi-2000,PleGriWesAbb03,WesGriKod03,WesGriCheChoMor07,CheGriShi08}),
and as an approach to motion planning for general robotic systems
(e.g.,~\cite{ShiPerWit05,ShiRobPerSan06,ShiFreGus10,FreRobShiJoh08}).
In VHC-based robot control, the motion one wants to induce is
represented implicitly in terms of constraints on the robot's
configuration variables, and the control loop is designed to
asymptotically stabilize a subset of the state space, the so-called
constraint manifold.  This control philosophy stands in contrast to
the standard technique of parametrizing a desired motion by time, and
then stabilizing the resulting reference signals.  The VHC control
paradigm has proved particularly effective in inducing complex
behaviours in underactuated robots, and gives rise to a feedback loop
that is intrinsically robust because it is not driven by any exogenous
signal.

For biped robots, Grizzle and collaborators (see,
e.g.,~\cite{WesGriCheChoMor07}) defined VHCs in terms of invariance of
a submanifold of the state space.  The paper~\cite{6286994}
generalized Grizzle's notion of VHC to mechanical control systems
whose generalized coordinates are linear displacements or angles
(i.e., systems whose configuration manifold is a generalized cylinder)
and whose degree of underactuation is equal to one. For this class of
systems, the authors of~\cite{nolcos_2013_lagr,MohMagCon17} showed
that, generically, the constrained dynamics in the presence of a VHC
do {\em not} possess a Lagrangian structure. They then gave necessary
and sufficient conditions for a Lagrangian structure to exist.
The theory of~\cite{6286994,MohMagCon17} does not handle mechanical
systems whose configuration space is not a generalized cylinder, or
whose degree of underactuation is greater than one. To illustrate, the
configuration manifold of a rigid body is $\mathsf{SE}(3)$, a manifold
that cannot be handled by the theory
in~\cite{6286994,MohMagCon17}. Similarly, a double pendulum on a cart
has degree of underactuation two, which again is not contemplated in
the theory of~\cite{6286994,MohMagCon17}.

{\bf Main contributions.}  This paper generalizes the theory
in~\cite{6286994,MohMagCon17} by presenting a coordinate-free
formulation of VHCs for arbitrary configuration manifolds and
arbitrary degrees of underactuation.  We give a new geometric
definition of VHC, and define a regularity property of VHCs in terms
of transversality of two subbundles. We show that a regular VHC
induces on the constraint manifold an affine connection, the so-called
{\em induced connection.}  In the absence of a potential function,
orbits of the constrained dynamics are geodesics of this induced
connection.  We give an explicit characterization of the constrained
dynamics in coordinates with formulas for the Christoffel symbols of
the induced connection.  We show that the problem of determining
whether or not the constrained dynamics are Lagrangian amounts in
great part to determining whether or not the induced connection is
metrizable, i.e., it is Riemannian for a suitable metric. Leveraging
this insight, and using existing results from the theory of affine
connections, we give conditions for the existence of a Lagrangian
structure for the constrained dynamics arising from a regular
VHC. These conditions are applicable to Lagrangian control systems
with arbitrary degree of underactuation. In the special case when the
subbundle associated with the control accelerations is orthogonal to
the constraint manifold, the constrained dynamics are always
Lagrangian, and we show that they coincide with the dynamics one would
obtain in the presence of an ideal holonomic constraint. Thus, the
classics mechanics notion of holonomic constraint is a special
case of our theory.  For systems with underactuation degree one, our
results provide an elegant geometric insight for the results
in~\cite{MohMagCon17}.

The focus of this paper is on the case when all control
inputs are used to enforce the VHC, so that the constrained dynamics
are unforced. In the more general case when the constrained dynamics
are forced, the question of existence of a Lagrangian structure for
the constrained dynamics turns into the more general question of
feedback equivalence of the constrained dynamics to a Lagrangian
control system. A local version of the latter question has been investigated
in~\cite{RicRes10} for general control systems on smooth manifolds
near zero velocity points. 

{\bf Organization of the paper.}  In Section~\ref{sec:preliminaries}
we review concepts of Riemannian geometry, and the definition
from~\cite{geomControlBullo} of a Lagrangian (control) system on a
Riemannian manifold.  Section~\ref{sec:background_vhc} reviews the
definition of VHC from~\cite{6286994} and the Lagrangian properties of
the constrained dynamics from~\cite{MohMagCon17}, valid for the case
of systems with degree of underactuation one.
Section~\ref{sec:coordinate_free} formulates a new coordinate-free
theory of VHCs, characterizing the regularity of VHCs in terms of
transversality of the constraint manifold and the distribution induced
by control forces.  It is shown that a VHC induces an affine
connection on the constraint manifold, and this connection is then
used to characterize the constrained dynamics.  In
Section~\ref{sec:ILP} we give necessary and sufficient conditions
under which the constrained dynamics are Lagrangian. We also treat the
special case when the distribution induced by control forces is
orthogonal to the constraint manifold. In
Section~\ref{sec:metrizability} we give a tutorial overview of
holonomy groups and results on metrizability of affine connections,
treating the special cases of flat connections, of simply connected
constraint manifolds, and one and two-dimensional constraint
manifolds. Here we show that the results of
Section~\ref{sec:background_vhc} are a special case of the general
theory of this paper. In Section~\ref{sec:examples}, we present three
examples illustrating the theory, in order of increasing
complexity. The last example is a double pendulum on a cart with a VHC
that constrains the angle of the second pendulum to be a function of
the angle of the first pendulum, in such a way that the second
pendulum is always confined to the upper half plane.

\section{Preliminaries}\label{sec:preliminaries}
In this section we present the notation used in this paper, review
notions of Riemannian geometry, and review the definition of a
Lagrangian (control) system on a Riemannian manifold. All results are
found in~\cite{book:598491,doCarmo,Boo86,geomControlBullo}.

{\bf Smooth manifolds.} If $\M$ is a smooth manifold, we denote by
$C^\infty(\M)$ the ring of smooth real-valued functions on $\M$, by
$\X(\M)$ the set of smooth vector fields on $\M$, and by $\Omega(\M)$
the set of smooth one-forms on $\M$. The tangent space to $\M$ at $p
\in \M$ is denoted by $T_p \M$, and it dual, the cotangent space, is
denoted by $T_p^\star \M$.  We denote by $T\M$ and $T^\star \M$ the
tangent and cotangent bundles of $\M$, and by $\pi: T\M \to \M$ the
natural projection on $T\M$. An element of $T\M$ will be denoted by
$v_q$, with the understanding that $v_q \in T_q \M$. If $\N$ is a
submanifold of $\M$, $T\M|_\N$ denotes the restriction of $T\M$ to $\N$,
defined as $T\M|_\N = \bigcup_{p \in \N} T_p \M$.

If $(U,(x^1,\ldots,x^n))$ is a coordinate chart of $\M$, for each $p
\in U$ the basis for $T_p \M$ induced by the chart is denoted by
$\der{1}{p},\ldots, \der{n}{p}$. The vector fields
$\{\de{1},\ldots,\de{n}\}$ form a {\em local frame} for $T\M$.
If $F:\M \to \N$ is a smooth function between smooth manifolds and
$p\in \M$, we let $F_p :=F(p)$, and we denote by $dF_p : T_p \M \to
T_{F(p)} \N$ the differential of $F$ at $p$.
If $F :U \to V $ is a smooth function and $U \subset \Re^n$, $V
\subset \Re^m$ are open sets, we denote by $\partial_{x^i} F$ the
partial derivative $F$ with respect to its $i$-th argument. The
notation $\partial^2_{x^i x^j} F$ indicates second-order partial
differentiation with respect to the $i$-th and $j$-th argument.  More
generally, if $U \subset \Re^n$ is an open set and $F : U \to F(U)
\subset M$ is smooth, then we denote $\partial_{x^i} F:= d F_x
(\diff{x^i})$, where $\diff{x^i}$ denotes the $i$-th natural basis
vector of $T_x \Re^n$. In the special case of a function of one
variable in $\Re$ or $\Se^1$, we let $F'(x) :=\partial_x F$ and
$F''(x):=\partial^2_x F$.

A smooth function $h: \M \to \Re^k$ is a {\em submersion} if $\rank
dh_p = k$ for all $p \in \M$.  If $\rank dh_p=k$ for all $p \in
h^{-1}(0)$, then we say that $0$ is a {\em regular value of $h$}.  If
$f \in C^\infty(\M)$ and $X \in \X(\M)$, the {\em Lie derivative} of
$f$ along $X$ is the smooth function $X(f) \in C^\infty(\M)$ defined
as $p \mapsto X(f)(p) := df_p(X(p))$. If $X,Y \in \X(\M)$, $[X,Y] \in
\X(\M)$ denotes the {\em Lie Bracket} of $X$ and $Y$.

{\bf Riemannian manifolds and connections.} A {\em Riemannian
  manifold} is a pair $(\M,g)$, where $\M$ is a smooth manifold, and
$g: T\M \times T\M \to \Re$, the {\em Riemannian metric}, is a smooth
function such that, for each $p \in \M$, $g_p$ is a bilinear form $T_p
\M \times T_p \M \to \Re$ which is symmetric and positive definite,
i.e., for each $v_p, w_p \in T_p \M$, $g_p(v_p,w_p) = g_p(w_p,v_p)$,
and the function $v_p \mapsto g_p(v_p,v_p)$ is positive
definite. Thus, $g_p$ is an inner product on $T_p\M$ which varies
smoothly with $p$. In the language of tensors, $g$ is a type $(0,2)$
symmetric and positive definite tensor field on $M$.  A Riemannian metric
induces two maps. The {\em flat} map is the function $T \M \to T^\star
\M$, $X \mapsto X^\flat$, defined as $X^\flat(Y) = g(X,Y)$ for all $Y
\in T\M$. The {\em sharp} map is the function $T^\star \M \to T\M$,
$\omega \mapsto \omega^\sharp$, defined uniquely through the identity
$\omega(X) = g(\omega^\sharp,X)$ for all $X \in T\M$.  Given a
function $f \in C^\infty(\M)$, $\grad f: \M \to T\M$ is the smooth
vector field defined as $\grad f := df^\sharp$.

An {\em affine connection} on $\M$ is a smooth function $\nabla :
\X(\M) \times \X(\M) \to \X(\M)$, $(X,Y) \mapsto \nabla_X Y$
satisfying the following properties:
\begin{equation}\label{eq:affine_connection_defn}
\begin{aligned}
& \nabla_{fX + g Y} Z = f \nabla_X Z + g \nabla_Y Z, \\
& \nabla_X(Y+Z) = \nabla_X Y + \nabla_X Z, \\
& \nabla_X (f Y) = f \nabla_X Y + X(f) Y,
\end{aligned}
\end{equation}
for any $f,g \in C^\infty(\M)$ and $X,Y, Z \in \X(\M)$. The vector
field $\nabla_X Y$ is called the {\em covariant derivative} of $Y$ in
the direction of $X$. The covariant derivative of vector fields
induces a covariant derivative of tensor fields, also denoted
$\nabla$, enjoying the properties listed in~\cite[Lemma
  4.6]{book:598491}. Among them, we mention the following. If $F$ is a
tensor field on $\M$ of type $(0,s)$, and $X\in \X(\M)$, then $\nabla_X
F$ is a type $(0,s)$ tensor field satisfying the following identity
\begin{equation}\label{eq:covariant_derivative:tensor}
(\nabla_X F)(Y_1,\ldots,Y_s) = X(F(Y_1,\ldots Y_s)) -\sum_{j=1}^s
F(Y_1,\ldots,\nabla_X Y_j,\ldots,Y_s),
\end{equation}
for all $Y_1,\ldots,Y_s \in \X(\M)$. The {\em total covariant
  derivative} of a type $(0,s)$ tensor field $F$ is the type $(0,s+1)$
tensor field $\nabla F$ given by
\begin{equation}\label{eq:total_covariant_derivative}
\nabla F (X,Y_1,\ldots,Y_s) = \nabla_X F(Y_1,\ldots,Y_s), \ \text{ for
  all } X,Y_1,\ldots,Y_s \in \X(\M).
\end{equation}
The connection $\nabla$ is {\em symmetric} (or
torsionless) if
\[
(\forall X,Y \in \X(\M)) \ \nabla_X Y - \nabla_Y X = [X,Y].
\]
The connection $\nabla$ is {\em compatible with $g$} if
\begin{equation}\label{eq:compatibility}
(\forall X,Y,Z \in \X(\M)) \ X( g(Y,Z) ) = g(\nabla_X Y,Z) +
  g(Y,\nabla_X Z),
\end{equation}
or, in terms of the total covariant derivative,
\begin{equation}\label{eq:compatibility:total_der}
\nabla g =0.
\end{equation}
The Fundamental Lemma of Riemannian Geometry (e.g.,~\cite{book:598491})
states that there is a unique affine connection $\nabla$ on a
Riemannian manifold $(\M,g)$ with the property of being symmetric and
compatible with $g$. This connection is called the {\em Riemannian
  connection} or the {\em Levi-Civita connection} of $g$.

The covariant derivative $\nabla_X Y$ may be viewed as a
differentiation of the vector field $Y$ along $X$. If $\gamma(t)$ is a
smooth curve on $\M$ and $Y \in \X(\M)$, the restriction of $Y$ to
$\gamma(t)$, $V(t) :=Y(\gamma(t))$, is a vector field along
$\gamma$. The derivative $D_t V:=\nabla_{\dot \gamma} Y$ is called the
{\em covariant derivative of $V$ along $\gamma$}. Although the
definition just given relies on expressing $V$ as the restriction to
$\gamma$ of a vector field $Y$ on $\M$, $D_t V$ does not depend on the
values of $Y$ outside of $\gamma(t)$, in that {\em any} smooth
extension of $V$ outside of $\gamma$ gives the same value of $D_t
V$. The geometric intuition of the notion of covariant derivative is
as follows (see, e.g.,~\cite[Chapter VII, Section 2]{Boo86}). If $\M$
is an embedded submanifold of $\Re^n$ with Riemannian metric induced
from an inner product on $\Re^n$, $D_t V$ is the orthogonal projection
of the time derivative of $Y(\gamma(t))$ onto the tangent space
$T_{\gamma(t)} \M$. Thus, roughly speaking, $D_t V$ measures how much
the vector field $V(t)$ turns as seen from the point of view of
$\M$. In essence, covariant derivatives embody the notion of
acceleration of a curve. More precisely, the {\em acceleration} of a
curve $\gamma$ on $\M$ is the vector field $D_t \dot \gamma$ along
$\gamma$, and $\gamma$ is called a {\em geodesic} of $\nabla$ if its
acceleration is zero, i.e., $\nabla_{\dot \gamma} \dot \gamma(t) = D_t
\dot \gamma(t) \equiv 0$. We remark that this definition of geodesic
curve does not require $\nabla$ to be a Riemannian connection.

{\bf Coordinate representation of the covariant derivative.} In
coordinates, covariant derivatives associated with a Riemannian
connection take on a familiar form, which we now review. Consider a
coordinate chart $(U, (x^1,\ldots,x^n))$ on $\M$ and the associated
local frame $\{\de{1},\ldots,\de{n}\}$ for $T\M$. Let $X_i:=\de{i}$.
Given an affine connection $\nabla$ on $\M$, not necessarily
Riemannian, the {\em Christoffel symbols} of $\nabla$ associated with
the local frame $\{X_1,\ldots, X_n\}$ are the $n^3$ functions
$\Gamma_{ij}^k$ in $C^\infty(U)$ that are coefficients of the
expansion of $\nabla_{X_i} X_j$ in the local frame
$\{X_1,\ldots,X_n\}$, i.e.,
\[
\nabla_{X_i} X_j = \sum_{k=1}^n \Gamma_{ij}^k X_k.
\]
One can show that if $\nabla$ is symmetric, then $\Gamma_{ij}^k =
\Gamma_{ji}^k$.  Whether or not $\nabla$ is symmetric, if $Y,Z \in
\X(\M)$, expanding $Y = \sum_i y_i X_i$, $Z = \sum_i z_i X_i$, with
$y_i,z_i \in C^\infty(U)$, the covariant derivative $\nabla_X Y$
can be computed through the formula
\begin{equation}\label{eq:connection_computation}
\nabla_Y Z = \sum_k \left( Y(z_k) + \sum_{i,j} \Gamma_{ij}^k y_i z_j
\right) X_k,
\end{equation}
where $Y(z_k)$ is the Lie derivative of $z_k$ along $Y$. The
acceleration of a smooth curve $\gamma: I \to \M$, $I \subset \Re$, can
be computed as follows. Letting $\gamma^i(t):=x^i(\gamma(t))$ denote
the $i$-th component of the coordinate representation of $\gamma$, we
have
\begin{equation}\label{eq:acceleration}
\nabla_{\dot \gamma} \dot \gamma = \sum_k \left( \ddot \gamma^k +
\sum_{i,j} \Gamma_{ij}^k \dot \gamma^i \dot \gamma^j \right) X_k.
\end{equation}
We see that, in local coordinates, geodesics are solutions of the
system of second-order differential equations
\[
\ddot \gamma^k = - \sum_{i,j} \Gamma_{ij}^k \dot \gamma^i \dot
\gamma^j, \ k=1,\ldots, n.
\]
If $\nabla$ is Riemannian, the Christoffel symbols may be computed
using a matrix representation of the metric $g$. Using again the local
frame $\{X_1,\ldots, X_n\}$, let
$g_{ij}(p):=g(\der{i}{p},\der{j}{p})$, and let $g^{kl}$ be $(k,l)$-th
element of the inverse of the matrix $(g_{ij})$. Then,
\begin{equation}\label{eq:Christoffel_Riemannian}
\Gamma_{ij}^k = \frac{1}{2} \sum_l g^{kl} \big( X_i(g_{jl}) +
X_j(g_{il}) - X_l(g_{ij}) \big).
\end{equation}
{\bf Lagrangian control systems on manifolds.} Having reviewed basic
notions of Riemannian geometry, we are ready to present the class of
mechanical systems considered in this paper. The definitions below are
adapted from~\cite{geomControlBullo}.
\begin{definition}[Lagrangian system] \label{defn:Lagrangian_system}
A {\em Lagrangian system} is a triple $(\Q,g,P)$, where $(\Q,g)$ is an
$n$-dimensional Riemannian manifold called the {\em configuration
  manifold}, and $P : \Q \to \Re$ is a smooth function called the {\em
  potential function}. The triple $(\Q,g,P)$ is also called a {\em
  Lagrangian structure}.  A smooth curve $q : I \to \Q$, where $I$ is
an open interval in $\Re$, is a {\em base integral curve} of the
Lagrangian system if
\begin{equation}\label{eq:Lagrangian_system}
\nabla_{\dot q(t)} \dot q(t) = - \grad P(q(t)),
\end{equation}
for all $t \in I$.
\end{definition}
For each $q_0 \in \Q$ and each $v_{q_0} \in T_{q_0} \Q$, there exists
a unique maximal base integral curve $q(t)$ such that $q(0)=q_0$ and
$\dot q(0)=v_{q_0}$, where maximality is defined in the same way as
for integral curves of vector fields.  We will call $q(t)$ the {\em
  maximal base integral curve of~\eqref{eq:Lagrangian_system} with
  initial condition $(q_0,v_{q_0})$}.

Base integral curves have the property of being extremizers of the
action functional $\int_I L(q(t),\dot q(t)) dt$, $I \subset \Re$,
where $L : T\Q \to \Re$ is the Lagrangian function defined as
\begin{equation}\label{eq:Lagrangian}
L (q,\dot q):=\frac 1 2 g_q (\dot q,\dot q) - P(q).
\end{equation}

In Lagrangian systems, controls appear by way of forces. On Riemannian
manifolds, forces are modelled as one-forms because, under coordinate
changes, they transform like the components of one-forms
(see~\cite{geomControlBullo}). Thus, a force on $\Q$ is a one-form $F
\in \Omega(\Q)$. The corresponding vector field $F^\sharp \in \X(\Q)$
can be thought of as the portion of the acceleration due to $F$. With
this in mind, we proceed to the definition of a Lagrangian control
system.
\begin{definition}[Lagrangian control system] \label{defn:Lagrangian_control_system}
A {\em Lagrangian control system} is a quadruple $(\Q,g,P,\sF)$, where
$(\Q,g,P)$ is a Lagrangian system and $\sF = \{F^1,\ldots, F^m\}$,
$F^i \in \Omega(\Q)$, are called the {\em control forces}.  A curve $q
:I \to \Q$, where $I$ is an open interval in $\Re$, is a {\em base
  integral curve} of the Lagrangian control system if there exist $m$
smooth functions $\tau_i : I \to \Re$ such that
\begin{equation}\label{eq:Lagrangian_control_system}
\nabla_{\dot q(t)} \dot q (t) = -\grad P(q(t)) + \sum_{i=1}^m
(F^i)^\sharp_{q(t)} \tau_i(t),
\end{equation}
for all $t \in I$.
\end{definition}
Let $\tau^\star = (\tau_1^\star,\ldots,\tau_m^\star): T\Q \to \Re^m$
be a smooth map.  Then, for each $q_0 \in \Q$ and each $v_{q_0}
\in T_{q_0} \Q$, there exists a unique maximal solution $q: I \to \Q$
of~\eqref{eq:Lagrangian_control_system} with $\tau_i(t)=
\tau_i^\star(q(t),\dot q(t))$, $i=1,\ldots,m$. We call it the {\em
  maximal base integral curve of~\eqref{eq:Lagrangian_control_system}
  with feedback $\tau=\tau^\star(q,\dot q)$ and initial condition
  $(q_0,v_{q_0})$}.

It is shown in~\cite{geomControlBullo} that the equations of motion of
the Lagrangian system in~\eqref{eq:Lagrangian_system} can be
equivalently expressed as a smooth vector field on $T \Q$. Similarly,
the equations of motion of the Lagrangian control system
in~\eqref{eq:Lagrangian_control_system} have an equivalent
representation as a smooth control-affine system with state space
$T\Q$. In order to define such
  control-affine system, we need the notion of {\em vertical lift} at
  a point $v_q \in T\Q$ (\cite{geomControlBullo}), the linear map
  $\vlft_{v_q} : T_q \Q \to T_{v_q} T\Q$ defined by
\begin{equation}\label{eq:vlft}
\vlft_{v_q} (X_q) = \frac{d}{dt}\Big|_{t=0} (v_q + t X_q).
\end{equation}
The vertical lift of a vector field $X \in \X(\Q)$ is the vector field
$\vlft(X)$ on $T\Q$ defined by $\vlft(X)(v_q) =
\vlft_{v_q}(X(q))$. The vertical lift of a distribution $\D$ on $\Q$
is the distribution $\vlft(\D)$ on $T\Q$ defined by $\vlft(\D)(v_q) =
\vlft_{v_q}(\D(q))$. 

The control-affine system on $T\Q$ associated with the Lagrangian control
system~\eqref{eq:Lagrangian_control_system} is given by
\begin{equation}\label{eq:control_affine}
\dot X = S(X) - \vlft(\grad P)(X) + \sum_{i=1}^m \tau_i \vlft\big(
(F^i)^\sharp \big) (X).
\end{equation}
In the above, the vector field $S \in \X(TQ)$ is the {\em geodesic
  spray} associated with the metric $g$, and it has the property that
the integral curves of $S$ project to geodesics of the metric $g$ on
$\Q$ via the canonical projection $\pi : T\Q \to \Q$.  The control
system~\eqref{eq:control_affine} is equivalent
to~\eqref{eq:Lagrangian_control_system} in the following sense.  All
maximal base integral curves of~\eqref{eq:Lagrangian_control_system}
are projections under $\pi: T\Q \to \Q$ of maximal integral curves
of~\eqref{eq:control_affine}. Vice versa, the projection of any
maximal integral curve of~\eqref{eq:control_affine} is a maximal base
integral curve of~\eqref{eq:Lagrangian_control_system}.

In coordinates, equations~\eqref{eq:Lagrangian_system}
and~\eqref{eq:Lagrangian_control_system} take on the familiar form of
the Euler-Lagrange equations (e.g.,~\cite[Chapter
  3]{ArnoldClassicalMech}). More precisely, let $q : I \to \Q$ be a
base integral curve of a Lagrangian system $(\Q,g,P)$. Given a chart
$(U,\phi)$ for $\Q$, let $x = \phi \circ q$ be the coordinate
representation of $q$, and let $\hat L(x,\dot
x):=L(\phi^{-1}(x),(d\phi^{-1})_x \dot x)$, where $L$ is the
Lagrangian function defined in~\eqref{eq:Lagrangian}. Then,
$x=(x^1,\ldots,x^n): I \to \Re^n$ satisfies the Euler-Lagrange
equations
\[
\frac{d}{dt} \frac{\partial \hat L}{\partial \dot x^i} -
\frac{\partial \hat L}{\partial x^i} = 0, \ i =1,\ldots, n.
\]
Vice versa, if $x: I \to \Re^n$ satisfies the above Euler-Lagrange
equations, then $q: I \to \Q$, $q:=\phi^{-1} \circ x$ is a base
integral curve of $(\Q,g,P)$. An analogous property holds for
Lagrangian control systems $(\Q,g,P,\sF)$, where now $q: I \to \Q$ is
a base integral curve of $(\Q,g,P,\{F^i\}_{i=1,\ldots,m})$ if, and
only if, $x=\phi \circ q$ is a solution of the forced Euler-Lagrange
equations
\begin{equation}\label{eq:EulerLagrange_forced}
\frac{d}{dt} \frac{\partial \hat L}{\partial \dot x^j} -
\frac{\partial \hat L}{\partial x^j} = \sum_{i} B_{ij} (x) \tau_i, \ j
=1,\ldots, n.
\end{equation}
In the above $B_{ij}(x) := F^j(\der{i}{\phi^{-1}(x)})$ is the $i$-th
coefficient of the expansion of $F^j$ in the local frame
$\{\de{1},\ldots,$ $\de{n}\}$ induced by the chart.  Let $D(x)$ be the
matrix with components $D_{ij}(x) = g_{ij}(\phi^{-1}(x))$, and set
$C(x,\dot x)=D(x) \tilde C(x,\dot x)$, where $\tilde C_{kj}(x,\dot x)
= \sum_{i} \Gamma_{ij}^k ( \phi^{-1}(x)) \dot x_i$.  Let $\P:=P
\circ \phi^{-1}$, and let $B(x)$ be the matrix with elements
$B_{ij}(x)$. Then the Euler-Lagrange
equations~\eqref{eq:EulerLagrange_forced} take on the familiar form
\begin{equation}\label{eq:mechanical_control_system}
D(x) \ddot x + C(x,\dot x)\dot x+\nabla_x \P(x) = B(x) \tau,
\end{equation}
where $\tau$ is the vector whose components are the control inputs
$\tau_i$ in~\eqref{eq:EulerLagrange_forced}.

\section{Review of Virtual Holonomic Constraints }\label{sec:background_vhc}

The configuration manifold $\Q$ of a robot whose joints are revolute
or prismatic is a generalized cylinder. In other words, an element of
$\Q$ may be represented as an $n$-tuple $(q_1,\ldots,q_n)$, where each
$q_i$ is either in $\Re$ if the $i$-th joint is prismatic, or in
$\Se^1$ if the $i$-th joint is revolute. In this case, the Lagrangian
control system~\eqref{eq:Lagrangian_control_system} admits a global
coordinate representation of the form~\eqref{eq:mechanical_control_system}:
\begin{equation}\label{eq:EulerLagrange_forced2}
D(q) \ddot q + C(q,\dot q) \dot q + \nabla_q \P(q) = B(q) \tau.
\end{equation}
In this section, we review basic facts concerning VHCs for the class
of mechanical control systems~\eqref{eq:EulerLagrange_forced2}. The
rest of this paper will be devoted to the generalization of these
results to the coordinate-free setting. We assume, throughout, that
the matrix $B$ has full-rank $m$.
\begin{definition}[Virtual holonomic constraint in coordinates,~\cite{6286994}] 
\label{defn:VHC:coordinates}
A {\em virtual holonomic constraint of order $k$} for
system~\eqref{eq:EulerLagrange_forced2} is a relation $h(q)=0$, where
$h:\Q \to \Re^k$ is a smooth function such that $0$ is a regular value
of $h$ and the set
\begin{equation}\label{eq:constraint_manifold_coordinates}
\Gamma = \{(q,\dot q) \in T\Q : h(q)=0, \ dh_q \dot q =0\}
\end{equation}
is controlled invariant for~\eqref{eq:EulerLagrange_forced2}. The set
$\Gamma$ is called the {\em constraint manifold} associated with the
VHC $h(q)=0$.
\end{definition}
Requiring $\Gamma$ to be controlled invariant means requiring that
there exists a smooth feedback $\tau = \tau^\star(q,\dot q)$ rendering
$\Gamma$ positively invariant for the closed-loop system (see,
e.g.,~\cite[Definition 11.1]{NijSch90}). If we let $\C :=h^{-1}(0)$,
then the hypothesis that $\rank(dh_q)=k$ for all $q\in h^{-1}(0)$
implies that $\C$ is a closed embedded submanifold of $\Q$. Moreover,
the constraint manifold $\Gamma$
in~\eqref{eq:constraint_manifold_coordinates} is the tangent bundle of
$\C$, $\Gamma = T\C$.

In the context of nonlinear control, the constraint manifold
associated with a VHC $h(q)=0$ is the zero dynamics manifold of
system~\eqref{eq:EulerLagrange_forced2} with output $e= h(q)$
(see~\cite{Isi95}). A special case of interest is when this output
function yields a well-defined relative degree, as in the next
definition.
\begin{definition}[Regular VHC in coordinates,~\cite{6286994}]
\label{defn:regularVHC:coordinates}
A relation $h(q)=0$ is a {\em regular VHC of order $k$} for
system~\eqref{eq:EulerLagrange_forced2} if $h: \Q \to \Re^k$ is
smooth, and system~\eqref{eq:EulerLagrange_forced2} with output
$e=h(q)$ has vector relative degree\footnote{This means~\cite{Isi95}
  that the control input $\tau$ appears nonsingularly in the second
  time derivative, $\ddot e$, of $e$ along solutions
  of~\eqref{eq:EulerLagrange_forced2}.}  $\{2,\ldots,2\}$ everywhere
on the constraint manifold $\Gamma$
in~\eqref{eq:constraint_manifold_coordinates}, i.e., $\rank ( dh_q
D^{-1}(q)B(q) ) =k$ for all $ q \in h^{-1}(0)$.
\end{definition}

Since the matrix $dh_q D^{-1}(q)B(q)$ has dimension $k \times m$, for
a regular VHC it must hold that the number of constraints, $k$, be
less than or equal to the number of controls, $m$.  The next result
provides a geometric interpretation of the regularity condition.
\begin{prop}[\cite{JanMagCon12}]
A relation $h: \Q \to \Re^k$ is a regular VHC for
system~\eqref{eq:EulerLagrange_forced2} if, and only if, letting $\C =
h^{-1}(0)$,
\begin{equation}\label{eq:transversality:coordinates}
(\forall q \in \C) \ T_q \C + \image(D^{-1}(q) B(q)) = T_q \Q.
\end{equation}
\end{prop}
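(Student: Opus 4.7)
The plan is to reduce the statement to a short linear-algebra fact by identifying the two vector spaces involved. Throughout, fix $q\in\C$ and set $L:=dh_q: T_q\Q\to\Re^k$ and $M:=D^{-1}(q)B(q):\Re^m\to T_q\Q$, viewed as linear maps.

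First I would record two elementary observations. Because $0$ is a regular value of $h$, the map $L$ is surjective; moreover, by the standard characterization of the tangent space of a regular level set, $T_q\C=\ker L$. Next, by Definition~\ref{defn:regularVHC:coordinates}, the VHC $h(q)=0$ is regular exactly when $\rank(L\circ M)=k$, i.e., when $L\circ M:\Re^m\to\Re^k$ is surjective. Thus the proposition reduces to the purely linear claim
\[
L\circ M \text{ is surjective} \iff \ker L+\image M = T_q\Q.
\]

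To establish this equivalence I would argue directly. For the forward direction, assume $L\circ M$ is surjective and let $v\in T_q\Q$. Then $Lv\in\Re^k = L(\image M)$, so there exists $w\in\image M$ with $Lw=Lv$; writing $v=(v-w)+w$ exhibits $v$ as a sum of an element of $\ker L$ and an element of $\image M$. Conversely, if $\ker L+\image M=T_q\Q$, then applying $L$ and using that $L$ kills $\ker L$ gives $L(\image M)=L(T_q\Q)=\Re^k$ (the last equality by surjectivity of $L$), so $L\circ M$ is surjective.

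Combining these two observations with $T_q\C=\ker(dh_q)$ and $\image(D^{-1}(q)B(q))=\image M$ yields the stated equivalence. There is no real obstacle here: the only mild subtlety is keeping track of which map goes where, and remembering that surjectivity of $L$ (hence the validity of the identification $T_q\C=\ker L$) comes from the regular-value assumption already built into Definition~\ref{defn:VHC:coordinates}.
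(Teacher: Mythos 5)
Your proof is correct: the reduction to the linear-algebra equivalence ``$dh_q\,D^{-1}(q)B(q)$ surjective $\iff \ker dh_q + \image\big(D^{-1}(q)B(q)\big)=T_q\Q$'' (using $T_q\C=\ker dh_q$, which is legitimate because the rank condition, or the standing regular-value hypothesis, makes $dh_q$ surjective on $\C$) is exactly the content of the statement, and both implications are argued soundly. The paper itself states this proposition without proof, citing~\cite{JanMagCon12}, and your argument is the standard one that the cited reference uses; nothing is missing.
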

In light of the proposition above, the VHC $h(q)=0$ is regular if, for
each $q \in \C$, the mechanical system can produce control
accelerations $D^{-1}B \tau$ in any direction transversal to $T_q
\C$. In the special case $k=m$, when the number of constrains is equal
to the number of controls, the subspace sum
in~\eqref{eq:transversality:coordinates} becomes direct.

Regular VHCs are important in two respects. First, since the output
$e=h(q)$ yields a well-defined vector relative degree on the
constraint manifold $\Gamma$, one may use input-output linearization
to asymptotically stabilize $\Gamma$. For this, some technical
assumptions on $h$ and its differential are required,
see~\cite{6286994}.  Second, when $k=m$, there is a unique smooth
feedback $\tau^\star: \Gamma \to \Re^m$ rendering $\Gamma$ invariant,
resulting in constrained dynamics on $\Gamma$ described by an
autonomous differential equation. This is stated in the next
proposition for the case\footnote{The more general case $k \leq
  m \leq n$ is addressed in~\cite{JanMagCon12}.}  $m=n-1$. In what follows,
we let $\bperp:\Q \to \Re^{1 \times n} \backslash \{0\}$ be a smooth
left annihilator of $B$ of rank one everywhere on $h^{-1}(0)$.
\begin{prop}[\cite{6286994}]\label{prop:reduced_dynamics:coordinates}
Let $m=n-1$, and let $h(q)=0$ be a regular VHC of order $n-1$ for
system~\eqref{eq:EulerLagrange_forced2}. Then
  there exists a unique smooth feedback $\tau^\star: \Gamma \to \Re^m$
  rendering $\Gamma$ in~\eqref{eq:constraint_manifold_coordinates}
  invariant, and the resulting closed-loop dynamics on $\Gamma$ are
  given as follows.  Let $\varphi:\Theta \to \Q$ be a regular
  parametrization of the curve $\C=h^{-1}(0)$, where $\Theta = \Se^1$
  if $\C$ is a Jordan curve and $\Theta = \Re$ otherwise, and let
  $(q,\dot q)= (\varphi(\th), \varphi'(\th) \dot \th)$. The
  closed-loop dynamics on $\Gamma$ are given by
\begin{equation}\label{eq:constrained_dynamics}
\ddot \th = \Psi_1(\th) + \Psi_2(\th) \dot{\th}^2,
\end{equation}
where $(\th,\dot \th) \in \Theta \times \Re$ and
\begin{equation}\label{eq:Psi_functions}
\begin{aligned}
& \Psi_1(\th) = -\frac{\bperp\nabla_q \P}{\bperp D
    \varphi'}\bigg|_{q=\varphi(\th)}, \\
& \Psi_2(\th) = -\frac{\bperp D \varphi'' + \sum_{i=1}^n (\bperp D)_i
    \varphi'{}\trans \Gamma^i \varphi'}{\bperp D
    \varphi'}\Bigg|_{q=\varphi(\th)},
\end{aligned}
\end{equation}
where $(\Gamma^i)_{jk} =\Gamma^i_{jk}$ is the matrix containing the
Christoffel symbols of the metric $g_q(v_q,w_q) = v_q\trans D(q) w_q$.
\end{prop}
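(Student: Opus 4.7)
The plan is to parametrize $\Gamma = T\C$ by $(\th,\dot\th)\in\Theta\times\Re$ via $(q,\dot q)=(\varphi(\th),\varphi'(\th)\dot\th)$, substitute into the coordinate form~\eqref{eq:EulerLagrange_forced2}, and eliminate $\tau$ by left-multiplying with the annihilator $\bperp$. First, I would differentiate once more to obtain $\ddot q = \varphi'(\th)\ddot\th + \varphi''(\th)\dot\th^2$ along any constraint-compatible curve. Next, using the factorization $C=D\tilde C$ with $\tilde C_{lj}=\sum_i \Gamma_{ij}^l \dot q_i$ recalled in the preliminaries, I would specialize $C\dot q$ at $\dot q = \varphi'\dot\th$ and verify by a short index calculation that its $k$-th component equals $\sum_l D_{kl}(\varphi'{}\trans\Gamma^l\varphi')\,\dot\th^2$.

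Plugging these expressions into~\eqref{eq:EulerLagrange_forced2} reduces the equations of motion on $\Gamma$ to a single $n$-vector identity of the form
\[
D\varphi'\,\ddot\th \;+\; \bigl[D\varphi'' + D\,(\varphi'{}\trans\Gamma^l\varphi')_{l=1,\ldots,n}\bigr]\dot\th^2 \;+\; \nabla_q\P \;=\; B\tau.
\]
Left-multiplication by $\bperp$ then annihilates the control term and produces a scalar equation whose coefficient of $\ddot\th$ is $\bperp D\varphi'$. The crucial step is to argue that this scalar is nonzero on $\C$: the regularity hypothesis, in the transversality form given by the preceding proposition, combined with the dimensional match $k=m=n-1$ and $\dim T_q\C = 1$, upgrades the sum $T_q\C + \image(D^{-1}B)$ to a direct sum equal to $T_q\Q$. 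Equivalently, $D\varphi'\notin\image(B)$ at every $q\in\C$, so $\bperp D\varphi'\neq 0$ there, and dividing yields exactly~\eqref{eq:constrained_dynamics}--\eqref{eq:Psi_functions}.

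For uniqueness of the invariance feedback: once $\ddot\th$ is prescribed by~\eqref{eq:constrained_dynamics}, the parametrization fixes $\ddot q$ on $\Gamma$, and~\eqref{eq:EulerLagrange_forced2} determines $B\tau^\star$ uniquely; the full column rank of $B$ then pins $\tau^\star$ down, and smooth dependence on $(\th,\dot\th)$ follows from smoothness of $\varphi$, $D$, $\Gamma_{ij}^l$, $\P$, and $\bperp$. The main obstacle is organizational rather than conceptual: keeping the index placement in $\Gamma_{ij}^l$ straight so that the column vector $(\varphi'{}\trans\Gamma^l\varphi')_l$ pairs correctly with $\bperp D$ under left multiplication, and translating the abstract transversality condition into the single scalar non-vanishing statement $\bperp D\varphi'\neq 0$.
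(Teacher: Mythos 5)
Your computation is correct, and it is essentially the standard derivation: the paper itself does not prove this proposition (it imports it from~\cite{6286994}), but it re-derives the same formulas geometrically in Section~\ref{sec:constrained_dynamics:coordinates}, where the projection $\hat\sigma_x = d\varphi_\th(\bperp D\, d\varphi_\th)^{-1}\bperp D$ specializes, for $n-m=1$, to exactly your ``divide by $\bperp D\varphi'$'' step, and~\eqref{eq:constrained_dynamics:coordinates:one_dim} reproduces~\eqref{eq:constrained_dynamics}--\eqref{eq:Psi_functions}. Your index bookkeeping for $C\dot q$ matches the paper's convention $C=D\tilde C$, and your argument that regularity with $k=m=n-1$ forces $T_q\C\oplus\image(D^{-1}B)=T_q\Q$, hence $D\varphi'\notin\image B=\Ker\bperp$ and $\bperp D\varphi'\neq 0$, is the right way to get nondegeneracy of the $\ddot\th$ coefficient. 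The only point worth tightening is the existence half of the feedback claim: your construction produces the unique $\tau^\star(\th,\dot\th)$ for which the closed-loop acceleration is of the form $\varphi'\ddot\th+\varphi''\dot\th^2$, i.e., for which the closed-loop vector field on $T\Q$ is \emph{tangent} to $\Gamma=T\C$; one should then invoke the standard fact that tangency of a smooth vector field to a closed embedded submanifold implies invariance (the paper cites \cite[Theorem 2.1]{Clarke} for this in the proof of Proposition~\ref{prop:regVHC_is_VHC}). With that sentence added, the argument is complete.
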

The dynamics in~\eqref{eq:constrained_dynamics} are called the {\em
  constrained (or reduced) dynamics} associated with the VHC
$h(q)=0$. The next result, taken from~\cite{MohMagCon17},
characterizes when the constrained
dynamics~\eqref{eq:constrained_dynamics} possess a Lagrangian
structure.
\begin{thm}[\cite{MohMagCon17}]\label{thm:ILP:1dim}
Define a map $\pi : \Re \to \Theta$ as
\[
\pi(x) = \begin{cases} x & \text{ if } \ \Theta = \Re \\ x \Mod 2 \pi &
  \text{ if } \ \Theta = \Se^1.
\end{cases}
\]
Define smooth functions $\hMC,\hPC: \Re \to \Re$ as,
\[
\begin{aligned} 
\hMC(x)&:=\exp \left( -2 \int_0^x \Psi_2 \circ \pi(z) d z \right),
\\
\hPC(x)&:= -\int_0^x \big( \Psi_1 \circ \pi(z) \big) \hMC(z) d z,
\end{aligned}
\]
and (generally multi-valued) functions $\MC,\PC : \Theta \rightrightarrows \Re$ as
\[
\MC:=\hMC \circ \pi^{-1},  \ \PC:=\hPC \circ \pi^{-1}.
\]
Let
\begin{equation}\label{eq:ILP:Lagrangian}
L(\th,\dot \th) = \frac 1 2  \MC(\th) \dot
\th^2 - \PC (\th).
\end{equation}
Then the following statements are true.
\begin{enumerate}[(a)]
\item If $\Theta = \Re$, then~\eqref{eq:constrained_dynamics} is a
  Lagrangian system with Lagrangian $L$ in~\eqref{eq:ILP:Lagrangian}.

\item If $\Theta = \Se^1$, then~\eqref{eq:constrained_dynamics} is
  Lagrangian if, and only if, $\hMC$ and $\hPC$ are $2 \pi$-periodic,
  in which case $\MC, \PC$ in~\eqref{eq:ILP:Lagrangian} are
  single-valued and smooth, and $L$ in~\eqref{eq:ILP:Lagrangian} is
  the Lagrangian function of~\eqref{eq:constrained_dynamics}.
\end{enumerate}
\end{thm}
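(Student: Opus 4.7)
My approach is to dispatch part (a) by a direct Euler--Lagrange calculation, deduce the sufficiency direction of part (b) as an immediate consequence, and establish necessity in (b) via a conserved--energy argument after lifting to the universal cover $\pi : \Re \to \Se^1$.

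For part (a), the key observation is that the definitions of $\hMC$ and $\hPC$ are engineered so that $\hMC'(x) = -2\Psi_2(\pi(x))\hMC(x)$ and $\hPC'(x) = -\Psi_1(\pi(x))\hMC(x)$, with $\hMC > 0$. When $\Theta=\Re$ these identities descend to $\MC'=-2\Psi_2\MC$ and $\PC'=-\Psi_1\MC$, and writing out the Euler--Lagrange equation of $L=\tfrac12\MC(\th)\dot\th^2-\PC(\th)$ gives
\[
\MC(\th)\ddot\th+\tfrac12\MC'(\th)\dot\th^2+\PC'(\th)=0;
\]
substituting the two identities and dividing by $\MC>0$ yields exactly~\eqref{eq:constrained_dynamics}. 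For the sufficiency half of (b), note that~\eqref{eq:ILP:Lagrangian} defines a smooth function on $T\Se^1$ precisely when $\MC$ and $\PC$ descend to single--valued smooth functions on $\Se^1$ via $\pi$, i.e., when $\hMC$ and $\hPC$ are $2\pi$-periodic; under this hypothesis the same local Euler--Lagrange computation applies in any chart of $\Se^1$ and shows $L$ is a Lagrangian for~\eqref{eq:constrained_dynamics}.

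For necessity in (b), suppose there exists \emph{some} smooth Lagrangian $L$ on $T\Se^1$ whose Euler--Lagrange equation is~\eqref{eq:constrained_dynamics}. Its pullback $\tilde L(x,\dot x):=L(\pi(x),\dot x)$ is a $2\pi$-periodic Lagrangian on $\Re\times\Re$ for the lifted ODE, and its energy $\tilde E(x,\dot x):=\dot x\,\tilde L_{\dot x}-\tilde L$ is $2\pi$-periodic in $x$ and constant along lifted trajectories. By part (a) the lifted dynamics also admit the natural energy $E_0(x,\dot x)=\tfrac12\hMC(x)\dot x^2+\hPC(x)$, so $\tilde E$ and $E_0$ are two smooth first integrals of a 2D autonomous flow. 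On the open set where the flow has no equilibria the two first integrals are pointwise functionally dependent, so locally $\tilde E=\phi\circ E_0$ for a smooth $\phi$. Restricting to an open piece where $\phi$ is injective (such a piece exists because $\tilde E$ is non-constant) and invoking $2\pi$-periodicity of $\tilde E$ gives $E_0(x+2\pi,\dot x)=E_0(x,\dot x)$ on an open set of $(x,\dot x)$; matching the coefficient of $\dot x^2$ forces $\hMC$ to be $2\pi$-periodic, and the remaining $\dot x$-independent part then forces $\hPC$ to be $2\pi$-periodic.

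\textbf{Main obstacle.} The delicate step is necessity in (b): showing that the \emph{existence of some} Lagrangian structure forces $\hMC$ and $\hPC$ to be periodic. A naive uniqueness claim fails because the 1-DOF inverse problem of the calculus of variations admits many variational multipliers and hence many non--mechanical Lagrangians. The energy--conservation argument sketched above sidesteps this by working directly with the first integral $E=\dot\th L_{\dot\th}-L$, which is smooth and conserved for any regular autonomous Lagrangian, and exploiting that 2D autonomous flows cannot have functionally independent first integrals on generic open sets. The remaining technical subtleties --- identifying an open subset on which $\phi$ is locally injective and handling the exceptional loci consisting of equilibria and critical points of $\phi$ --- are where the bulk of the careful bookkeeping lies, and are treated in detail in~\cite{MohMagCon17}.
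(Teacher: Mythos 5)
Your part (a) and the sufficiency half of (b) are correct: they amount to the direct Euler--Lagrange computation $\MC\ddot\th+\tfrac12\MC'\dot\th^2+\PC'=0$ together with the identities $\hMC'=-2(\Psi_2\circ\pi)\hMC$ and $\hPC'=-(\Psi_1\circ\pi)\hMC$. Note, though, that this is not how the paper itself establishes the statement: the theorem is quoted from~\cite{MohMagCon17}, and Section~\ref{sec:one_dim_manifolds} re-derives it as a corollary of Theorem~\ref{thm:ILP:general} and Proposition~\ref{prop:holonomy:flat}, where periodicity of $\hMC$ appears as invariance of a positive definite form under the parallel transport map $\PP_{\gamma_0}(v)=\exp\big(\int_0^{2\pi}\Psi_2\circ\pi\big)v$ of the (flat) induced connection around the generator of $\pi_1(\Se^1)$, and periodicity of $\hPC$ as exactness of the one-form $-\Psi_1\MC\,d\th$.

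The necessity half of (b) is where there is a genuine gap, and it is also where you are attacking a harder problem than the theorem poses. In this paper ``Lagrangian'' means generated by a Lagrangian structure $(\C,g_\C,P_\C)$ in the sense of Definition~\ref{defn:Lagrangian_system}, i.e.\ a mechanical Lagrangian $\tfrac12 m(\th)\dot\th^2-p(\th)$ with $m>0$ smooth and single-valued on $\Se^1$. Matching its Euler--Lagrange equation with~\eqref{eq:constrained_dynamics} forces $m'/m=-2\Psi_2$ and $p'=-\Psi_1 m$, so $m\circ\pi$ and $p\circ\pi$ agree with $\hMC$ and $\hPC$ up to constants, and single-valuedness on $\Se^1$ gives $2\pi$-periodicity at once; the variational-multiplier ambiguity you worry about never enters. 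Your more general argument, for an arbitrary smooth Lagrangian on $T\Se^1$, breaks at the step ``invoking $2\pi$-periodicity of $\tilde E$ gives $E_0(x+2\pi,\dot x)=E_0(x,\dot x)$'': the relation $\tilde E=\phi\circ E_0$ is only \emph{local}, with a possibly different $\phi$ on each patch, so periodicity of $\tilde E$ only yields $\phi_2(E_0(x+2\pi,\dot x))=\phi_1(E_0(x,\dot x))$ for two unrelated local diffeomorphisms of intervals, from which nothing about $E_0$ follows. In fact, writing $d\tilde E=\lambda\, dE_0$ and differentiating the periodicity of $\tilde E$ produces identities that hold for \emph{every} $\hMC,\hPC$, so no pointwise or infinitesimal functional-dependence argument can detect the obstruction, which is a monodromy. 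A repair in the spirit of your idea is to follow a single high-energy trajectory from $x=0$ to $x=2\pi$ (these exist since $E_0$ is conserved and dominates any potential barrier): conservation and periodicity of $\tilde E$, plus strict monotonicity of $\tilde E(0,\cdot)$ in $\dot x>0$ coming from regularity of $L$, force the return velocity to equal the initial one, and then conservation of $E_0$ for all large initial velocities gives $\hMC(2\pi)=1$ and $\hPC(2\pi)=\hPC(0)$, hence periodicity. As written, however, your necessity proof does not go through.
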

As will become apparent in the development that follows, the results
reviewed in this section are a special case of the general theory
developed in this paper.

\section{Coordinate-free formulation of virtual holonomic constraints}\label{sec:coordinate_free}
In this section we reformulate and generalize the theory of
Section~\ref{sec:background_vhc} in a coordinate-free context.  We
consider throughout a Lagrangian control system
$(\Q,g,P,\sF)$ with equations of motion
\begin{equation}\label{eq:Lagrangian_control_system2}
\nabla_{\dot q} \dot q = -\grad P(q) + \sum_{i=1}^m (F^i)^\sharp_{q}
\tau_i.
\end{equation}
We assume that the one-forms $\sF=\{F^1,\ldots,F^m\}$ are independent, and
define
the {\em acceleration distribution}
\begin{equation}\label{eq:acceleration_distribution}
\D_A = \spn\{(F^1)^\sharp,\ldots, (F^m)^\sharp\}.
\end{equation}
We recall that base integral curves
of~\eqref{eq:Lagrangian_control_system2} are projections onto $\Q$ of
integral curves of the control affine system
\begin{equation}\label{eq:control_affine2}
\dot X = S(X) - \vlft(\grad P)(X) + \sum_{i=1}^m \tau_i \vlft\big(
(F^i)^\sharp \big) (X)
\end{equation}
via the canonical projection map $\pi : T \Q \to \Q$. Note that
\[
\vlft(\D_A) = \spn\big\{ \vlft( (F^1)^\sharp),\ldots, \vlft(
(F^m)^\sharp\big)\}.
\]

\subsection{VHC definitions and relationships}

%
%
%
\begin{definition}[Controlled invariant submanifold]\label{defn:controlled_invariance}
Let $\C$ be a closed embedded submanifold of $\Q$. The tangent bundle
$T\C$ is {\em controlled invariant}
for~\eqref{eq:Lagrangian_control_system2} if there exists a smooth
feedback $\tau^\star = (\tau_1^\star,\dots,\tau_m^\star): T\C\to
\Re^m$ such that for each $q_0 \in \C$ and each $v_{q_0} \in T_{q_0}
\C$, the maximal base integral curve $q : I \to \Q$
of~\eqref{eq:Lagrangian_control_system} with feedback $\tau =
\tau^\star(q,\dot q)$ and initial condition $(q_0,v_{q_0})$ satisfies
$q(t) \in \C$ for all $t \in I$.
\end{definition}
In reference to the control-affine system~\eqref{eq:control_affine2},
the definition above can be rephrased as the requirement that there
exists a smooth feedback rendering $T \C$ an invariant set for the
closed-loop system, which is the standard concept of controlled
invariance of submanifolds used in control theory (see,
e.g.,~\cite{Isi95}).

\begin{definition}[Virtual holonomic constraint] \label{defn:vhc}
A {\em virtual holonomic constraint of order $k$} for the Lagrangian
control system~\eqref{eq:Lagrangian_control_system} is a closed
embedded submanifold $\C$ of $\Q$ of codimension $k$ such that $T\C$
is controlled invariant for~\eqref{eq:Lagrangian_control_system}. The
set $T \C$ is called the {\em constraint manifold}.
\end{definition}
\begin{definition}[Regular VHC of order $m$] \label{defn:regularVHC}
A closed embedded submanifold $\C$ of $\Q$ is a {\em regular VHC of
  order $m$} for the Lagrangian control
system~\eqref{eq:Lagrangian_control_system} if $\C$ has codimension
$m$ and
\begin{equation}\label{eq:transversality_condition}
(\forall q \in \C) \ T_q \C \oplus \D_A(q) = T_q \Q,
\end{equation}
where $\D_A$ is the acceleration distribution defined
in~\eqref{eq:acceleration_distribution}.
\end{definition}
\begin{figure}[htb]
\psfrag{T}{$T_q \Q$}
\psfrag{U}{$T_q \C$}
\psfrag{D}{$\D_A(q)$}
\psfrag{q}{$q$}
\psfrag{C}{$\C$}
\psfrag{Q}{$\Q$}
\centerline{\includegraphics[width=.6\textwidth]{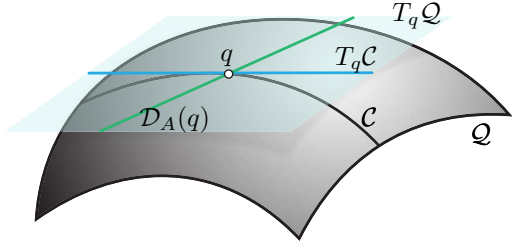}}
\caption{Transversality condition in the definition of regular VHC.}
\label{fig:tranversality}
\end{figure}
The transversality condition~\eqref{eq:transversality_condition},
illustrated in Figure~\ref{fig:tranversality},
generalizes~\eqref{eq:transversality:coordinates} in the case when the
number of constraints, $k$, is equal to the number of controls,
$m$. 

A regular VHC is a VHC in the sense of Definition~\ref{defn:vhc}, as
the next result shows.
\begin{prop}\label{prop:regVHC_is_VHC} 
If a closed embedded submanifold $\C$ of $\Q$ is a regular VHC of
order $m$ for system~\eqref{eq:Lagrangian_control_system2}, then $\C$
is also a VHC in the sense of Definition~\ref{defn:vhc}, and the
smooth feedback $\tau^\star : T\C \to \Re^m$ rendering $T\C$ invariant
is unique.
\end{prop}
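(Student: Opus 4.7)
I would argue by reducing controlled invariance of $T\C$ to a pointwise linear-algebra condition on $\tau$. Fix $q_0\in\C$. Because $\C$ is a closed embedded submanifold of codimension $m$, there exist an open neighbourhood $U$ of $q_0$ in $\Q$ and a submersion $h:U\to\Re^m$ with $\C\cap U=h^{-1}(0)$; locally $T\C\cap TU$ is cut out by $h(q)=0$ and $dh_q(\dot q)=0$. Along a base integral curve $q(t)$ of~\eqref{eq:Lagrangian_control_system2} starting at $(q_0,v_{q_0})\in T\C$, both quantities vanish at $t=0$, and the first stays zero as long as $\tfrac{d^2}{dt^2}h(q(t))=0$. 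Viewing $dh$ as a $(0,1)$-tensor field and applying~\eqref{eq:covariant_derivative:tensor},
\[
\tfrac{d^2}{dt^2}h(q(t))\;=\;(\nabla_{\dot q}dh)_q(\dot q)\;+\;dh_q(\nabla_{\dot q}\dot q),
\]
so substituting~\eqref{eq:Lagrangian_control_system2} gives a linear equation in $\tau\in\Re^m$,
\[
\sum_{i=1}^m \tau_i\,dh_q\bigl((F^i)^\sharp_q\bigr)\;=\;dh_q(\grad P_q)\;-\;(\nabla_{\dot q}dh)_q(\dot q).
\]

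The $m\times m$ coefficient matrix on the left is the matrix representation of the linear map $dh_q|_{\D_A(q)}:\D_A(q)\to\Re^m$. Since $\ker dh_q=T_q\C$ and $\dim T_q\C+\dim \D_A(q)=(n-m)+m=n$, this map is a bijection if and only if $T_q\C\cap\D_A(q)=\{0\}$, which by the same dimension count is equivalent to the transversality condition~\eqref{eq:transversality_condition}. Under the regularity hypothesis the matrix is therefore invertible and the equation has a unique solution $\tau^\star(q,\dot q)$ depending smoothly on $(q,\dot q)\in T\C\cap TU$. Pointwise uniqueness also shows that the definition is independent of the choice of local defining submersion: any other such $\tilde h$ changes the linear equation only by a pointwise invertible matrix factor, leaving the solution unchanged. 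The local feedbacks therefore glue into a global smooth map $\tau^\star:T\C\to\Re^m$.

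It remains to verify that with $\tau=\tau^\star$ base integral curves starting in $T\C$ stay in $T\C$, and that no other feedback accomplishes this. By construction, the control-affine vector field~\eqref{eq:control_affine2} evaluated with $\tau=\tau^\star(q,\dot q)$ is tangent to $T\C$ at every point of $T\C$, hence restricts to a smooth vector field $\tilde X$ on $T\C$. Its unique maximal integral curve through $(q_0,v_{q_0})\in T\C$ stays in $T\C$ and projects under $\pi:T\Q\to\Q$ to the desired base integral curve, establishing controlled invariance and hence that $\C$ is a VHC in the sense of Definition~\ref{defn:vhc}; uniqueness of $\tau^\star$ is immediate from the pointwise uniqueness already derived. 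I expect the main obstacle to be the invertibility step: the argument hinges on translating~\eqref{eq:transversality_condition} into nonsingularity of the matrix $[dh_q((F^i)^\sharp_q)]_i$, for which the coordinate-free formula via $\nabla_{\dot q}dh$ above is essential to keep the whole argument intrinsic.
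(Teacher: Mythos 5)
Your proposal is correct, but it reaches the conclusion by a different route than the paper. The paper works entirely upstairs on $T\Q$: its key step is Lemma~\ref{lem:controlled_invariance}, proved in the appendix via the geodesic spray and vertical lifts, which shows $(d\pi_{X_q})^{-1}(T_q\C)=T_{X_q}T\C\oplus \vlft(\D_A)(X_q)$ by a dimension count driven by~\eqref{eq:transversality_condition}; the unique $\tau^\star$ then drops out of this direct-sum decomposition, and smoothness is argued abstractly via a least-squares formula for a consistent full-rank linear system. You instead work downstairs with a local defining submersion $h$ and differentiate the output twice, turning invariance into invertibility of the decoupling matrix $\bigl[dh_q((F^i)^\sharp_q)\bigr]_i$ — essentially the relative-degree computation the paper itself carries out right after the proposition (cf.~\eqref{eq:eddot}), promoted to a proof. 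Your identification of that matrix with $dh_q|_{\D_A(q)}$ and the equivalence of its nonsingularity with~\eqref{eq:transversality_condition} is exactly right, and your route has the advantage of producing $\tau^\star$ by an explicit formula whose smoothness is manifest, while avoiding the appendix lemma entirely. The price is that your argument is local and needs the gluing step: you must check (as you do, a bit tersely) that the solution of the linear system is independent of the choice of $h$ — which uses that on $T\C$ one has $h=0$ and $dh_q(\dot q)=0$, so replacing $h$ by $\tilde h$ with $d\tilde h_q=A(q)\,dh_q$ on $\C$ multiplies both sides of the equation by the invertible factor $A(q)$. The final passage from pointwise tangency of the closed-loop vector field to invariance of the closed embedded submanifold $T\C$ is the same in both arguments (the paper cites a theorem of Clarke et al.\ for it). Both proofs are sound; yours is more computational and control-theoretic, the paper's more intrinsic.
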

We need the following lemma, whose proof is in the appendix.

\begin{lem}\label{lem:controlled_invariance}
Consider the Lagrangian control
system~\eqref{eq:Lagrangian_control_system2} and its associated
control-affine system~\eqref{eq:control_affine2}. If $\C$ is a regular
VHC, then for each $X_q \in T\C$,
\begin{equation}\label{eq:controlled_invariance}
S(X_q) - \vlft(\grad P)(X_q) \in T_{X_q} T\C \oplus \vlft(\D_A)(X_q).
\end{equation}
\end{lem}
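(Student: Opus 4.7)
The key is to exploit the canonical vertical structure of $T\Q$ together with the transversality condition~\eqref{eq:transversality_condition}. At each point $X_q \in T\Q$, the kernel of $d\pi_{X_q} : T_{X_q} T\Q \to T_q \Q$ coincides with the image of $\vlft_{X_q} : T_q \Q \to T_{X_q} T\Q$. I first want to verify that the sum $T_{X_q} T\C + \vlft(\D_A)(X_q)$ is in fact direct at every $X_q \in T\C$: since $\vlft(\D_A)(X_q)$ is purely vertical, any vector in the intersection must be vertical in $T_{X_q} T\C$, hence of the form $\vlft_{X_q}(Z)$ with $Z \in T_q \C \cap \D_A(q)$, and the transversality condition~\eqref{eq:transversality_condition} forces $Z=0$. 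This also shows that $\vlft_{X_q}(T_q \C) \subset T_{X_q} T\C$.

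Next I will handle the geodesic spray. The defining property of $S$ is that, for any smooth curve $q(t)$ in $\Q$ with $\dot q(0) = X_q$, the difference $\frac{d}{dt}\dot q(t)|_{t=0} - S(X_q)$ is vertical and equals precisely $\vlft_{X_q}\!\big(\nabla_{\dot q}\dot q(0)\big)$; this is readily checked in coordinates using~\eqref{eq:acceleration} and the fact that the integral curves of $S$ project to geodesics. Since $X_q \in T_q\C$ and $\C$ is an embedded submanifold, I can choose the curve $q(t)$ to lie entirely in $\C$; then $\dot q(t)$ is a curve in $T\C$ and therefore $\frac{d}{dt}\dot q(t)|_{t=0} \in T_{X_q} T\C$. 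Consequently,
\[
S(X_q) \in T_{X_q} T\C + \vlft_{X_q}(T_q\Q).
\]

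Finally, I apply the transversality decomposition $T_q\Q = T_q\C \oplus \D_A(q)$ to the vertical remainder. Writing $\nabla_{\dot q}\dot q(0) = Y_1 + Y_2$ with $Y_1 \in T_q\C$ and $Y_2 \in \D_A(q)$, the lift $\vlft_{X_q}(Y_1)$ lies in $T_{X_q} T\C$ (by the first paragraph) while $\vlft_{X_q}(Y_2) \in \vlft(\D_A)(X_q)$, giving $S(X_q) \in T_{X_q} T\C \oplus \vlft(\D_A)(X_q)$. The same decomposition applied to $\grad P(q) \in T_q\Q$ immediately places $\vlft(\grad P)(X_q)$ in the same sum. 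Combining the two yields~\eqref{eq:controlled_invariance}.

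\textbf{Main obstacle.} The only delicate point is the verification that $\vlft_{X_q}(T_q\C) \subset T_{X_q} T\C$ and the identity relating $S(X_q)$ to the lift of the covariant acceleration; both are standard but should be written carefully to justify the passage from the ambient bundle $T\Q$ to the embedded submanifold $T\C$. Once these identifications are in place, the direct-sum conclusion is a short linear-algebraic consequence of the transversality hypothesis.
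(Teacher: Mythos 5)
Your proof is correct, and it takes a route that differs in a meaningful way from the paper's. The paper proves the lemma by first noting that $d\pi_{X_q}\big(S(X_q)-\vlft(\grad P)(X_q)\big)=X_q\in T_q\C$, so the drift lies in $(d\pi_{X_q})^{-1}(T_q\C)$, and then establishing the subspace identity $(d\pi_{X_q})^{-1}(T_q\C)=T_{X_q}T\C\oplus\vlft(\D_A)(X_q)$ by a dimension count ($2n-m=(2n-2m)+m$) combined with the two inclusions and the trivial-intersection argument. You instead decompose $S(X_q)$ explicitly: using the coordinate identity relating the spray to the covariant acceleration, $S(X_q)=\frac{d}{dt}\dot q(t)\big|_{t=0}-\vlft_{X_q}\big(\nabla_{\dot q}\dot q(0)\big)$ for a curve chosen inside $\C$, so the first term already lies in $T_{X_q}T\C$, and the vertical remainder is split via $T_q\Q=T_q\C\oplus\D_A(q)$ together with the inclusion $\vlft_{X_q}(T_q\C)\subset T_{X_q}T\C$. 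The two arguments share the verticality of lifts and the use of transversality to get a direct (rather than merely spanning) sum, but yours avoids the dimension count and the characterization of the preimage $(d\pi_{X_q})^{-1}(T_q\C)$, at the price of invoking the finer spray--acceleration relation. A side benefit of your version is that it exhibits the $\vlft(\D_A)$-component of the drift as the lift of the $\D_A$-component of $\nabla_{\dot q}\dot q+\grad P$, which essentially anticipates the formula for the unique invariance-rendering feedback $\tau^\star$ used later in the paper. The only steps you should write out in full are the two identifications you flag yourself --- that $T_{X_q}T\C\cap\Ker d\pi_{X_q}=\vlft_{X_q}(T_q\C)$ and the coordinate verification of the spray identity via~\eqref{eq:acceleration} --- but both are standard and your sketches of them are sound.
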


\begin{proof}[Proof of Proposition~\ref{prop:regVHC_is_VHC}] 
Suppose $\C$ is a regular VHC for the Lagrangian control
system~\eqref{eq:Lagrangian_control_system2}. We need to show that the
closed embedded submanifold $T\C \subset T\Q$ is controlled invariant
for the control-affine system~\eqref{eq:control_affine2}, and the
smooth feedback rendering it invariant is unique. By
Lemma~\ref{lem:controlled_invariance}, for each $X_q \in T \C$ we have
that
\begin{equation}\label{eq:coninv}
S(X_q) - \vlft(\grad P)(X_q) \in T_{X_q} T\C
  \oplus \spn\{\vlft\big( (F^i)^\sharp \big), i\in \{1,\ldots,m\}\},
\end{equation}
from which it follows that there is a unique vector $\tau^\star(X_q) =
(\tau_1^\star(X_q),\ldots,\tau_m^\star(X_q))$ such that
\begin{equation}\label{eq:tangency_vhc}
S(X_q) - \vlft(\grad P)(X_q) + \sum_{i=1}^m \tau_i^\star(X_q)
\vlft\big( (F^i)^\sharp \big) \in T_{X_q} T\C.
\end{equation}
The map $T\C \to \Re^m$, $X_q \to \tau^\star(X_q)$ is smooth because
in any set of local coordinates $\hat X$ on $T\C$, the
requirement~\eqref{eq:tangency_vhc} can be expressed as a matrix
equation of the form $ A(\hat X) \tau = b(\hat X)$, where,
by~\eqref{eq:coninv}, $\rank A = m$ and $b(\hat X) \in \image A(\hat
X)$. The unique solution $\tau(\hat X)$ of this equation is $\tau(\hat
X) = \big( A(\hat X)\trans A(\hat X) \big)^{-1} A(\hat X)\trans b(\hat
X)$, which is a smooth function.
In conclusion, there exists a unique smooth feedback $\tau^\star: T\C
\to \Re^m$ such that the closed-loop vector field given
by~\eqref{eq:control_affine} with $\tau_i = \tau_i^\star(X)$ is
tangent to $T \C$. By~\cite[Theorem 2.1]{Clarke},
$T\C$ is an invariant set for the closed-loop vector field, and thus
$\C$ is a VHC in the sense of Definition 4.2.
\end{proof}

Definition~\ref{defn:regularVHC} of regular VHCs is a coordinate-free
generalization of Definition~\ref{defn:regularVHC:coordinates} in the
following sense. When $\C$ is globally described by the zero level set
of a smooth submersion $h: \Q \to \Re^m$, then
$T\C = \{v_q \in T\Q: h(q)=0,\ dh_q v_q=0\}$, and
system~\eqref{eq:Lagrangian_control_system2} with output function $e =
h(q)$ has vector relative degree $\{2,\ldots,2\}$. Indeed,
differentiating each component of the output, $e_i = h_i(q)$, twice along the
base integral curves of~\eqref{eq:Lagrangian_control_system2} one can
show that
%
%
%
\begin{equation}\label{eq:eddot}
\ddot e_i = g(\nabla_{\dot q} \grad h_i, \dot q) - g(\grad h_i, \grad
P) + \sum_{j=1}^m b_{ij}(q) \tau_j, \ i =1,\ldots, k,
\end{equation}
where $b_{ij} = g(\grad h_i,(F^j)^\sharp)$. The transversality
condition~\eqref{eq:transversality_condition} in
Definition~\ref{defn:regularVHC} implies that the $m \times m$ matrix
with component $b_{ij}$ is invertible on $\C$, and thus
system~\eqref{eq:Lagrangian_control_system2} with output function $e =
h(q)$ has vector relative degree $\{2,\ldots,2\}$.

Just like in Section~\ref{sec:background_vhc}, the
expression~\eqref{eq:eddot} suggests a way to asymptotically
stabilize\footnote{Provided that certain technical assumptions hold
  for $h$ and $dh$ (see~\cite{6286994}).} the constraint manifold
$T\C$ using an input-output linearizing feedback 
\[
\tau_i^\star = \sum_j b^{ij}(q) \left[ -g(\nabla_{\dot q} \grad
  h_j,\dot q) + g(\grad h_j, \grad P) - K_{p,j} h_j - K_{d,j} (dh_j)_q
  \dot q\right],
\]
where $b^{ij}$ is the $(i,j)$-th element of the inverse of the matrix
$(b_{ij})$, and $K_{p,j}$, $K_{d,j}$ are positive design parameters.

The restriction of $\tau^\star$ above 
to $T\C$ is the unique feedback rendering $T\C$ invariant predicted
by Proposition~\ref{prop:regVHC_is_VHC}, and it is given by
\[
\tau_i^\star |_{T\C} = \sum_j b^{ij}(q) \left[ -g(\nabla_{\dot q} \grad
  h_j,\dot q) + g(\grad h_j, \grad P) \right].
\]
For base integral curves $q(t)$ in $\C$ it holds that $\dot q \in T_q
\C$. Moreover, since $\C = h^{-1}(0)$, $\grad h_i(q) \in T_q
\C^\perp$.  The Weingarten equation~\cite{book:598491} then gives
$g(\nabla_{\dot q} \grad h_j,\dot q) = -g(\grad h_j,\mathrm{I\!I}(\dot
q,\dot q))$, where $\mathrm{I\!I}$ is the second fundamental form of
$\C$. Thus the unique feedback rendering $T\C$ invariant is
\[
\tau_i^\star|_{T\C} = \sum_j b^{ij}(q) g(\grad h_j, \mathrm{I\!I}(\dot
q,\dot q) + \grad P).
\]

\subsection{Constrained dynamics}\label{sec:constrained_dynamics}
      
Our next objective is to characterize the constrained dynamics on
$T\C$, by which we mean the closed-loop dynamics resulting from the
application of the unique smooth feedback $\tau^\star: T \C \to \Re^m$
rendering $T \C$ invariant.  We would like a coordinate-free
generalization of Proposition~\ref{prop:reduced_dynamics:coordinates}
valid for any $m$, not just $m=n-1$.  As we now show, such dynamics
are described by a special affine connection on $\C$ induced by the
VHC. This so-called {\em induced connection} was originally developed
in the context of affine differential geometry (see~\cite[Chapter
  2]{nomizu1994affine}).  We adopt it in the context of regular VHCs.
 
Before giving a formal definition of the induced connection, we
present the basic idea behind it. If $\C$ is a regular VHC, the
transversality condition~\eqref{eq:transversality_condition} in
Definition~\ref{defn:regularVHC} states that, for each $q \in \C$, the
tangent space $T_q \Q$ is the direct sum of $T_q \C$ and $\D_A(q)$. We
may then define the projection $\sigma_q: T_q \Q \to T_q \C$ of the
vector space $T_q \Q$ onto $T_q \C$ along the subspace $\D_A(q)$.  The
map $\sigma_q$ is uniquely determined by the following properties:
\begin{enumerate}[(i)]
\item $\sigma_q^2 = \sigma_q$,
\item $\image \sigma_q = T_q \C$,
\item $\Ker \sigma_q = \D_A(q)$.
\end{enumerate}
Now consider the vector bundle map $\sigma: T\Q|_\C \to T\C, \ w_q
\mapsto \sigma_q(w_q)$, illustrated in Figure~\ref{fig:projection}.
\begin{figure}[htb]
\psfrag{T}{$T_q \Q$}
\psfrag{U}{$T_q \C$}
\psfrag{D}{$\D_A(q)$}
\psfrag{q}{$q$}
\psfrag{C}{$\C$}
\psfrag{Q}{$\Q$}
\psfrag{w}{$w_q$}
\psfrag{s}{$\sigma_q(w_q)$}
\centerline{\includegraphics[width=.7\textwidth]{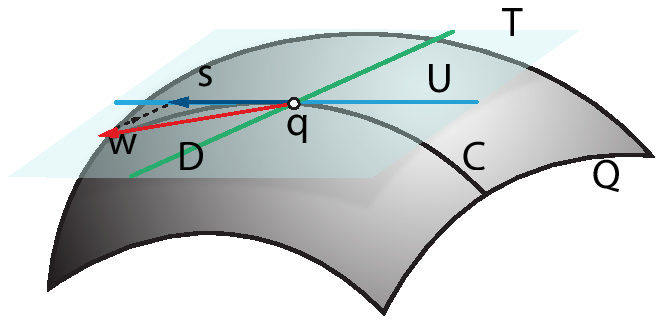}}
\caption{The vector bundle map $\sigma: T \Q|_\C \to T \C$.}
\label{fig:projection}
\end{figure}

Since the acceleration distribution $\D_A(q)$ is smooth, so is
$\sigma$. Using $\sigma$, we define a new connection on $\C$ as
follows. Given two vector fields $X,Y \in \X(\C)$, $\nabla_X Y$ is
generally not a vector field on $\C$, but its projection
$\sigma(\nabla_X Y)$ is, and the next theorem shows that this
operation identifies an affine connection on $\C$.

Before presenting the theorem, we need to justify the notation
$\nabla_X Y$ for vector fields $X,Y \in \X(\C)$, since the affine
connection $\nabla$ accepts vector fields on $\Q$.  Consider arbitrary
smooth extensions\footnote{These exist by~\cite[Problem
    8.15]{lee2012introduction}.}  $\tilde X, \tilde Y$ of $X,Y$ on a
neighbourhood of $\C$ in $\Q$ such that $\tilde X|_\C = X$ and $\tilde
Y|_\C=Y$.  Given any $p \in \C$, by~\cite[Exercise 4.7,
  p.58]{book:598491}, the value of $\nabla_{\tilde X} \tilde Y(p)$
depends only on $\tilde X_p$ (and thus $X_p$) and the value of $\tilde
Y$ along any smooth curve $\gamma : (-\varepsilon,\varepsilon) \to \Q$
such that $\gamma(0)=p$ and $\dot \gamma(0) = X_p$. Since $X_p \in T_p
\C$, we may pick a curve $\gamma$ contained in $\C$, so that the value
of $\tilde Y$ along $\gamma$ coincides with that of $Y$. Therefore, on
$\C$ the function $\nabla_{\tilde X} \tilde Y$ is uniquely determined
by $X,Y$.  These considerations justify the slight abuse of notation
$\nabla_X Y$ for vector fields $X,Y \in \X(\C)$.
\begin{thm}[\cite{nomizu1994affine}]\label{thm:induced_connection}
Let $\C$ be a regular VHC of order $m$ for the Lagrangian control
system~\eqref{eq:Lagrangian_control_system2}, and define the map
$\nablaC: \X(\C) \times \X(\C) \to \X(\C)$ as
\begin{equation}\label{eq:induced_connection}
\nablaC_X Y :=\sigma \big( \nabla_{X} Y \big),
\end{equation}
where $\nabla$ is the Riemannian connection of $(\Q,g)$. The map
$\nablaC$ is a symmetric affine connection on $\C$.
\end{thm}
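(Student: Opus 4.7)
The plan is to verify the three defining properties of an affine connection from~\eqref{eq:affine_connection_defn} together with symmetry, by pushing everything through the known connection $\nabla$ on $(\Q,g)$ and exploiting the algebraic properties of the fibrewise linear projection $\sigma$. The paragraph preceding the theorem already explains that $\nabla_X Y$ is well-defined as an element of $\X(\Q)|_\C$ when $X,Y\in\X(\C)$, so $\sigma(\nabla_X Y)$ makes sense as a section of $T\C$; smoothness of $\nablaC_X Y$ then follows from smoothness of $\sigma$ and of $\nabla_X Y$ along $\C$. Two observations are used throughout: (i) $\sigma_q:T_q\Q\to T_q\C$ is linear for each $q$, and (ii) if $W\in\X(\C)$, then $\sigma(W)=W$ since $W(q)\in T_q\C=\image\sigma_q$ is fixed by $\sigma_q$ (property (i) of $\sigma_q$: $\sigma_q^2=\sigma_q$ implies $\sigma_q$ acts as the identity on $T_q\C$).

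For the first two properties of~\eqref{eq:affine_connection_defn}, let $X,Y,Z\in\X(\C)$ and $f,g\in C^\infty(\C)$. Extend $f,g$ smoothly to a neighbourhood of $\C$ in $\Q$ (which is possible since $\C$ is an embedded submanifold). Then $\nabla_{fX+gY}Z=f\nabla_X Z+g\nabla_Y Z$ and $\nabla_X(Y+Z)=\nabla_X Y+\nabla_X Z$ on $\C$ by the corresponding properties of $\nabla$, and applying the fibrewise-linear $\sigma$ preserves these identities verbatim. For the Leibniz rule,
\[
\nablaC_X(fY)=\sigma(\nabla_X(fY))=\sigma\bigl(f\,\nabla_X Y+X(f)\,Y\bigr)=f\,\sigma(\nabla_X Y)+X(f)\,\sigma(Y)=f\,\nablaC_X Y+X(f)\,Y,
\]
where the last equality uses observation (ii) applied to $Y\in\X(\C)$, and $X(f)$ is interpreted as the Lie derivative of $f$ along $X$ on $\C$, which agrees with the ambient one because $X$ is tangent to $\C$.

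For symmetry, use that $\nabla$ is the Levi--Civita connection and hence torsionless on $\Q$: for $X,Y\in\X(\C)$, $\nabla_X Y-\nabla_Y X=[X,Y]$ on $\C$. Since $X,Y$ are tangent to $\C$, their bracket $[X,Y]$ is again tangent to $\C$, so observation (ii) yields $\sigma([X,Y])=[X,Y]$, and therefore
\[
\nablaC_X Y-\nablaC_Y X=\sigma(\nabla_X Y-\nabla_Y X)=\sigma([X,Y])=[X,Y].
\]

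The only genuinely delicate points, rather than obstacles, are the well-definedness issues addressed before the theorem's statement: namely, that $\nabla_X Y|_\C$ depends only on $X,Y\in\X(\C)$ and not on the chosen ambient extensions $\tilde X,\tilde Y$, and that differentiation of $f\in C^\infty(\C)$ along $X\in\X(\C)$ is unambiguous. Once these are invoked, every step reduces to linearity of $\sigma$ on fibres, the identity $\sigma|_{T\C}=\mathrm{id}$, and transferring the $\nabla$-axioms from $\Q$ to $\C$ through $\sigma$; no further computation is needed.
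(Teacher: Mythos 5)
Your proof is correct. The paper itself omits the argument, noting only that it is mentioned in Nomizu--Sasaki and that "the straightforward proof is omitted"; your verification --- fibrewise linearity of $\sigma$, the identity $\sigma|_{T\C}=\mathrm{id}$ for the Leibniz rule, and tangency of $[X,Y]$ to $\C$ for symmetry --- is exactly the standard argument the authors had in mind.
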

The above result is mentioned in~\cite[Chapter 2,
  p. 28]{nomizu1994affine}.  The straightforward proof is omitted.

We call $\nablaC$ the {\em induced connection}, or the connection
induced by the regular VHC $\C$. While $\nablaC$ is symmetric, it is
generally not a Riemannian connection with respect to the induced
Riemannian metric on $\C$. This fact is discussed in the next section.
Now the main result of this section. In what follows, let $\iota : \C
\to \Q$ denote the inclusion map.
\begin{thm}\label{thm:reduced_dynamics}
If $\C$ is a regular VHC of order $m$ for the Lagrangian control
system~\eqref{eq:Lagrangian_control_system2}, then the constrained
dynamics on $T\C$ are described by the equation of motion
\begin{equation}\label{eq:constrained_dynamics:connection}
\nablaC_{\dot q} \dot q = -\sigma_q (\grad P(q)),
\end{equation}
in the following sense. If $q: I \to \Q$ is a maximal base integral
curve of~\eqref{eq:Lagrangian_control_system2} such that $q(I) \subset
\C$, then $q: I \to \C$ is a maximal base integral curve of
system~\eqref{eq:constrained_dynamics:connection}.  Vice versa, if $q:
I \to \C$ is a maximal base integral curve
of~\eqref{eq:constrained_dynamics:connection}, then $\iota \circ q$ is
a maximal base integral curve
of~\eqref{eq:Lagrangian_control_system2}.
\end{thm}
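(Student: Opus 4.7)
The plan is to leverage the direct sum decomposition $T_q\Q = T_q\C \oplus \D_A(q)$ guaranteed by regularity, together with the defining properties of the projector $\sigma_q$: it is the identity on $T_q\C$ and vanishes on $\D_A(q)$. The forward implication will follow by applying $\sigma_q$ to the equation of motion~\eqref{eq:Lagrangian_control_system2}, and the reverse implication by reconstructing the control inputs from the transverse complement $\D_A$.

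For the forward direction, let $q: I \to \Q$ be a maximal base integral curve of~\eqref{eq:Lagrangian_control_system2} with $q(I) \subset \C$. By Proposition~\ref{prop:regVHC_is_VHC}, the corresponding controls must be $\tau_i(t) = \tau_i^\star(q(t),\dot q(t))$, where $\tau^\star$ is the unique smooth feedback rendering $T\C$ invariant. Rearranging~\eqref{eq:Lagrangian_control_system2} yields
\begin{equation*}
\nabla_{\dot q} \dot q + \grad P(q) \;=\; \sum_{i=1}^m (F^i)^\sharp_q \,\tau_i^\star(q,\dot q) \;\in\; \D_A(q).
\end{equation*}
Applying $\sigma_q$ annihilates the right-hand side, while the left-hand side becomes $\sigma_q(\nabla_{\dot q}\dot q) + \sigma_q(\grad P(q)) = \nablaC_{\dot q}\dot q + \sigma_q(\grad P(q))$ by the definition of the induced connection. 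This gives~\eqref{eq:constrained_dynamics:connection}.

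For the reverse direction, let $q: I \to \C$ satisfy~\eqref{eq:constrained_dynamics:connection} and set $Z(t) := \nabla_{\dot q}\dot q + \grad P(q(t)) \in T_{q(t)}\Q$. By hypothesis and the definition of $\nablaC$, $\sigma_{q(t)}Z(t) = 0$, hence $Z(t) \in \ker \sigma_{q(t)} = \D_A(q(t))$. Since $\{(F^1)^\sharp,\ldots,(F^m)^\sharp\}$ is a frame for $\D_A$, there exist unique smooth $\tau_i: I \to \Re$ with $Z(t) = \sum_i (F^i)^\sharp_{q(t)}\tau_i(t)$; smoothness is automatic because $\tau_i(t) = \tau_i^\star(q(t),\dot q(t))$ with $\tau^\star$ smooth. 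Thus $\iota \circ q$ satisfies~\eqref{eq:Lagrangian_control_system2} with these controls.

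The delicate remaining point, which I expect to be the main obstacle, is matching the two notions of maximality, since "base integral curve" is defined on different manifolds for the two systems. I would argue as follows: under the feedback $\tau^\star$, the closed-loop vector field on $T\Q$ in~\eqref{eq:control_affine2} is tangent to the embedded submanifold $T\C$ by Lemma~\ref{lem:controlled_invariance}, and its restriction to $T\C$ is precisely the geodesic spray of $\nablaC$ perturbed by $-\vlft(\sigma(\grad P))$, i.e., the first-order system on $T\C$ associated with~\eqref{eq:constrained_dynamics:connection}. Uniqueness of maximal integral curves for smooth vector fields on embedded submanifolds then forces the maximal domains to coincide, completing the correspondence in both directions.
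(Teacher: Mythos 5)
Your proposal is correct and follows essentially the same route as the paper: both directions hinge on applying the projector $\sigma_q$ to the equation of motion (using $\sigma_q|_{\D_A(q)}=0$ for the forward implication and $\ker\sigma_q=\D_A(q)$ together with the uniqueness of the invariance-rendering feedback $\tau^\star$ for the converse). The only divergence is the maximality step, where you identify the restriction of the closed-loop vector field on $T\Q$ to the invariant embedded submanifold $T\C$ with the second-order system of~\eqref{eq:constrained_dynamics:connection}, while the paper argues by contradiction via extension of a non-maximal curve; both are routine and valid.
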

\begin{proof}
Let $q: I \to \Q$ be a base integral curve
of~\eqref{eq:Lagrangian_control_system2} such that $q(I) \subset
\C$. Then
\[
\nabla_{\dot q} \dot q +\grad P(q) - \sum_{i=1}^m (F^i)^\sharp
\tau_i^\star(q) =0,
\]
where $\tau_i^\star(q)$ is the $i$-th component of the unique feedback
$\tau^\star: T \C \to \Re^m$ rendering $T\C$ invariant (see
Proposition~\ref{prop:regVHC_is_VHC}).  Using $\sigma_q$ to project
both sides of the above identity onto $T_q \C$ and using the fact that
$\sigma_q((F^i)^\sharp)=0$, we get
\[
\sigma_q (\nabla_{\dot q} \dot q +\grad P(q))=0.
\]
By Theorem~\ref{thm:induced_connection} we get
\[
\nablaC_{\dot q} \dot q + \sigma_q (\grad P(q)) =0,
\]
which proves that $q : I \to \C$ is a base integral curve
of~\eqref{eq:constrained_dynamics:connection}.

Now let $q :I \to \C$ be a base integral curve
of~\eqref{eq:constrained_dynamics:connection}. Then,
\[
\nablaC_{\dot q} \dot q + \sigma_q (\grad P(q)) =0,
\]
or
\[
\sigma_q (\nabla_{\dot q} \dot q + \grad P(q) ) =0.
\]
Since $\Ker \sigma_q = \D_A (q)$, we have
\[
\nabla_{\dot q} \dot q + \grad P(q) \in \D_A(q),
\]
from which it follows that, for each $t \in I$, there exists $\bar
\tau(q(t))=(\bar \tau_1(q(t)),\ldots,\bar \tau_m(q(t))) \in \Re^m$
such that
\[
\nabla_{\dot q(t)} \dot q(t) + \grad P(q(t)) = \sum_{i=1}^m
(F^i)^\sharp(q(t)) \tau_i(q(t)).
\]
By the uniqueness of the feedback $\tau^\star$ rendering $T \C$
invariant (see Proposition~\ref{prop:regVHC_is_VHC}), it must hold
that $\bar \tau = \tau^\star$, a smooth feedback. This proves that
$\iota(q)$ is an integral curve
of~\eqref{eq:Lagrangian_control_system2}.

We now prove maximality. Suppose, by way of contradiction, that $q : I
\to \Q$ a maximal base integral curve
of~\eqref{eq:Lagrangian_control_system2} such that $q(I) \subset \C$
but the corresponding base integral curve $q : I \to \C$
of~\eqref{eq:constrained_dynamics:connection} is not maximal. Let
$\tilde q : \tilde I \to \C$, $\tilde I \supset I$, be the unique
maximal base integral curve
of~\eqref{eq:constrained_dynamics:connection} such that $\tilde q|_I =
q$. Then $\iota (\tilde q)$ is a base integral curve
of~\eqref{eq:Lagrangian_control_system2} with a larger interval of
existence than $q$, which contradicts the maximality of $q$. In an
analogous way one shows that if $q : I \to \C$ is a maximal base
integral curve of~\eqref{eq:constrained_dynamics:connection} then
$\iota(q)$ is maximal for~\eqref{eq:Lagrangian_control_system2}.
\end{proof}

\subsection{Constrained dynamics in coordinates}\label{sec:constrained_dynamics:coordinates}

We now characterize the constrained dynamics in coordinates.  Pick a
coordinate chart for $\Q$, $(U,\phi)$, with $\phi: U \to \hat U
\subset \Re^n$ and $\C \cap U \neq \emptyset$.  Letting $x = \phi(q) =
(x^1(q),\ldots,x^n(q))$, the equations of
motion~\eqref{eq:Lagrangian_control_system2} in $x$ coordinates read
as (cf.~\eqref{eq:mechanical_control_system}),
\[
D(x) \ddot x + C(x,\dot x) \dot x + \nabla_x \P(x) = B(x) \tau.
\]
{\bf Tangent space of $\C$.} The chart domain $U$ can always be chosen
small enough  that the local representation of the constraint
manifold, $\hat \C = \phi(\C \cap U)$, is the image of a
diffeomorphism $\varphi: W \subset \Re^{n-m} \to \hat \C$,
$\th=(\th^1,\ldots,\th^{n-m}) \mapsto \varphi(\th)$.  Using this
parametrization, we have $T_{\varphi(\th)} \hat \C = \image(d
\varphi_\th)$. Thus, letting
\[
V^i(x) :=d \varphi_{\varphi^{-1}(x)} (\partial / \partial \th^i) =
\partial_{\th^i} \varphi(\varphi^{-1}(x)), \ i=1,\ldots, n-m,
\]
we have
\[
T \hat \C = \spn\{V^1,\ldots, V^{n-m}\}.
\]
This construction is depicted in Figure~\ref{fig:constrained_dynamics:coordinates}.
\begin{figure}[htb]
\psfrag{s}{$s$}
\psfrag{d}{$\partial/\partial s^i$}
\psfrag{v}{$V^i(x)$}
\psfrag{x}{$x$}
\psfrag{C}{$\C$}
\psfrag{Q}{$\Q$}
\psfrag{U}{$U$}
\psfrag{V}{$\hat U \subset \Re^n$}
\psfrag{D}{$\hat \C$}
\psfrag{p}{$\varphi$}
\psfrag{P}{$d \varphi_s$}
\psfrag{W}{$W$}
\psfrag{R}{$\Re^{n-m}$}
\psfrag{T}{$\phi$}
\centerline{\includegraphics[width=.7\textwidth]{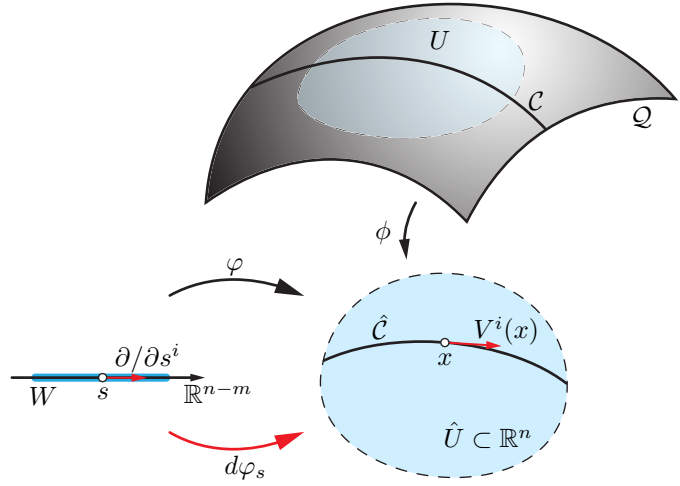}}
\caption{Coordinate systems used in
  Section~\ref{sec:constrained_dynamics:coordinates}.}
\label{fig:constrained_dynamics:coordinates}
\end{figure}

\medskip
\noindent
{\bf Projection map.}  In coordinates, we have
\[
d\phi_{\phi^{-1}(x)} \big( \D_A(\phi^{-1}(x)) \big) = \image
D^{-1}(x)B(x).
\]
Letting $\bperp$ be a full-rank left-annihilator of $B$, the
coordinate representation of the projection map $\sigma$ is the map
$\hat \sigma : T \hat U \to T\hat \C$ defined as
\begin{equation}
\label{eq:sigma:coordinates}
\hat \sigma_{x} (v_{x}) = d \varphi_\th \big( (\bperp D
d\varphi_\th)^{-1} \bperp D \big)\big|_{x =\varphi(\th)} ( v_{x} ).
\end{equation}
Indeed, one can readily verify that $\hat \sigma_x^2 = \hat \sigma_x$,
$\image(\sigma_x) = T_x \hat \C$, and $\Ker \sigma_x =
\image(D^{-1}(x) B(x))$. These properties imply that $\hat \sigma_x$
is the projection onto $T_x \hat \C$ along the subspace
$\image(D^{-1}(x)B(x))$, as required.

\medskip
\noindent
{\bf Induced connection $\nablaC$.}  The coordinate chart $(U,\phi)$
induces Christoffel symbols $\Gamma_{ij}^k$, $i,j,k\in\{1,\ldots,
n\}$, of the Riemannian connection $\nabla$. We now derive the
Christoffel symbols of the induced connection, defined through the
identity
\[
\hat \sigma(\nabla_{V^i} V^j) = \nablaC_{V^i} V^j = \sum_{k=1}^{n-m}
\GammaC_{ij}^k V^k,
\]
where $\GammaC_{ij}^k$ are the symbols we are looking for. Using the
definition of $V^i, V^j$, identity~\eqref{eq:connection_computation},
and the expression for $\hat \sigma$ in~\eqref{eq:sigma:coordinates},
one gets
\[
\GammaC_{ij}^k =\sum_{a=1}^{n} \partial^2_{\th^i \th^j} \varphi^a
+[(\bperp D d\varphi_\th)^{-1} \bperp D]_{ka} \sum_{b,c=1}^{n}
\Gamma_{bc}^a (\partial_{\th^i} \varphi^b) (\partial_{\th^j}
\varphi^c),
\]
where $\varphi^a$ denotes the $a$-th component of $\varphi$. Letting
$\Gamma^a(x)$ be the matrix with components $(\Gamma^a)_{bc}
=\Gamma^a_{bc}$, we may rewrite the Christoffel symbols of the induced
connection in the more economical form
\begin{equation}\label{eq:Christoffel:induced_connection}
\GammaC_{ij}^k =\sum_{a=1}^{n} [(\bperp D d\varphi_\th)^{-1} \bperp
  D]_{ka} \left( \partial^2_{\th^i \th^j} \varphi^a +
(\partial_{\th^i} \varphi)\trans \Gamma^a (\partial_{\th^j}
\varphi)\right) \Big|_{x=\varphi(\th)}, 
\end{equation}
$i,j,k \in \{1,\ldots, n-m\}$.  

\medskip
\noindent
{\bf Constrained dynamics.} The coordinate representation of $\grad
P(q)$ is
\[
\big[ D^{-1}(x) \nabla_x \P(x)\big]_{x = \varphi(s)}.
\]
Using~\eqref{eq:sigma:coordinates}, the projection $\sigma_q (\grad
P(q))$ in $s$-coordinates reads as
\[
\big[(\bperp D d\varphi_s)^{-1} \bperp\nabla_x \P\big]_{x =
  \varphi(s)}.
\]
Letting $e_k$ denote the $k$-th natural basis vector of $\Re^{n-m}$,
the coordinate representation of the constrained
dynamics~\eqref{eq:constrained_dynamics:connection} is
\begin{equation}\label{eq:constrained_dynamics:coordinates}
\ddot \th^k = - \sum_{ij} \GammaC_{ij}^k(\th) \dot \th^i \dot \th^j -
e_k\trans(\bperp D d \varphi_\th)^{-1} \bperp \nabla_x \P \Big|_{x
  = \varphi(\th)}, \ k=1,\ldots, n-m.
\end{equation}
In the special case when $\C$ is diffeomorphic to a generalized
cylinder, one may pick $\varphi$ to be a global diffeomorphism
$(\Se^1)^k \times (\Re)^{n-m-k} \to \C$, in which case the ODEs
in~\eqref{eq:constrained_dynamics:coordinates} constitute a global
representation of the constrained dynamics. In particular, for systems
with degree of underactuation one, i.e., when
$n-m=1$,~\eqref{eq:constrained_dynamics:coordinates} is always valid
globally, and it reduces to
\begin{equation}\label{eq:constrained_dynamics:coordinates:one_dim}
\begin{aligned}
\ddot \th = & -\GammaC_{11}^1(\th) \dot th^2 - \sigma_\th(\grad P(\th)) \\
&=- \frac{\sum_a (\bperp D)_{1a} ( (\varphi^a)'' + \varphi'{}\trans
  \Gamma^a \varphi) } {\bperp D \varphi'}\Big|_{x = \varphi(\th)} \dot
\th^2 -\frac{\bperp \nabla_x \P}{\bperp D \varphi'} \Big|_{x =
  \varphi(\th)},
\end{aligned}
\end{equation}
The above is precisely the form of the constrained dynamics
in~\eqref{eq:constrained_dynamics}-\eqref{eq:Psi_functions}.

\subsection{Examples of computation of constrained dynamics}

We present two examples illustrating the formulas in
Section~\ref{sec:constrained_dynamics:coordinates}.

\begin{example}\label{ex:circle}
Consider the unit point-mass particle on the plane with inertial
coordinates $q=[q_1 \ q_2]\trans \in \Re^2$ depicted in
Figure~\ref{fig:particle}.
\begin{figure}[htb]
\psfrag{q}{$q$}
\psfrag{s}{$\th$} 
\psfrag{a}{$\alpha$}
\psfrag{F}[c]{$R_\alpha q$} 
\psfrag{C}{$\C$}
\centerline{\includegraphics[width=.3\textwidth]{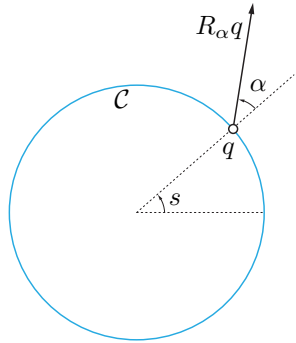}}
\caption{The set $\C$ in Example~\ref{ex:circle} and its parametrization.}
  \label{fig:particle}
\end{figure}

The particle is actuated by a force $(R_\alpha\, q) \tau$, where $\tau
\in \Re$ is the control input, $\alpha$ is a fixed parameter, and
$R_\alpha \in \mathsf{SO}(2)$ is the matrix operating a
counterclockwise rotation by angle $\alpha$.  The equations of motion
are
\begin{equation}\label{eq:EOM:planar_particle}
\ddot q = (R_\alpha \,q) \tau.
\end{equation}
This is a Lagrangian system $(\Q,g,0,F)$, with $\Q=\Re^2$, $g$ the
Euclidean inner product, and $F(q) = (q_1 \cos \alpha - q_2 \sin
\alpha) d q_1 + (q_1 \sin \alpha + q_2 \cos \alpha)dq_2$. Let $\C$ be
the unit circle centred at the origin. If $\alpha \in (-\pi/2,\pi/2)$,
then $\C$ is a regular VHC since, for all $q \in \C$, the vector
$R_\alpha\, q$ is transversal to $\C$:
\[
T_q \C + \spn F^\sharp(q) = \spn \left[\hspace*{-1ex}\begin{array}{r}
    -q_2 \\ q_1
\end{array}\right]
+ \spn\{ R_\alpha\, q\} = T_q \Re^2.
\]
The output function $e = q\trans q -1$ yields vector relative degree
$\{2,2\}$ everywhere on $\C$, and the feedback
\[
\tau^\star(q,\dot q) = \frac{1}{2 q\trans R_\alpha q} (-2 \dot q\trans
\dot q - K_p e - K_d \dot e), \ K_p,K_d>0,
\]
asymptotically stabilizes the constraint manifold $T\C$ and renders it
invariant.  The map $\varphi: \Se^1 \to \Re^2$, $\varphi(\th) = [\cos(\th)
  \ \sin (\th)]\trans$, is a parametrization of $\C$. The Christoffel
symbols $\Gamma_{ij}^k$ of $g$ are all zero. Letting
$\bperp(q):=q\trans R_{\alpha+\pi/2}$ and
using~\eqref{eq:Christoffel:induced_connection}, the Christoffel
symbol of the induced connection is $\GammaC_{11}^1 = \tan \alpha$, so
by~\eqref{eq:constrained_dynamics:coordinates:one_dim} the constrained
dynamics are given by
\[
\ddot \th = -(\tan \alpha) \dot \th^2.
\]
One can also derive the constrained dynamics by multiplying both sides
of~\eqref{eq:EOM:planar_particle} on the left by $\bperp$, and
substituting $q=\varphi(\th)$, $\ddot q = \varphi'(\th) \ddot \th +
\varphi''(\th) \dot \th^2$ in the resulting expression. The ODE one
gets this way is the same as above.
\end{example}

\begin{example}\label{ex:sphere}
Consider now a unit point-mass in $\Re^3$ with inertial coordinates
$q=[q_1 \ q_2 \ q_3]\trans \in \Re^3$, actuated by a control force
$(\diag(1,1,2)q) \tau$, where $\tau \in \Re$ is the control input:
\[
\ddot q = B(q) \tau,\]
where $B(q) = \diag(1,1,2) q$.  This is a Lagrangian system
$(\Q,g,0,F)$, where $\Q = \Re^3$, $g$ is the Euclidean inner product,
and $F(q)=q_1 d q_1 +q_2 d q_2 + 2 q_3 dq_3$. In this example,
$\D_A(q) = \spn\{F^\sharp(q)\} = \image B(q)$.  Let $\C$ be the unit
sphere centred at the origin, $\C =\{q \in \Re^3: q\trans q=1\}$. The
set $\C$ is illustrated in Figure~\ref{fig:sphere}.
\begin{figure}[htb]
\psfrag{a}{$q_1$}
\psfrag{b}{$q_2$}
\psfrag{c}{$q_3$}
\psfrag{C}{$\C$}
\psfrag{1}{$\th^1$}
\psfrag{2}{$\th^2$}
\psfrag{q}{$q$}
\psfrag{F}{$B(q)$}
  \centerline{\includegraphics[width=.35\textwidth]{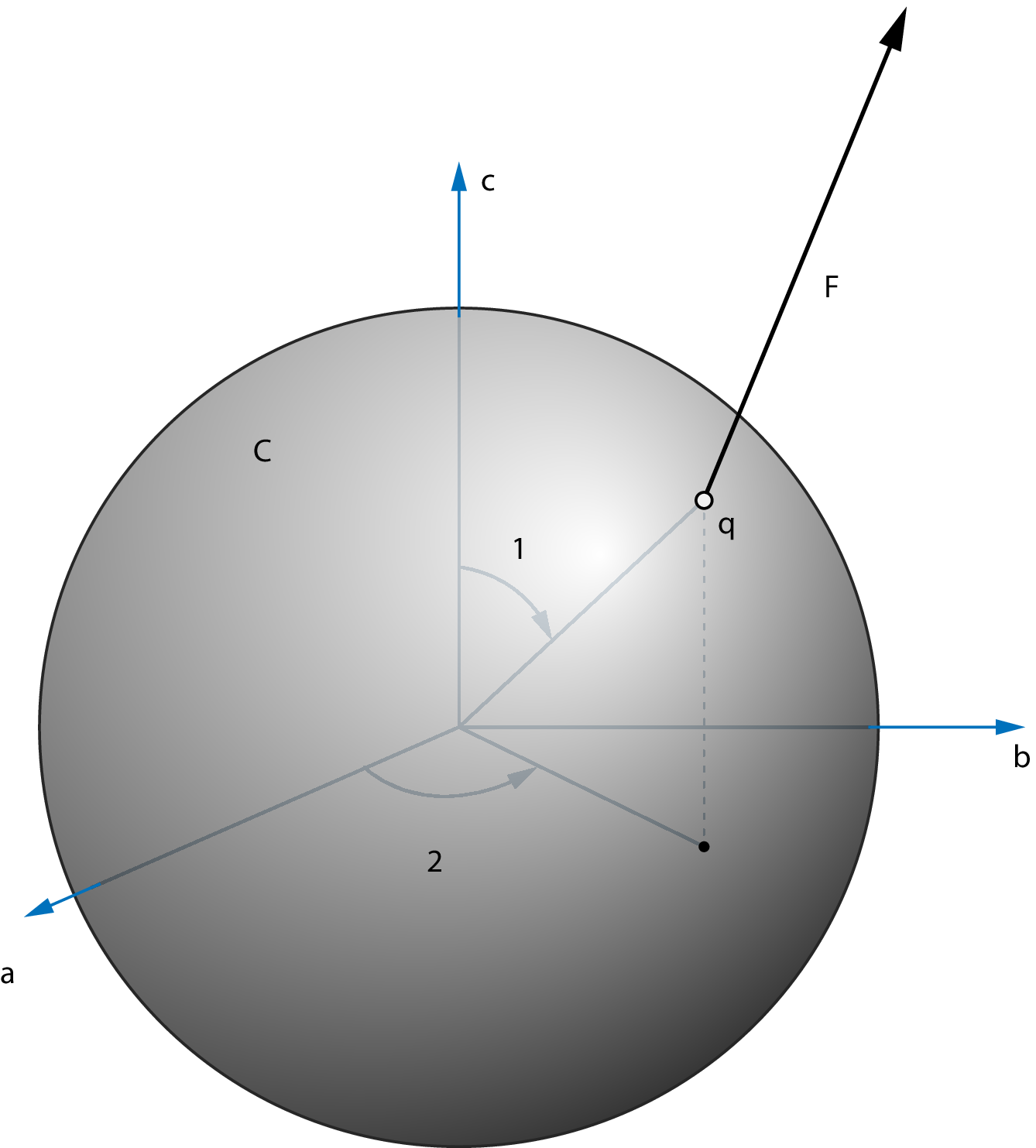}}
  \caption{The VHC $\C$ in Example~\ref{ex:sphere} and its parametrization.}
  \label{fig:sphere}
\end{figure}
For each $q \in \C$, $T_q \C$ is the orthogonal complement of
$\spn\{q\}$. Since $g(q,F^\sharp(q)) = q\trans \diag(1,1,2) q >0$, the
control force is transversal everywhere to the sphere, and therefore,
for any $q \in \C$,
\[
T_q \C \oplus \D_A(q) =T_q \Re^3.
\]
Thus $\C$ is a regular VHC. For a parametrization of $\C$, we use spherical coordinates:
\begin{equation}\label{eq:varphi:sphere}
\varphi(\th^1,\th^2)=\begin{bmatrix} \sin(\th^1) \cos(\th^2)
\\ \sin(\th^1) \sin(\th^2) \\ \cos(\th^1)
\end{bmatrix}.
\end{equation}
Letting $W = (0,\pi) \times (-\pi,\pi)$ and $\hat \C = \Se^2
/\{N,P\}$, where $N$ and $P$ are the north and south poles of $\C$,
the map $\varphi: W \to \hat C$ is a diffeomorphism. To compute the
Christoffel symbols of the induced connection on $\C$, we define a
left-annihilator of $B(q) = \diag(1,1,2) q$:
\[
B^\perp(q)  = \image \begin{bmatrix} -q_2 & q_1 & 0 \\ -q_1 q_3 & -q_2 q_3  & (q_1^2+q_2^2)/2
\end{bmatrix}.
\]
For all $q \in \hat C$, $\rank \bperp(q) =2$ and $\bperp B = 0$, as
required.  Using $\bperp$ above, $\varphi$
in~\eqref{eq:varphi:sphere},  $D=I_3$, and $\Gamma_{ij}^k=0$, we get $\GammaC_{ij}^k$
from~\eqref{eq:Christoffel:induced_connection} as
\[
\begin{array}{lll}
\GammaC_{11}^1 =\frac{\displaystyle -\sin(2 \th^1)}{\displaystyle 2(\cos^2(\th^1)+1)}, &
  \GammaC_{22}^1=\frac{\displaystyle -\sin(2 \th^1)}{\displaystyle \cos^2(\th^1)+1}, &
  \GammaC_{12}^1=\GammaC_{21}^1 = 0, \\
\GammaC_{11}^2=0, & 
\GammaC_{22}^2=0, &  
\GammaC_{12}^2 = \GammaC_{21}^2 = \cot(\th^1). 
\end{array}
\]
Therefore, the coordinate representation of the constrained dynamics
on $T\C$ is given by the ODEs
\begin{equation}\label{eq:constrained_dyn:sphere}
\begin{aligned}
& \ddot \th^1 = \frac{\sin(2 \th^1)}{2(\cos^2(\th^1)+1)} (\dot
  \th^1)^2 + \frac{\sin(2 \th^1)}{\cos^2(\th^1)+1} (\dot \th^2)^2 \\
& \ddot \th^2 = -2 \cot(\th^1) \dot \th^1 \dot \th^2.
\end{aligned}
\end{equation}
In Section~\ref{sec:examples} we will investigate the Lagrangian
structure of~\eqref{eq:constrained_dyn:sphere}.
\end{example}
\section{Existence of a Lagrangian structure for the constrained dynamics}\label{sec:ILP}

In this section we investigate this question: given the Lagrangian
control system~\eqref{eq:Lagrangian_control_system2} and a regular VHC
$\C$ of order $m$, determine whether there exists a Riemannian metric
$g_\C$ on $\C$ and a smooth potential function $P_\C : \C \to \Re$
such that the constrained
dynamics~\eqref{eq:constrained_dynamics:connection} are generated by
the Lagrangian structure $(\C,g_\C, P_\C)$. If this is the case, we
say that {\em the constrained dynamics are Lagrangian.}  The solution
in the special case $m=n-1$ was reviewed in
Theorem~\ref{thm:ILP:1dim}. Here we investigate the problem from a
more general perspective.

\subsection{A general result}

\begin{thm}\label{thm:ILP:general}
If $\C$ is a regular VHC of order $m$ for the Lagrangian control
system~\eqref{eq:Lagrangian_control_system2}, then the constrained
dynamics~\eqref{eq:constrained_dynamics:connection} are Lagrangian if
and only if the following two conditions hold:
\begin{enumerate}[(i)]
  \item The induced connection $\nablaC$ is metrizable, i.e., there
    exists a Riemannian metric $g_\C$ on $\C$ such that $\nablaC$ is
    the Riemannian connection associated with $g_\C$.  %
  \item There exists a smooth function $P_\C: \C \to \Re$ such that
    \[
    \sigma(\grad P) = \gradC P_\C,
    \]
\end{enumerate}
where $\grad_C P_\C \in \X(\C)$ is the gradient vector field of $P_\C$
induced by the metric $g_\C$, i.e., defined by the identity $dP_\C
(v_q) = g_\C (\gradC P_\C,v_q)$ for all $v_q \in T\C$.

Moreover, if (i) and (ii) hold, the Lagrangian structure of the
constrained dynamics is $(\C,g_\C,P_\C)$.
\end{thm}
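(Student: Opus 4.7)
The plan is to prove equivalence by matching the constrained dynamics equation from Theorem~\ref{thm:reduced_dynamics}, namely
\[
\nablaC_{\dot q} \dot q = -\sigma_q(\grad P(q)),
\]
with the Euler--Lagrange equation of a candidate Lagrangian structure $(\C,g_\C,P_\C)$, whose base integral curves (by Definition~\ref{defn:Lagrangian_system}) satisfy $\nabla^{g_\C}_{\dot q} \dot q = -\gradC P_\C(q)$, where $\nabla^{g_\C}$ is the Levi-Civita connection of $g_\C$.

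\emph{Sufficiency} ($(i)\wedge(ii)\Rightarrow$ Lagrangian) reduces to a direct substitution: if $\nablaC$ is the Riemannian connection of some metric $g_\C$, and if the forcing term $\sigma(\grad P)$ coincides with the $g_\C$-gradient of some $P_\C\in C^\infty(\C)$, then the constrained dynamics equation \emph{is} literally the equation of motion of $(\C,g_\C,P_\C)$, so by Definition~\ref{defn:Lagrangian_system} the constrained dynamics are Lagrangian with that structure.

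\emph{Necessity} is the substantive direction. Assume a Lagrangian structure $(\C,g_\C,P_\C)$ is given whose base integral curves agree with those of the constrained dynamics. By standard uniqueness for second-order ODEs on $\C$, the initial data $(q,v_q)\in T\C$ determine both the curve and its initial acceleration; hence the two equations must produce the same acceleration at every point, yielding
\[
\nablaC_{v_q} v_q + \sigma_q(\grad P(q)) \;=\; \nabla^{g_\C}_{v_q} v_q + \gradC P_\C(q) \qquad \forall\, v_q\in T_q\C.
\]
Evaluating at $v_q=0$ extracts $\sigma_q(\grad P(q))=\gradC P_\C(q)$, which is $(ii)$. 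Subtracting this identity back off leaves $\nablaC_{v_q}v_q = \nabla^{g_\C}_{v_q} v_q$ for every $v_q\in T\C$, i.e.\ the two connections have the same geodesic spray.

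The last step is then a standard polarization argument: both $\nablaC$ (by Theorem~\ref{thm:induced_connection}) and $\nabla^{g_\C}$ (by the Fundamental Lemma of Riemannian Geometry) are torsion-free, so for arbitrary $X,Y\in\X(\C)$ the symmetric part $\nabla_X Y + \nabla_Y X = \nabla_{X+Y}(X+Y)-\nabla_X X-\nabla_Y Y$ is determined by the geodesic spray, and the antisymmetric part satisfies $\nabla_X Y-\nabla_Y X = [X,Y]$; combining these two identities (applied to each of $\nablaC$ and $\nabla^{g_\C}$) shows $\nablaC_X Y = \nabla^{g_\C}_X Y$, establishing $(i)$. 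The main subtlety is not the polarization step itself, which is routine, but rather the clean separation of the kinematic term (quadratic in $v_q$) from the potential term (independent of $v_q$) in the comparison identity above; this separation is what makes $(i)$ and $(ii)$ decouple so neatly. The final assertion about $(\C,g_\C,P_\C)$ being \emph{the} Lagrangian structure then falls out of the sufficiency argument applied with these specific $g_\C$ and $P_\C$.
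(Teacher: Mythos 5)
Your proposal is correct and follows essentially the same route as the paper: sufficiency by direct substitution, and necessity by comparing initial accelerations of common base integral curves, extracting (ii) at zero velocity, and then concluding that two torsion-free connections with the same geodesics coincide. The only cosmetic difference is that you carry out the polarization argument explicitly, whereas the paper cites a reference (\cite[Theorem~2.101]{Poo07}) for the fact that symmetric connections with the same geodesics are equal.
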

\begin{proof}

$(\Longleftarrow)$ If conditions (i) and (ii) hold, then it follows
  directly from the definition that the constrained
  dynamics~\eqref{eq:constrained_dynamics:connection} are generated by
  the Lagrangian system $(\C,g_\C,P_\C)$.

\noindent
($\hspace*{-.5ex}\Longrightarrow$) Suppose the constrained
  dynamics~\eqref{eq:constrained_dynamics:connection} are generated by
  a Lagrangian system $(\C,g_\C,P_\C)$. Let $\bar \nabla$ be the
  Riemannian connection associated with $g_\C$.  By
  Theorem~\ref{thm:reduced_dynamics}, a curve $q: I \to \C$ satisfies
\begin{equation}\label{eq:comparison:1}
\nablaC_{\dot q} \dot q + \sigma_q (\grad P(q))=0
\end{equation}
if and only if it satisfies
\begin{equation}\label{eq:comparison:2}
\bar \nabla_{\dot q} \dot q + \gradC P_\C(q) =0.
\end{equation}
For any $q_0 \in \C$, let $q: I \to \C$ be the maximal integral curve
of the constrained dynamics~\eqref{eq:constrained_dynamics:connection}
with initial condition $(q_0,0)$. If $\{X_1,\ldots,X_{n-m}\}$ is any
local frame for $T\C$ defined in a neighbourhood of $q_0$,
identity~\eqref{eq:acceleration} and the fact that $\dot q|_{t=0} =0$
imply that
\[
\nablaC_{\dot q} \dot q \big|_{t=0} = \bar \nabla_{\dot q} \dot q
\big|_{t=0}.
\]
Since~\eqref{eq:comparison:1} and~\eqref{eq:comparison:2} hold, we
deduce that
\[
\sigma_{q_0} (\grad P(q_0)) =\gradC(P(q_0)),
\]
proving that $P_\C$ satisfies condition (ii).

Next, subtracting~\eqref{eq:comparison:1} from~\eqref{eq:comparison:2}
we get
\[
\nablaC_{\dot q} \dot q = \bar \nabla_{\dot q} \dot q.
\]
Since symmetric connections having the same geodesics are equal (see,
instance,~\cite[Theorem~2.101]{Poo07}), the above implies that
$\nablaC=\bar \nabla$. Hence, $\nablaC$ is metrizable, which proves
condition (i).
\end{proof}

\subsection{Case of orthogonal control accelerations}
Referring to the regularity
condition~\eqref{eq:transversality_condition}, when the acceleration
distribution $\D_A$ is fibrewise orthogonal to $T\C$ (see
Figure~\ref{fig:transversality:orthogonal}), the feedback rendering
$T\C$ invariant produces a control force that does no work on base
integral curves contained in $\C$.  
\begin{figure}[htb]
\psfrag{T}{$T_q \Q$}
\psfrag{U}{$T_q \C$}
\psfrag{D}{$\D_A(q)$}
\psfrag{q}{$q$}
\psfrag{C}{$\C$}
\psfrag{Q}{$\Q$}
\centerline{\includegraphics[width=.7\textwidth]{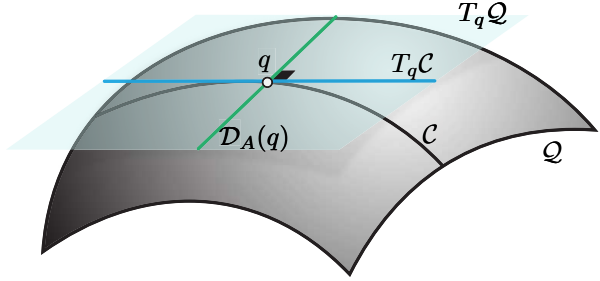}}
\caption{Illustration of the case when the control accelerations are orthogonal to $\C$.} %
\label{fig:transversality:orthogonal}
\end{figure}
In this setting, the control force
is identical to the constraint force that would arise if $\C$ were a
holonomic constraint\footnote{In classical mechanics, holonomic
  constraints such that the constraint force does no work along
  constrained solutions are called {\em ideal.}}.  Just like in
classical mechanics, one expects the constrained dynamics to be
Lagrangian, with Lagrangian structure given by the restriction of the
original Lagrangian structure to $\C$. The next proposition makes this
intuition precise. Recall the inclusion map $\iota: \C \to \Q$. The
metric $g: T \Q \times T \Q \to \Re$ on $\Q$ gives rise to a metric on
$\C$ via the pullback
\[
\iota^* g (v_q,w_q) = g(d \iota_q (v_q),d\iota_q (w_q)) \text{ for all
} v_q,w_q \in T_q \C.
\] 
The metric $\iota^* g$ is called the {\em induced metric on $\C$}.

\begin{prop}\label{prop:orthogonal_forces}
If $\C$ is a regular VHC of order $m$ for the Lagrangian control
system~\eqref{eq:Lagrangian_control_system2} such that
\begin{equation}\label{eq:orthogonal_regularity}
(\forall q \in \C) \ T_q \C \overset{\perp}{\oplus} \D_A(q) = T_q \Q,
\end{equation}
with orthogonality holding with respect to the metric $g$, then the
constrained dynamics~\eqref{eq:constrained_dynamics:connection} are
Lagrangian with Lagrangian structure $(\C,g_\C,P_\C)$, where $g_\C =
\iota^* g$ and $P_\C = P|_\C= P \circ \iota$.
\end{prop}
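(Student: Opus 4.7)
The plan is to verify the two conditions of Theorem~\ref{thm:ILP:general} with the candidate Lagrangian structure $(\C,\iota^*g, P|_\C)$. Because the hypothesis~\eqref{eq:orthogonal_regularity} is exactly that $\D_A$ is the orthogonal complement of $T\C$ with respect to $g$, the projection $\sigma_q$ of the coordinate-free theory coincides, at each $q \in \C$, with the orthogonal projection $T_q \Q \to T_q \C$ along $T_q \C^\perp$. Both conditions of Theorem~\ref{thm:ILP:general} are then essentially classical identities relating the ambient and induced geometries.

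First I would verify condition (i), namely that $\nablaC$ is the Riemannian connection $\bar\nabla$ of $g_\C = \iota^* g$. The Fundamental Lemma of Riemannian Geometry says that it suffices to check that $\nablaC$ is symmetric (already guaranteed by Theorem~\ref{thm:induced_connection}) and compatible with $g_\C$. For compatibility, let $X,Y,Z \in \X(\C)$ with arbitrary smooth extensions to a neighbourhood of $\C$ in $\Q$. On $\C$ one has $g_\C(Y,Z) = g(Y,Z)$, and so $X(g_\C(Y,Z)) = X(g(Y,Z)) = g(\nabla_X Y, Z) + g(Y, \nabla_X Z)$ by compatibility of $\nabla$ with $g$. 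Writing $\nabla_X Y = \sigma(\nabla_X Y) + (\nabla_X Y)^\perp = \nablaC_X Y + (\nabla_X Y)^\perp$ and noting that $(\nabla_X Y)^\perp \in \D_A$ is $g$-orthogonal to $Z \in T\C$, the terms with the normal components vanish, leaving $X(g_\C(Y,Z)) = g_\C(\nablaC_X Y,Z) + g_\C(Y,\nablaC_X Z)$. This is precisely~\eqref{eq:compatibility} for $\nablaC$ and $g_\C$, proving (i). (This is of course the classical Gauss formula for submanifolds of a Riemannian manifold.)

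Next I would verify condition (ii), that $\sigma(\grad P) = \gradC P_\C$ with $P_\C = P \circ \iota$. For any $v_q \in T_q \C$, using the defining identity of the gradient in $(\Q,g)$ and the definition of $\iota^* g$,
\begin{equation*}
g_\C(\sigma(\grad P), v_q) = g(\sigma(\grad P),v_q) = g(\grad P,v_q) - g((\grad P)^\perp, v_q) = dP_q(v_q) = d(P_\C)_q(v_q),
\end{equation*}
where in the middle equality we used that $(\grad P)^\perp \in \D_A$ is $g$-orthogonal to $v_q \in T_q \C$, and in the last equality we used that $d(P\circ \iota)_q = dP_q \circ d\iota_q = dP_q|_{T_q\C}$. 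By nondegeneracy of $g_\C$, this forces $\sigma(\grad P) = \gradC P_\C$, establishing (ii).

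With (i) and (ii) in hand, the conclusion follows directly from Theorem~\ref{thm:ILP:general}: the constrained dynamics are Lagrangian with structure $(\C, \iota^* g, P|_\C)$. The only subtlety, which I would be careful to spell out, is the issue of extending vector fields on $\C$ to vector fields on $\Q$ in order to apply the ambient connection $\nabla$; this is exactly the abuse of notation justified in the paragraph preceding Theorem~\ref{thm:induced_connection}, so no genuine obstruction arises. I do not expect any serious difficulty: once one recognizes that the orthogonality hypothesis turns $\sigma$ into the $g$-orthogonal projector, both conditions of Theorem~\ref{thm:ILP:general} reduce to one-line verifications from the definitions.
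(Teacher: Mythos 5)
Your proposal is correct and follows essentially the same route as the paper: verify compatibility of $\nablaC$ with $\iota^* g$ by splitting $\nabla_X Y$ into its $T\C$ and $\D_A$ components and using orthogonality to kill the normal terms, then verify $\sigma(\grad P)=\gradC P|_\C$ by pairing against arbitrary $v_q \in T_q\C$. The only cosmetic difference is that the paper phrases the second step via the self-adjointness of the projector $\sigma_q$, whereas you use the orthogonal decomposition of $\grad P$ directly; these are the same argument.
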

\begin{proof}
Since $\nabla$ is a Riemannian connection, it satisfies
\begin{equation}\label{eq:compatibility2}
X(g(Y,Z)) = g(\nabla_X Y,Z) + g(Y,\nabla_X Z)
\end{equation}
for all $X,Y,Z \in \X(\Q)$, and therefore also for all $X,Y,Z \in
\X(\C)$. Let $X,Y,Z \in \X(\C)$ be arbitrary.  In light of the
regularity condition~\eqref{eq:transversality:coordinates}, we have
\[
\nabla_X Y = \sigma(\nabla_X Y) + N_X Y = \nablaC_X Y + N_X Y,
\]
where $N_X Y$ is a vector field in the control distribution
$\spn\{(F^i)^\sharp,i=1,\ldots,m\}$. By the orthogonality hypothesis,
we have $g(N_X Y,Z) =0$ for all $Z \in \X(\C)$, implying that
\begin{equation}\label{eq:orthogonality_consequence1}
g(\nabla_X Y,Z) = g(\nablaC_X Y,Z) = g_\C(\nablaC_X Y,Z).
\end{equation}
The second identity in~\eqref{eq:orthogonality_consequence1} is due to
the fact that $\nablaC_X Y$ and $Z$ are vector fields on $\C$.
Analogously to~\eqref{eq:orthogonality_consequence1}, we have
\begin{equation}\label{eq:orthogonality_consequence2}
g(Y,\nabla_X Z) = g_\C(Y,\nablaC_X Z).
\end{equation}
Substituting~\eqref{eq:orthogonality_consequence1}
and~\eqref{eq:orthogonality_consequence2}
into~\eqref{eq:compatibility2} and using the fact that $g(Y,Z) =
g_\C(Y,Z)$, we get
\[
X(g_\C(Y,Z)) = g_\C(\nablaC_X Y,Z) + g_\C(Y,\nablaC_X Z),
\]
implying that $\nablaC$ is compatible with $g_\C$.  Since, by
Theorem~\ref{thm:induced_connection}, $\nablaC$ is symmetric,
$\nablaC$ is Riemannian with respect $g_\C$, proving that condition
(i) of Theorem~\ref{thm:ILP:general} holds.

Next, we need to show that, for each $q \in \C$, $\sigma_q (\grad
P(q)) = \gradC P|_\C$, where $\gradC P|_\C$ is the gradient vector
field of $P|_\C$ induced by the metric $g_\C$.  Since, by assumption,
the subspace $\D_A(q)$ is orthogonal to $T_q \C$, the projection
$\sigma_q : T_q \Q \to T_q \C$ along $\D_A(q)$ is a map whose kernel,
$\D_A(q)$, is orthogonal to its image, $T_q \C$. This fact implies
that $\sigma_q$ is a self-adjoint map. Thus, for all $v_q \in T_q \C$,
\begin{equation}\label{eq:dummy1}
g( \sigma_q (\grad P(q)), v_q ) = g(\grad P(q),\sigma_q(v_q)) =
g(\grad P(q),v_q).
\end{equation}
Using the definition of $\grad$, we have
\begin{equation}\label{eq:dummy2}
g( \grad P(q), v_q ) = dP_q(v_q).
\end{equation}
Since $P|_\C = P \circ \iota$ and, for all $v_q \in T_q \C$, $d
\iota_q(v_q)=v_q$, we may write
\begin{equation}\label{eq:dummy3}
dP_q (v_q) = dP_q \circ d\iota_q (v_q) = d(P\circ \iota)_q (v_q) =
(dP|_\C)_q (v_q).
\end{equation}
Substituting~\eqref{eq:dummy2} and~\eqref{eq:dummy3}
into~\eqref{eq:dummy1} and using the definition of $\gradC$, we get
\[
g( \sigma_q (\grad P(q)), v_q ) = (dP|_\C)_q (v_q) = g_\C( \gradC
P|_\C(q),v_q),
\]
for all $v_q \in T_q \C$. Since $\sigma_q (\grad P(q))$ and $v_q$ lie
in $T_q \C$, $g( \sigma_q (\grad P(q)), v_q ) = g_\C( \sigma_q (\grad
P(q)), v_q )$, and so
\[
g_\C( \sigma_q (\grad P(q)), v_q ) = g_\C( \gradC P|_\C(q),v_q),
\]
for all $v_q \in T_q \C$.  In conclusion, $\sigma_q (\grad P(q)) =
\gradC P|_\C(q)$, proving that $P|_\C$ satisfies condition (ii) of
Theorem~\ref{thm:ILP:general}.  
\end{proof}
\begin{rem}
As mentioned earlier, the foregoing proposition states that the
constrained dynamics associated with a VHC satisfying
condition~\eqref{eq:orthogonal_regularity} coincide with the
constrained dynamics one would have if the Lagrangian system
$(\Q,g,P)$ (without control) were subjected to an ideal holonomic
constraint. The result in Proposition~\ref{prop:orthogonal_forces} is
not new once placed in the context of geometric mechanics, and we do
not claim it to be original.  For
instance,~\cite[Theorem~2.7]{GodNat14} states an analogous result. The
value of Proposition~\ref{prop:orthogonal_forces} lies in the that it
connects the concept of virtual holonomic constraint in control theory
with the concept of ideal holonomic constraint in mechanics in the
special case when the control accelerations are fibrewise orthogonal
to $T\C$. To gain further understanding of the relationship between
Proposition~\ref{prop:orthogonal_forces} and established concepts in
geometric mechanics, it is worth comparing it with Proposition 4.97
in~\cite{geomControlBullo}. In~\cite{geomControlBullo}, a holonomic
constraint $\C$ is a maximal integral manifold of a distribution $\D$
representing a linear velocity constraint. This distribution is used
to define a {\em constrained connection} $\nablaD$ on $\Q$.
Proposition 4.97 in~\cite{geomControlBullo} states that the
restriction of $\nablaD$ to $\X(\C) \times \X(\C)$ is the Riemannian
connection of $\iota^* g$, and Proposition 4.85
in~\cite{geomControlBullo} implies that $\nablaD_X Y$ coincides with
our $\nablaC_X Y$ for all $X,Y \in \X(\C)$.  Taken together, these
results recover the proof of the first part of
Proposition~\ref{prop:orthogonal_forces}, illustrating the strong
analogy between VHCs satisfying
condition~\eqref{eq:orthogonal_regularity} and holonomic constraints
in~\cite{geomControlBullo}. As a caveat, we remark that, unlike the
framework in~\cite{geomControlBullo}, we do not require an integrable
distribution to define the induced connection $\nablaC$, for $\nablaC$
is only required to be defined on $\X(\C) \times \X(\C)$.
\end{rem}

\section{Conditions for metrizability of affine connections}\label{sec:metrizability}

Theorem~\ref{thm:ILP:general} establishes that in the absence of a
potential function, assessing whether or not the constrained dynamics
induced by a regular VHC are Lagrangian amounts to assessing the
metrizability of the induced connection. In this section we review the
main results on metrizability of affine connections, presenting
concrete results for the cases $\dim \C =1$ (already covered in
Theorem~\ref{thm:ILP:1dim}) and $\dim \C=2$. To keep the notation
simple, throughout the section we will consider a symmetric affine
connection $\nabla : \X(\C) \times \X(\C) \to \X(\C)$ with the
understanding that all result will apply to the induced connection
$\nablaC$. We also assume throughout that the submanifold $\C$ is
connected.

\subsection{The holonomy group of an affine connection}
%
%
%
A vector field $X(t)$ along a smooth curve $\gamma$ on $\C$ is said to
be {\em parallel} if its covariant derivative along $\gamma$ vanishes,
i.e., $D_t X \equiv 0$. Given a point $q \in \C$ and a tangent vector
$v_q \in T_q \C$, the equation $D_t X=0$ with initial conditions
$X(0)=v_q$ uniquely determines a parallel vector field $X(t)$ along
$\gamma$ (see~\cite[Chapter II, Proposition 3.3]{book:598491}). This
vector field is called the {\em parallel translation of $v_q$ along
  $\gamma$}.  In local coordinates, the equation $D_t X=0$ is the
linear time-varying ODE
\begin{equation}\label{eq:parallel_transport:coordinates}
\dot X^k = - \sum_{i,j}\dot \gamma^i(t) \Gamma_{ij}^k (\gamma(t)) X^j,
\ k=1,\ldots, n-m,
\end{equation}
where $\Gamma_{ij}^k$ are the Christoffel symbols of $\nabla$ and
$(X^1,\ldots, X^{n-m})$ is the coordinate representation of $X$.

In what follows, if $q,p \in \C$, a piecewise smooth curve in $\C$
starting at $q$ and ending at $p$ will be denoted $\gamma_q^p$. More
precisely, $\gamma_q^p:[0,T]\to \C$ will be a piecewise smooth map
such that $\gamma_q^p(0)=q$ and $\gamma_q^p(T)=p$. On the other hand,
a {\em loop at $q$}, i.e., a piecewise smooth closed curve through $q$
will be denoted by $\gamma_q$. The set of all loops at $q$ will be
denoted by $L_q$.

For a curve $\gamma_q^p$, the {\em parallel transport map along
  $\gamma_q^p$}, denoted $\PP_{\gamma_q^p} :T_q \C \to T_p \C$, is
defined as $\PP_{\gamma_q^p} (v_q) :=X(T)$, where $X$ is the parallel
translation of $v_q$ along $\gamma_q^p$. For $\gamma_q \in L_q$,
$\PP_{\gamma_q}$ maps $T_q \C$ onto itself.  The parallel transport map
associated with a loop is illustrated in Figure~\ref{fig:holonomy}.
\begin{figure}[htb]
\psfrag{q}[c]{\small $q$} 
\psfrag{v}[c]{\small $v_q$}
\psfrag{w}[c]{\small $\PP_{\gamma_q}(v_q)$} 
\psfrag{g}[c]{\small $\gamma_q$}
\psfrag{T}{\small $T_q \C$}
\centerline{\includegraphics[width=.6\textwidth]{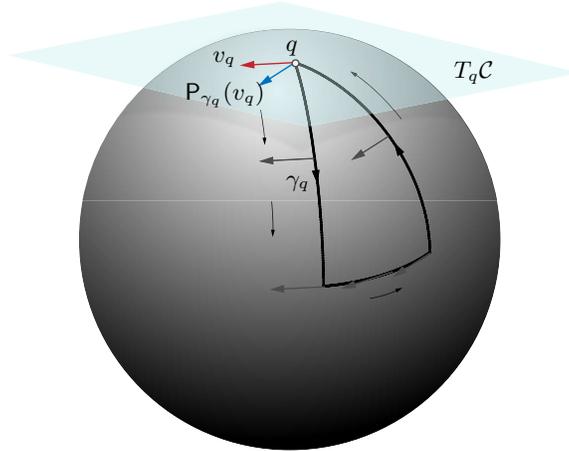}}
\caption{The parallel transport map at the north pole of the unit
  sphere in $\Re^3$, with Riemannian connection induced by the
  Euclidean metric in $\Re^3$. The loop $\gamma_q$ is a triangle on
  the sphere.}
  \label{fig:holonomy}
\end{figure}

Denoting by $(\gamma_q^p)^{-1}$ the curve obtained by reversing the
orientation of $\gamma_q^p$, and by $\gamma_q^p \cdot \gamma_p^r$ the
concatenation of $\gamma_q^p$ with $\gamma_p^r$, we have the following
result.

\begin{prop}[\cite{kobayashi1996foundations}, Chapter II, Proposition 3.3]
\label{prop:parallel_transport:group_property}
For each $q \in \C$ and any piecewise smooth curves $\gamma_q^p$,
$\gamma_p^r$, the parallel transport map $\PP_{\gamma_q^p}:T_q \C \to
T_p \C$ is an isomorphism enjoying the following properties:

\begin{enumerate}[(i)]

\item If $\gamma_q \in L_q$ is the constant loop $\gamma_q(t)\equiv
  q$, then $\PP_{\gamma_q}$ is the identity map on $T_q \C$.

\item $\PP_{(\gamma_q^p)^{-1}} = (\PP_{\gamma_q^p})^{-1}$.

\item $\PP_{\gamma_q^p \cdot \,\gamma_p^r} = \PP_{\gamma_q^p} \circ
  \PP_{\gamma_p^r}$.

\end{enumerate}

In particular, the set of all isomorphisms $\{\PP_{\gamma_q} :
\gamma_q \in L_q\}$ forms a group under composition.
\end{prop}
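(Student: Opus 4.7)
The plan is to exploit the linear time-varying ODE characterization of parallel transport given in~\eqref{eq:parallel_transport:coordinates}. In any chart on $\C$, the equation $D_t X = 0$ is a linear homogeneous ODE in the coordinates of $X$, so by the standard theory of linear ODEs the solution map sending initial data $v_q \in T_q\C$ to the value $X(T) \in T_p\C$ is a linear map. Injectivity follows because $X \equiv 0$ is the unique solution with zero initial condition; since $\dim T_q\C = \dim T_p\C$, injectivity upgrades to a linear isomorphism. For piecewise smooth curves, I would run this argument on each smooth segment and glue the resulting linear isomorphisms at the breakpoints using the flow property of ODE solutions.

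For property (i), a constant loop has $\dot\gamma \equiv 0$, so the ODE collapses to $\dot X = 0$ and forces $X(t) \equiv v_q$, whence $\PP_{\gamma_q} = \mathrm{id}$. For (ii), given a parallel translation $X$ of $v_q$ along $\gamma_q^p : [0,T] \to \C$, define $Y(s) := X(T-s)$ along the reversed curve $(\gamma_q^p)^{-1}(s) = \gamma_q^p(T-s)$; the chain rule shows that $Y$ satisfies the parallel transport ODE along $(\gamma_q^p)^{-1}$ (time reversal negates $\dot\gamma$ and the derivative $\dot X$ simultaneously, leaving the equation invariant). Since $Y(0) = X(T) = \PP_{\gamma_q^p}(v_q)$, uniqueness identifies $Y$ with the parallel translation of $\PP_{\gamma_q^p}(v_q)$ along $(\gamma_q^p)^{-1}$, giving $\PP_{(\gamma_q^p)^{-1}} \circ \PP_{\gamma_q^p} = \mathrm{id}$ and hence (ii). For (iii), the parallel translation of $v_q$ along the concatenated curve can be constructed by solving the ODE first along $\gamma_q^p$ (arriving at $\PP_{\gamma_q^p}(v_q)$ at $p$) and then continuing with this value as initial data along $\gamma_p^r$; by uniqueness of solutions to the initial value problem on each smooth segment, the resulting vector coincides with the parallel translation along the concatenation, yielding the composition identity.

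The final group statement is then immediate from (i)--(iii) restricted to $L_q$: closure holds because the concatenation of two elements of $L_q$ lies in $L_q$ and (iii) gives the composition formula on parallel transports; associativity is inherited from associativity of composition of linear maps; the constant loop supplies the identity element via (i); and inverses are supplied by (ii) since $(\gamma_q)^{-1} \in L_q$ whenever $\gamma_q \in L_q$. No genuinely difficult step is anticipated; the only subtlety is that the time-reversal argument for (ii) and the gluing argument for (iii) must be applied segment-by-segment on a piecewise smooth curve, which reduces to the smooth case on each segment combined with the concatenation identity itself.
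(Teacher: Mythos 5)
Your proof is correct and is the standard argument: the paper itself gives no proof of this proposition (it is quoted from Kobayashi--Nomizu, Ch.~II, Prop.~3.3), and your reduction to the linear time-varying ODE \eqref{eq:parallel_transport:coordinates}, with linearity and uniqueness of solutions giving the isomorphism, time reversal giving (ii), and segment-wise gluing giving (iii), is exactly how this is proved. One small remark: with the usual ``rightmost map acts first'' reading of $\circ$, your gluing argument yields $\PP_{\gamma_q^p \cdot\, \gamma_p^r} = \PP_{\gamma_p^r} \circ \PP_{\gamma_q^p}$, which is the version that typechecks as a map $T_q\C \to T_r\C$; the order written in the proposition's item (iii) only makes sense under a diagrammatic (left-to-right) composition convention, so your formula is the correct one and the discrepancy is notational, not a gap in your argument.
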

The group $\Hol_q(\nabla):=\{\PP_{\gamma_q} : \gamma_q \in L_q\}$ of
all parallel transport maps along loops at $q$ is called the {\em
  holonomy group of $\nabla$ with reference point $q$}, while the
subgroup $\Hol^0_q(\nabla) :=\{\PP_{\gamma_q} : \gamma_q \in L_q
\text{ is contractible to } q\}$ is the {\em restricted holonomy
  group}.
A remarkable property of the holonomy groups is that they possess a Lie group
structure.
\begin{thm}[\cite{kobayashi1996foundations}, Chapter II, Theorem 4.2]
\label{thm:holonomy:Liesubgroup}
Let $\C$ be a connected manifold, and let $q \in \C$. Then the
following are true:

\begin{enumerate}[(i)]
\item $\Hol_q^0(\nabla)$ is a connected Lie subgroup of $\GL(n)$.

\item $\Hol_q(\nabla)$ is a Lie subgroup of $\GL(n)$ whose
  identity component is $\Hol^0_q(\nabla)$.
%
\end{enumerate}

\end{thm}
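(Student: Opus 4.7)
The plan is to combine the group-theoretic structure supplied by Proposition~\ref{prop:parallel_transport:group_property} with a classical Lie-theoretic criterion that lets one upgrade abstract subgroups of $\GL(n)$ to Lie subgroups. For part (i), my first step is to verify that $\Hol_q^0(\nabla)$ is path-connected as a subgroup of $\GL(T_q\C)$. The key observation is that if $\gamma_q$ is a piecewise smooth loop at $q$ that is contractible to $q$, then one may select (after a standard smoothing argument) a piecewise smooth homotopy $H:[0,1]\times[0,T]\to\C$ with $H(0,\cdot)=\gamma_q$ and $H(1,\cdot)\equiv q$. The one-parameter family of loops $\{H(s,\cdot)\}_{s\in[0,1]}$ produces a family of parallel transport maps $\PP_{H(s,\cdot)}\in\GL(T_q\C)$, and by smooth dependence of solutions of the linear ODE~\eqref{eq:parallel_transport:coordinates} on parameters, the map $s\mapsto \PP_{H(s,\cdot)}$ is continuous. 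This yields a continuous path in $\Hol_q^0(\nabla)$ joining $\PP_{\gamma_q}$ to the identity. I would then invoke the theorem of Yamabe, asserting that every path-connected subgroup of a Lie group carries a unique structure of connected Lie subgroup, to conclude (i).

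For part (ii), I first verify that $\Hol_q^0(\nabla)$ is normal in $\Hol_q(\nabla)$: for any loop $\gamma_q\in L_q$ and any contractible loop $\delta_q$, the conjugate $\gamma_q\cdot\delta_q\cdot\gamma_q^{-1}$ is again contractible, and Proposition~\ref{prop:parallel_transport:group_property} gives that its parallel transport equals $\PP_{\gamma_q}\circ\PP_{\delta_q}\circ\PP_{\gamma_q}^{-1}$. Next, I would show that the quotient $\Hol_q(\nabla)/\Hol_q^0(\nabla)$ is at most countable by exhibiting a surjective group homomorphism from $\pi_1(\C,q)$ onto it, combined with the fact that the fundamental group of a second-countable smooth manifold is countable. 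With normality and a countable quotient established, I would declare the cosets of $\Hol_q^0(\nabla)$ to be the connected components of $\Hol_q(\nabla)$, transporting the Lie-group chart from $\Hol_q^0(\nabla)$ to each coset by left translation; this endows $\Hol_q(\nabla)$ with the structure of a Lie subgroup of $\GL(n)$ whose identity component is precisely $\Hol_q^0(\nabla)$.

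The main obstacle is the appeal to Yamabe's theorem in step (i): its proof is itself nontrivial and draws on delicate Lie-theoretic facts. A more self-contained alternative would be to invoke the Ambrose-Singer holonomy theorem, which identifies the Lie algebra of $\Hol_q^0(\nabla)$ with the linear span of curvature endomorphisms parallel-transported back to $q$. That route produces an intrinsic Lie algebra and, via the exponential map, an intrinsic Lie subgroup, but it requires developing the apparatus of principal connections and curvature on the frame bundle of $\C$, which is substantially heavier than the path-connectedness argument sketched above.
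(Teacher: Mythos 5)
Your argument is correct and is essentially the classical proof of this result, which the paper does not prove itself but simply cites from Kobayashi--Nomizu (Chapter II, Theorem 4.2): path-connectedness of $\Hol_q^0(\nabla)$ via a based homotopy together with continuous dependence of the parallel-transport ODE on parameters, Yamabe's theorem to obtain the Lie subgroup structure, and then normality plus countability of $\Hol_q(\nabla)/\Hol_q^0(\nabla)$ (via the surjection from $\pi_1(\C,q)$) to extend the structure to the full holonomy group. Since the paper offers no proof of its own, there is nothing to compare beyond noting that your route coincides with that of the cited reference.
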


\subsection{Schmidt's metrizability theorem}
The significance of the holonomy group at it pertains to the
metrizability of $\nabla$ rests upon the following consideration. If
$\nabla$ is Riemannian with respect to a metric $g$, then it is a
basic fact of Riemannian geometry that for any two vector fields $V,W$
that are parallel along a curve $\gamma$, $g(V,W)$ is constant along
$\gamma$. In particular, for any $\gamma_q \in \Hol_q (\nabla)$,
$g_q(\PP_{\gamma_q}(v_q),\PP_{\gamma_q}(w_q)) = g_q(v_q,w_q)$. Also,
by definition, $g_q$ is a positive definite, symmetric bilinear form
on $T_q \C$. In conclusion, a necessary condition for $\nabla$ to be
metrizable is that there exists a symmetric, positive definite
bilinear form $T_q \C \times T_q \C \to \Re$ that is invariant under
the holonomy group $\Hol_q(\nabla)$. This condition is also
sufficient.
\begin{thm}[\cite{Schmidt73}] \label{thm:holonomy:schmidt}
Let $\nabla$ be a symmetric affine connection on a connected manifold
$\C$ and let $q \in \C$ be arbitrary. Then $\nabla$ is metrizable if
and only there exists a symmetric positive definite bilinear form $g_q
:T_q \C \times T_q \C \to \Re$ that is invariant under
$\Hol_q(\nabla)$, i.e., for all $\gamma_q \in \Hol_q(\nabla)$ and all
$ v_q,w_q \in T_q \C$,
\begin{equation}\label{eq:holonomy_invariance}
g_q (\PP_{\gamma_q}(v_q),\PP_{\gamma_q}(w_q) ) = g_q(v_q,w_q).
\end{equation}
\end{thm}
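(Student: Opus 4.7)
The plan is to prove both implications separately, with the nontrivial content lying in the converse direction.

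For the forward implication, I would invoke the standard fact that a Riemannian connection $\nabla$ satisfies $\nabla g = 0$, which by~\eqref{eq:compatibility} unfolds into the statement that for any smooth curve $\gamma$ and any two vector fields $V,W$ parallel along $\gamma$ (so $D_t V = D_t W = 0$), the scalar function $t\mapsto g(V(t),W(t))$ has vanishing derivative and is therefore constant. Applying this to a loop $\gamma_q \in L_q$ with initial data $V(0)=v_q$ and $W(0)=w_q$ gives precisely~\eqref{eq:holonomy_invariance}, so $g_q$ is invariant under $\Hol_q(\nabla)$.

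For the converse, the idea is to propagate $g_q$ throughout $\C$ by parallel transport. Given any $p\in \C$, use connectedness to pick a piecewise smooth curve $\gamma_q^p$ from $q$ to $p$ and set
\[
g_p(v_p,w_p) := g_q\bigl(\PP_{\gamma_q^p}^{-1}(v_p),\PP_{\gamma_q^p}^{-1}(w_p)\bigr).
\]
Well-definedness is the key point where the hypothesis enters: if $\tilde\gamma_q^p$ is a second curve, Proposition~\ref{prop:parallel_transport:group_property} identifies $\PP_{\gamma_q^p}^{-1}\circ \PP_{\tilde\gamma_q^p}$ with $\PP_{\gamma_q}$ for the loop $\gamma_q = \tilde\gamma_q^p\cdot(\gamma_q^p)^{-1}\in L_q$, so the holonomy invariance~\eqref{eq:holonomy_invariance} guarantees that the two definitions of $g_p$ coincide. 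Symmetry, bilinearity, and positive definiteness of $g_p$ follow immediately from the corresponding properties of $g_q$ and the fact that $\PP_{\gamma_q^p}^{-1}$ is a linear isomorphism.

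To check smoothness of $p\mapsto g_p$, I would work locally in a normal neighbourhood $U$ of an arbitrary point $p_0$, fixing once and for all a curve from $q$ to $p_0$ and extending to each $p\in U$ by concatenating with the unique radial geodesic from $p_0$ to $p$. Parallel transport along this smoothly varying family of geodesics depends smoothly on the endpoint (from standard ODE theory applied to~\eqref{eq:parallel_transport:coordinates}), so $g$ is smooth on $U$, and hence globally. Finally, by construction any two vector fields parallel along any curve have constant inner product under $g$, which unwinds to the identity $\nabla g = 0$, i.e., $\nabla$ is compatible with $g$ in the sense of~\eqref{eq:compatibility:total_der}. Since $\nabla$ is also symmetric by hypothesis, the Fundamental Lemma of Riemannian Geometry identifies $\nabla$ with the Levi-Civita connection of $g$, so $\nabla$ is metrizable.

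The main obstacle is the smoothness of the constructed metric: the definition uses piecewise smooth curves, which a priori make no smoothness claim with respect to the endpoint. The resolution is the local normal-neighbourhood trick above, where we replace arbitrary concatenations of curves by a single smoothly-parametrized family of radial geodesics, thereby reducing smoothness of $g$ to smoothness of parallel transport in initial data.
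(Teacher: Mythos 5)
Your proposal is correct and follows essentially the same route as the paper, which only sketches this argument: necessity via constancy of $g(V,W)$ for parallel fields, and sufficiency by extending $g_q$ through parallel transport as in~\eqref{eq:metric:parallel_translation}, with holonomy invariance ensuring path-independence. Your write-up additionally supplies the smoothness argument via radial geodesics in a normal neighbourhood (a detail the paper leaves implicit) and correctly uses $\PP_{\gamma_q^p}^{-1}$ where the paper's formula has a small typographical slip in the direction of the transport map.
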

Schmidt's theorem only requires one to determine whether or not a
bilinear form on $T_q \C \times T_q \C$ exists which is invariant
under $\Hol_q(\gamma_q)$. It then guarantees that the form in question
can be extended to a Riemannian metric defined on the whole of $T\C
\times T\C$.  We have already outlined the necessity part of the
proof.  The idea of the sufficiency proof is to extend the bilinear
form $g_q$ to the entire tangent bundle $T \C$ by parallel translation
along curves connecting $q$ to arbitrary points in $\C$. Specifically,
for arbitrary $p \in \C$, pick an arbitrary piecewise smooth
$\gamma_q^p$ connecting $p$ and $q$, and define
\begin{equation}\label{eq:metric:parallel_translation}
g_p(v_p,w_p):=g_q\big(\PP_{\gamma_q^p}(v_p),\PP_{\gamma_q^p}(w_p)\big).
\end{equation}
The invariance of $g_q$ under $\Hol_q(\nabla)$ guarantees that $g_p$
is path-independent, giving rise to a Riemannian metric on $\C$.  One
can  easily show that the extension so obtained is a Riemannian
metric associated with $\nabla$.

The result in Theorem~\ref{thm:holonomy:schmidt} is of difficult
application because the group $\Hol_q(\nabla)$ is generally hard to
find. The Ambrose-Singer theorem~\cite{Amb_Sin_1953} characterizes
$\Hol^0_q(\nabla)$ in terms of the curvature form of the connection,
but it requires the knowledge of the so-called holonomy bundle, an
object which is not readily available. In special cases, however, the
computations are more manageable, as we discuss next.

\subsection{Flat connections} 
The {\em curvature endomorphism induced by an affine connection
  $\nabla$} is the map $\X(\C) \times \X(\C) \times \X(\C) \to \X(\C)$
defined as
\[
R(X,Y) Z :=\nabla_X \nabla_Y Z - \nabla_Y \nabla_X Z - \nabla_{[X,Y]}
Z.
\]
If $\nabla$ is a flat connection, i.e., the curvature $R$ induced by
$\nabla$ is zero, then the Ambrose-Singer theorem implies that
$\Hol_q^0(\nabla)$ is trivial, and by~\cite[Theorem 2]{AusMar55} there
exists a surjective homomorphism $\pi_1(\C,q) \to \Hol_q(\nabla)$,
where $\pi_1(\C,q)$ is the first homotopy group of $\C$ with reference
point $q$. The homomorphism in question sends an equivalence class of
loops $[\gamma_q] \in \pi_1(\C,q)$ to a parallel transport map
$\PP_{\gamma_q} \in \Hol_q(\nabla)$.  In this case, to apply
Theorem~\ref{thm:holonomy:schmidt} it suffices to compute the
transport maps associated with the generators of $\pi_1(\C,q)$, as
stated next.

\begin{prop}\label{prop:holonomy:flat}
Let $\nabla$ be a symmetric affine connection on a connected manifold
$\C$, and suppose that $\nabla$ is flat. Let $q \in \C$ be arbitrary,
and let $S_q$ be a set of generators of $\pi_1(\C,q)$. Then $\nabla$ is
metrizable if and only if there exists a symmetric positive definite
bilinear form $g_q : T_q \C \times T_q \C \to \Re$ such that, for each
equivalence class $E_q \in S_q$, there exist a piecewise smooth curve
$\gamma_q \in E_q$ for which
\[
g_q (\PP_{\gamma_q}(v_q), \PP_{\gamma_q}(w_q)) = g_q(v_q,w_q),
\]
for any $v_q,w_q \in T_q\C$.
\end{prop}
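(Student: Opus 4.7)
The plan is to reduce the proposition to Theorem~\ref{thm:holonomy:schmidt} by exploiting the special structure of $\Hol_q(\nabla)$ when $\nabla$ is flat. As recalled in the paragraph preceding the statement, flatness together with the Ambrose--Singer theorem forces $\Hol^0_q(\nabla) = \{\mathrm{id}\}$, so parallel transport along a loop depends only on its homotopy class, and the assignment $\Phi \colon [\gamma_q] \mapsto \PP_{\gamma_q}$ descends to a well-defined, surjective group homomorphism $\pi_1(\C,q) \to \Hol_q(\nabla)$. Once this is in place, the problem of invariance under the entire holonomy group reduces to the problem of invariance under a generating set.

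The necessity direction is immediate from Theorem~\ref{thm:holonomy:schmidt}: a metric rendering $\nabla$ Riemannian is preserved by every parallel transport map, hence in particular by any chosen representative of each generator class $E_q$. For sufficiency, suppose we are given a symmetric positive definite $g_q$ such that for each $E_q \in S_q$ some $\gamma_q \in E_q$ satisfies the stated identity. Because $\Phi$ is well-defined, every representative of $E_q$ induces the same parallel transport map, and so $\Phi(E_q)$ lies in the subgroup $\mathrm{Iso}(g_q) \subset \GL(T_q \C)$ of $g_q$-isometries. Since $S_q$ generates $\pi_1(\C,q)$ and $\mathrm{Iso}(g_q)$ is closed under composition and inversion, $\Phi(\pi_1(\C,q)) \subset \mathrm{Iso}(g_q)$, and by the surjectivity of $\Phi$ we obtain $\Hol_q(\nabla) \subset \mathrm{Iso}(g_q)$.

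In other words,
\[
g_q\bigl(\PP_{\gamma_q}(v_q),\PP_{\gamma_q}(w_q)\bigr) = g_q(v_q,w_q)
\]
for every $\gamma_q \in L_q$ and every $v_q, w_q \in T_q \C$, which is exactly the hypothesis of Theorem~\ref{thm:holonomy:schmidt}. Applying that theorem yields a Riemannian metric on $\C$ of which $\nabla$ is the Levi-Civita connection, completing the sufficiency direction. The only conceptually nontrivial ingredient is the homotopy invariance of parallel transport under flatness and the resulting homomorphism $\Phi$, both of which are already quoted from~\cite{AusMar55} in the lead-up to the statement; beyond that, the proof is a short group-theoretic argument followed by a direct appeal to Schmidt's theorem.
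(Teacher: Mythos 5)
Your proposal is correct and follows essentially the same route the paper intends: flatness plus Ambrose--Singer gives homotopy invariance of parallel transport and the surjective homomorphism $\pi_1(\C,q)\to\Hol_q(\nabla)$ quoted from~\cite{AusMar55}, after which invariance of $g_q$ under representatives of a generating set propagates to all of $\Hol_q(\nabla)$ because the $g_q$-isometries form a subgroup, and Theorem~\ref{thm:holonomy:schmidt} finishes the argument. The paper leaves these details implicit in the paragraph preceding the proposition; your write-up simply makes them explicit.
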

\subsection{Simply connected manifolds} 
When $\C$ is simply connected, $\Hol_q(\nabla) = \Hol_q^0(\nabla)$
because, by definition, all loops at $q$ in $\C$ are contractible to
$q$. By Theorem~\ref{thm:holonomy:Liesubgroup}, $\Hol_q^0(\nabla)$ is
a connected Lie group, implying that it is entirely characterized by
its Lie algebra, the so-called {\em holonomy algebra of $\nabla$}. We
will denote by $\hh$ the holonomy algebra. For simply connected
manifolds, one may express the invariance condition
in~\eqref{eq:holonomy_invariance} in infinitesimal form, giving rise
to a Lie algebraic metrizability criterion.
\begin{lem}[\cite{Van08}]\label{lem:holonomy:Lie_algebra_criterion}
Let $\nabla$ be a symmetric affine connection on a simply connected
manifold $\C$ and let $q \in \C$ be arbitrary.  A symmetric positive
definite bilinear form $g_q: T_q \C \times T_q \C \to \Re$ is
invariant under $\Hol_q(\nabla)$ if and only if for all $A \in \hh$
and all $v_q,w_q \in T_q \C$,
\begin{equation}\label{eq:holonomy_invariance:infinitesimal}
g_q(A v_q, w_q) + g_q(v_q, A w_q) =0.
\end{equation}
\end{lem}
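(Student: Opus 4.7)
The plan is to reduce the statement to the standard fact that a symmetric bilinear form on the tangent space of the identity of a connected Lie subgroup of $\GL(T_q \C)$ is invariant under the group action if and only if it is annihilated by the corresponding Lie algebra action. The simply connected hypothesis is used only to guarantee that $\Hol_q(\nabla) = \Hol_q^0(\nabla)$, which by Theorem~\ref{thm:holonomy:Liesubgroup}(i) is a \emph{connected} Lie subgroup of $\GL(T_q \C)$ with Lie algebra $\hh$.

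For the necessity direction, assume $g_q$ is invariant under $\Hol_q(\nabla)$. Pick any $A \in \hh$ and consider the smooth one-parameter subgroup $t \mapsto \exp(tA)$, whose image lies in $\Hol^0_q(\nabla) = \Hol_q(\nabla)$. The invariance identity
\[
g_q\bigl(\exp(tA) v_q,\exp(tA) w_q\bigr) = g_q(v_q,w_q)
\]
holds for all $t \in \Re$ and all $v_q,w_q \in T_q\C$. Differentiating in $t$ at $t=0$ and using $\tfrac{d}{dt}\big|_{t=0}\exp(tA) = A$ together with the Leibniz rule for the bilinear form $g_q$ yields exactly~\eqref{eq:holonomy_invariance:infinitesimal}.

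For the sufficiency direction, assume~\eqref{eq:holonomy_invariance:infinitesimal} holds for all $A \in \hh$. Fix $A \in \hh$ and $v_q,w_q \in T_q\C$, and define $\phi(t) := g_q(\exp(tA) v_q,\exp(tA) w_q)$. Differentiating and using the infinitesimal condition with $v_q,w_q$ replaced by $\exp(tA) v_q,\exp(tA) w_q$ gives
\[
\phi'(t) = g_q\bigl(A\exp(tA) v_q,\exp(tA) w_q\bigr) + g_q\bigl(\exp(tA) v_q,A\exp(tA) w_q\bigr) = 0,
\]
so $\phi$ is constant, hence equal to $g_q(v_q,w_q)$ for all $t$. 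Thus $g_q$ is invariant under every one-parameter subgroup of $\Hol^0_q(\nabla)$ generated by an element of $\hh$. Since $\Hol^0_q(\nabla)$ is a connected Lie group, every one of its elements can be written as a finite product $\exp(A_1)\cdots\exp(A_k)$ with $A_i \in \hh$, so invariance under each factor and a straightforward induction yield invariance under arbitrary elements of $\Hol_q(\nabla) = \Hol^0_q(\nabla)$.

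The only nontrivial ingredient is the last step, namely that a connected Lie group is generated as a group by the image of the exponential map of its Lie algebra; this is a classical fact (see, e.g., the theory of connected Lie groups in Warner or Kobayashi--Nomizu). Everything else is routine one-parameter-subgroup differentiation, and no additional curvature or holonomy-bundle considerations are needed.
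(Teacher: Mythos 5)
Your proof is correct. Note that the paper does not actually prove this lemma --- it is quoted from the cited reference [Van08] without argument --- so there is no in-paper proof to compare against; your argument is the standard one and fills that gap cleanly. You correctly isolate the only role of simple connectedness (forcing $\Hol_q(\nabla)=\Hol_q^0(\nabla)$, whose Lie algebra is $\hh$ by the paper's definition), the two differentiations along one-parameter subgroups are routine and valid since $g_q$ is a bilinear form on a finite-dimensional space, and the final step --- that a connected Lie group is generated by the image of its exponential map, so invariance under each $\exp(A_i)$ propagates to arbitrary group elements --- is the classical fact you cite. The argument does not depend on $g_q$ being symmetric or positive definite, which is consistent with the statement holding for any bilinear form; no issues.
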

\begin{rem}\label{rem:metrizability:simply_connected}
If $\nabla$ is real analytic\footnote{And in other cases when $\nabla$
  is smooth, see~\cite{kobayashi1996foundations}.}, the holonomy
algebra $\hh$ is entirely characterized by the curvature and its
covariant derivatives (see~\cite[Chapter II, Proposition 10.4 and
  Theorem 10.8]{kobayashi1996foundations}). Therefore, $\hh$ can be
computed in local coordinates. Then, a consequence of
Lemma~\ref{lem:holonomy:Lie_algebra_criterion} is that if $\C$
is simply connected and $\nabla$ is a real analytic affine connection
on $\C$,  $\nabla$ is metrizable if and only if it is locally
metrizable (i.e., metrizable in local coordinates).
\end{rem}
Exploiting Lemma~\ref{lem:holonomy:Lie_algebra_criterion} and the de
Rham decomposition of a Riemannian manifold, Kovalski in~\cite{Kow88}
gave an effective decision algorithm for metrizability of real
analytic affine connections on simply connected manifolds. We will not
review the algorithm here, but we refer the reader to the review
in~\cite{Van08}.

\subsection{One-dimensional manifolds}\label{sec:one_dim_manifolds}

If $\C$ is one-dimensional, then it is diffeomorphic to either $\Re$
or $\Se^1$. This situation occurs in Lagrangian control systems with
degree of underactuation one, when a regular VHC of codimension one is
enforced. In Theorem~\ref{thm:ILP:1dim} of
Section~\ref{sec:background_vhc}, we reviewed necessary and sufficient
conditions for the constrained dynamics to be Lagrangian.  Now we show
that the conditions of Theorem~\ref{thm:ILP:1dim} have an elegant
interpretation in the context of induced connections. We will recover
Theorem~\ref{thm:ILP:1dim} as a corollary of
Theorem~\ref{thm:ILP:general} and
Proposition~\ref{prop:holonomy:flat}.

We begin with the observation that any affine connection on a
one-dimensional manifold is flat, so if $\dim \C =1$, we may apply
Proposition~\ref{prop:holonomy:flat} to assess the metrizability of
the induced connection.  By comparing~\eqref{eq:Psi_functions}
and~\eqref{eq:constrained_dynamics:coordinates:one_dim}, we deduce
that 
\begin{equation}\label{eq:one_dim:geometric_data}
\GammaC_{11}^1(\th) = -\Psi_2(\th), \ \sigma_s(\grad P(\th))=-\Psi_1(\th),
\ \ \th \in \Theta
\end{equation}
where we recall that $\Theta$, defined in
Proposition~\ref{prop:reduced_dynamics:coordinates}, is $\Re$ or
$\Se^1$ depending on whether $\C \simeq \Re$ or $\C \simeq \Se^1$.

If $\C \simeq \Re$, then $\pi_1(\C,q)$ is trivial, and
Proposition~\ref{prop:holonomy:flat} is trivially satisfied. Thus a
connection on $\Re$ is always metrizable. Using $x \in \Re$ as
coordinate for $\C$, the Riemannian metric on $\Re$ will have the form
$g_x(v,w) = (1/ 2) k(x) v w$, with $k(x) >0$. By
Theorem~\ref{thm:ILP:general}, the constrained dynamics are Lagrangian
if and only if there exists a function $P_\C : \Re \to \Re$ such that
$\sigma_x(\grad P(x)) = k^{-1}(x) P_\C'(x)$. This identity is
satisfied by letting $P_\C$ be an antiderivative of the function $k(x)
\sigma_x(\grad P(x))$. Having established the metrizability of the
induced connection and the existence of $P_\C$, by
Theorem~\ref{thm:ILP:general} the constrained dynamics are always
Lagrangian.  This recovers the result of Theorem~\ref{thm:ILP:1dim}
when $\C \simeq \Re$.

Now consider the case $\C \simeq \Se^1$, so that $\Theta = \Se^1$.
Since $\pi_1(\Se^1,0) =(\Ze,+)$, $\pi_1(\Se^1,0)$ is generated by the
loop $\gamma_0: [0 \ 2\pi] \to \Se^1$, $t \mapsto t \Mod 2\pi$.  By
Proposition~\ref{prop:holonomy:flat}, the induced connection is
metrizable if and only if there exists a positive definite quadratic
form that is invariant under the transport map $\PP_{\gamma_0}$.
Recall the coordinate representation of the parallel transport map
in~\eqref{eq:parallel_transport:coordinates}. Using $t \in \Re$ as
local coordinates for $\Se^1$, we have that $\PP_{\gamma_0}(v)=X(2\pi)$,
where $X$ is the solution of the linear time-varying ODE
\[
\begin{aligned}
&\dot X = \big( \Psi_2\circ \pi (t) \big) X \quad t \in [0,2\pi)\\
&X(0) = v.
\end{aligned}
\]
To obtain the above ODE, we substituted the first identity
of~\eqref{eq:one_dim:geometric_data}
into~\eqref{eq:parallel_transport:coordinates}, and used the fact that
the coordinate representation of $\Psi_2(\th)$ is $\Psi_2 \circ \pi(t)$,
where $\pi(t) = t \Mod 2\pi$.  The solution of the above scalar linear
system is
\[
\PP_{\gamma_0}(v) = \left( \exp \int_0^{2\pi} \Psi_2 \circ \pi(z) dz
\right) v.
\]
We pick $\th=0$ as reference point on $\Se^1$. Then, modulo a
multiplicative positive scalar, the only positive definite bilinear
form on $T_0 \Se^1 \times T_0 \Se^1$ is $g_0(v_0,w_0) = v_0 w_0$, and
the invariance condition in Proposition~\ref{prop:holonomy:flat} reads
as
\[
 \exp \left( 2 \int_0^{2\pi} \Psi_2 \circ \pi(z) dz\right) v_0 w_0 =
 v_0 w_0.
\]
The above identity holds for arbitrary $v_0,w_0$ if and only if
\begin{equation}\label{eq:one_dim_metrizability}
\int_0^{2\pi} \Psi_2 \circ \pi(z) dz =0,
\end{equation}
or, equivalently, if the function $\hMC(x)$ in Theorem~\ref{thm:ILP:1dim}
is $2\pi$-periodic. Thus, the periodicity requirement on $\hMC$ in
part~(b) of Theorem~\ref{thm:ILP:1dim} is equivalent to the
requirement, in part (i) of Theorem~\ref{thm:ILP:general}, that the
induced connection be metrizable.

To find the Riemannian metric on $\Se^1$ (denoted $g$ in what
follows), we extend the inner product $g_0 : T_0 \Se^1 \times T_0
\Se^1 \to \Re$ to the whole $T\Se^1 \times T \Se^1$ through parallel
transport, as in~\eqref{eq:metric:parallel_translation}. For any $\th
\in \Se^1$, set $g_\th (v_\th,w_\th)
:=g_0(\PP_{\gamma^0_\th}(v_\th),\PP_{\gamma^0_\th}(w_\th))$, where
$\gamma^0_\th$ is an arbitrary curve from $\th$ to $0$ in $\Se^1$. For
instance, pick any $x \in \pi^{-1}(\th)$, and define $\gamma^0_\th:[0,
  x] \to \Se^1$ as $\gamma^0_\th(t) = \th-(t \Mod 2\pi)$. Then,
$\PP_{\gamma^0_\th}(v_\th) = \exp \big( - \int_0^{x} \Psi_2 \circ
\pi(z) d z\big) v_\th$.  Using $\PP_{\gamma^0_\th}$, the Riemannian
metric on $\Se^1$ is
\[
g_\th(v_\th,w_\th) = \exp\left( -2 \int_0^{\pi^{-1}(\th)} \Psi_2 \circ
\pi(z) d z \right).
\]
The above metric on $\Se^1$ gives  the kinetic energy of the
constrained dynamics in Theorem~\ref{thm:ILP:1dim}, since
it can be expressed as $g_\th(v_\th,w_\th) = \MC(\th) v_\th w_\th$.

Next, we turn our attention to condition (ii) of
Theorem~\ref{thm:ILP:general}, namely the existence of $P_{\C} :
\Se^1 \to \Re$ such that $\sigma(\grad P) = \gradC P_{\C}$,
or
\[
 -\Psi_1(\th) = \frac{1}{\MC(\th)} P_\C'(\th)
\]
Equivalently, we need to check when is it that the one-form on $\Se^1$
$-\Psi_1(\th) \MC(\th) d\th$ is exact.  This is the case if and only if
the integral of the form along $\Se^1$ is zero,
\[
0 = \int_{\Se^1} \Psi_1(\th) \MC (\th) d\th = \int_0^{2 \pi}
\big(\Psi_1 \circ \pi(\tau)\big) \hMC(\tau) d \tau.
\]
This is precisely the condition that the function $\hPC(x)$ in
Theorem~\ref{thm:ILP:1dim} be $2\pi$-periodic. Thus, the periodicity
requirement on $\hPC$ in part (b) of Theorem~\ref{thm:ILP:1dim} is
equivalent to the requirement, in part (ii) of
Theorem~\ref{thm:ILP:general}, that $\sigma(\grad P) = \gradC
P_\C$. We have thus shown that Theorem~\ref{thm:ILP:1dim} is a
corollary of Theorem~\ref{thm:ILP:general} and
Proposition~\ref{prop:holonomy:flat}.

\subsection{Two-dimensional manifolds}

When $\dim \C=2$, the metrizability of an affine connection has a
powerful characterization in terms of the Ricci
tensor~\cite{Thompson_1991,VanzZack_09}. We remark that the
situation $\dim \C =2$ arises in Lagrangian control systems with
degree of underactuation two, when a regular VHC of codimension two is
enforced.  The Ricci curvature tensor (see, e.g.,~\cite{book:598491})
is the $(0,2)$ tensor defined as
\[
\Ric(Y_p,Z_p) = \trace(X_p \mapsto R(X_p,Y_p) Z_p), \ p \in \C,
X_p,Y_p \in T_p \C,
\]
where $\trace(\cdot)$ denotes the trace of a linear map. If the affine
connection $\nabla$ is Riemannian with respect to a metric $g$, then
the Ricci tensor is proportional to the metric, 
\begin{equation}\label{eq:Ricci}
\Ric(X,Y) = g(X,Y)
K,
\end{equation}
where $K \in C^\infty(\C)$ denotes the Gaussian curvature of $\C$
(see~\cite[Lemma 8.7]{book:598491}). Recall
from~\eqref{eq:compatibility:total_der} that the compatibility of
$\nabla$ with $g$ means that $\nabla g=0$. If $\nabla$ has
nonvanishing curvature, then $K$ is nonvanishing, and setting
$\alpha=1/ K$, we have
\begin{equation}\label{eq:nabla_scaled_ricci}
\nabla( \alpha \Ric) = \nabla g =0.
\end{equation}
For each $X,Y,Z \in \X(M)$, we have
\[
\nabla_X (\alpha \Ric(Y,Z)) = X(\alpha) \Ric(Y,Z) + \alpha \nabla_X \Ric(Y,Z),
\]
and using~\eqref{eq:nabla_scaled_ricci} we deduce that
\[
\nabla_X \Ric(Y,Z) = - \frac{ d\alpha(X)}{\alpha}  \Ric(Y,Z) = d (-\ln|\alpha|) (X) \Ric(Y,Z).
\]
The above may be rewritten concisely using the total covariant
derivative and the tensor product as 
\[
\nabla \Ric =  d (-\ln|\alpha|) \otimes \Ric.
\]
A tensor field $F$ whose total covariant derivative satisfies $\nabla
F = \omega \otimes F$, where $\omega$ is a one-form, is said to be
{\em recurrent}. Thus, a necessary condition for metrizability of
$\nabla$ is that the Ricci tensor induced by $\nabla$ be recurrent,
with a one-form $\omega$ given by the exact differential of a function
in $C^\infty(\C)$. Further, in light of the fact that when $\nabla$ is
metrizable identity~\eqref{eq:Ricci} holds, another necessary
condition for metrizability is that the Ricci tensor be definite
(positive definite if $K > 0$, negative definite if $K < 0$).
Together, these conditions are also sufficient.
\begin{thm}[\cite{Thompson_1991},\cite{VanzZack_09}\footnote{In these papers, the authors investigate the existence of non-degenerate metrics, whereas we look for positive definite metrics. The statement of the theorem has been adapted accordingly.}]\label{thm:metrizability:2dim}
Let $\C$ be a two-dimensional connected manifold and $\nabla$ a
symmetric affine connection on $\C$ such that the curvature induced by
$\C$ is nowhere zero. Then $\nabla$ is metrizable if and only if the
Ricci tensor induced by $\nabla$ is definite and recurrent, with the
corresponding one-form being exact. If this is the case, and $\nabla
\Ric = d f \otimes \Ric$ holds for some $f \in C^\infty(\C)$, then all
Riemannian metrics compatible with $\nabla$ are given by
\[
g = \pm \exp(-f +b) \Ric, \ b \in \Re \text{ arbitrary},
\]
with plus sign if $\Ric$ is positive definite, and minus sign otherwise.
\end{thm}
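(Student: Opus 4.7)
My plan is to prove the two directions of the biconditional separately and then verify the parametrization of all compatible metrics; much of the argument is already implicit in the paragraphs immediately preceding the theorem, so the task is chiefly to organize those observations into a complete proof.

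For necessity, I assume $\nabla$ is the Riemannian connection of some metric $g$. Since $\dim \C = 2$, Lemma 8.7 of \cite{book:598491} (cited just above the statement) gives $\Ric = K g$, where $K \in C^\infty(\C)$ is the Gaussian curvature. Because $\nabla$ has nowhere-zero curvature and $\C$ is connected, $K$ is nowhere zero and has constant sign; combined with positive definiteness of $g$, this shows $\Ric$ is definite. Setting $\alpha := 1/K$, the Leibniz-rule expansion of $\nabla(\alpha \Ric) = \nabla g = 0$ already carried out in the paragraphs above the theorem reproduces $\nabla \Ric = d(-\ln|\alpha|)\otimes \Ric$, so $\Ric$ is recurrent with exact recurrence form $d(-\ln|\alpha|)$.

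For sufficiency, I assume $\Ric$ is definite and $\nabla \Ric = df \otimes \Ric$ for some $f \in C^\infty(\C)$. Let $\varepsilon = +1$ if $\Ric$ is positive definite and $\varepsilon = -1$ otherwise, fix $b \in \Re$, and define $g := \varepsilon\, e^{-f+b} \Ric$. Then $g$ is a smooth, symmetric, positive definite $(0,2)$-tensor field, i.e., a Riemannian metric. The Leibniz rule for $\nabla$ on tensors gives
\[
\nabla g = \varepsilon\, e^{-f+b}\bigl(-df\otimes \Ric + df\otimes \Ric\bigr) = 0,
\]
so $\nabla$ is compatible with $g$; since $\nabla$ is also symmetric by hypothesis, the Fundamental Lemma of Riemannian Geometry identifies $\nabla$ as the Levi-Civita connection of $g$. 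To recover every compatible metric, I take an arbitrary Riemannian $g'$ whose Levi-Civita connection is $\nabla$, apply necessity to obtain $g' = \alpha' \Ric$ with $\alpha' = 1/K'$ and $\nabla \Ric = d(-\ln|\alpha'|)\otimes \Ric$, and compare with the hypothesis $\nabla \Ric = df \otimes \Ric$. Since $\Ric$ is definite and hence nowhere zero, the recurrence form is uniquely determined, so $d(-\ln|\alpha'|) = df$; connectedness of $\C$ integrates this to $-\ln|\alpha'| = f + c$ for some constant $c$, and writing $b := -c$ yields $g' = \pm e^{-f+b}\Ric$ with sign equal to $\operatorname{sgn}(\alpha')$, which matches the two cases in the statement.

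The main obstacle I anticipate is the uniqueness-of-recurrence-form step in the parametrization argument: I must verify that if $df_1 \otimes \Ric = df_2 \otimes \Ric$ as $(0,3)$-tensor fields and $\Ric$ is nowhere zero, then $df_1 = df_2$ as one-forms. This follows pointwise by evaluating both sides on a triple $(X,Y,Z)$ with $\Ric(Y,Z) \neq 0$, but it is the place where the full definiteness of $\Ric$ (rather than merely recurrence) is essential to the argument. Everything that remains is routine Leibniz-rule bookkeeping and careful handling of the sign $\varepsilon$ needed to keep $g$ positive definite rather than merely nondegenerate.
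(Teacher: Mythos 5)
Your proof is correct and follows essentially the same route the paper sketches around the theorem statement: necessity via $\Ric = Kg$ and the Leibniz computation giving $\nabla\Ric = d(-\ln|\alpha|)\otimes\Ric$, and sufficiency by checking $\nabla\bigl(\pm e^{-f+b}\Ric\bigr)=0$ and invoking the Fundamental Lemma. Your added argument that the recurrence one-form is uniquely determined (since $\Ric$ is nowhere zero) to recover \emph{all} compatible metrics is a sound completion of the parametrization claim that the paper leaves implicit.
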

The idea behind the proof of sufficiency rests upon the fact that if
$\nabla \Ric = df \otimes \Ric$, then $\nabla \big( \exp(-f+b) \Ric
\big) =0$, and therefore both type $(0,2)$ tensor fields given by $\pm
\exp(-f+b) \Ric$ are compatible with $\nabla$. Since $\nabla$ is
symmetric, so is $\Ric$. Since $\Ric$ is definite, $g$ in the theorem
statement is positive definite.

\section{Examples}\label{sec:examples}
We now illustrate the results of Sections~\ref{sec:ILP}
and~\ref{sec:metrizability} with three examples.  First, we revisit
Examples~\ref{ex:circle} and~\ref{ex:sphere}. Then we investigate the
Lagrangian structure of a double pendulum on a cart subject to a
regular VHC of order 1.

\setcounter{examp}{0}
\begin{example}[Continued]
Consider again the dynamics of the planar point-mass of
Example~\ref{ex:circle}, a Lagrangian control system with
underactuation degree one.  The constrained dynamics on the unit
circle $\C$ depicted in Figure~\ref{fig:particle} are
\[
\ddot \th = -(\tan \alpha) \dot \th^2, \ s \in \Se^1.
\]
We want to determine the existence of a Lagrangian structure for these
constrained dynamics.  For this, we may use
Theorem~\ref{thm:ILP:1dim}, with $\Psi_1(\th)=0$ and $\Psi_2(s)=-\tan
\alpha$. We have
\[ 
\begin{aligned}
& \hMC(x) = \exp\left(-2 \int_0^x \tan(\alpha) dz \right) = \exp(-2 \tan(\alpha) x), \\
& \hPC(x)=0.
\end{aligned}
\]
The constrained dynamics are Lagrangian if and only if $\hMC$ is $2
\pi$-periodic, or $\alpha = \pm \pi/2 \Mod 2\pi$. Thus the constrained
point-mass is Lagrangian if and only if the control force is
orthogonal to the circle $\C$, in which case the Lagrangian is
$L(\th,\dot \th)=(1/2) \dot \th^2$. As predicted by
Proposition~\ref{prop:orthogonal_forces}, $L(\th,\dot \th)$ is the
restriction of the original Lagrangian to $\C$, i.e., $L(\th,\dot \th)
= (1/2) g(\dot q,\dot q)|_{\dot q = \varphi'(\th)\dot \th}$.
\end{example}
\begin{example}[Continued]
We return now to the unit mass of Example~\ref{ex:sphere}, a
Lagrangian control system with degree of underactuation two. We seek
to determine whether or not the constrained dynamics with coordinate
representation given in~\eqref{eq:constrained_dyn:sphere} are
Lagrangian. Since this Lagrangian control system has no potential
function, by Theorem~\ref{thm:ILP:general} we only need to check whether or
not the induced connection on $\C$ is metrizable. To this end, we will
use Theorem~\ref{thm:metrizability:2dim}. Since $\C$ is simply
connected and the induced connection is real analytic, it suffices to check the
recurrence condition of Theorem~\ref{thm:metrizability:2dim} in local
coordinates (see Remark~\ref{rem:metrizability:simply_connected}). 

In local coordinates $(\th^1,\th^2) \in W \subset \Re^2$, we have the
frame $\{\partial_1,\partial_2\}$ given by the natural basis of
$\Re^2$.  We first compute the coefficients $R_{ijk}^l$ of the
curvature endomorphism associated with $\nablaC$ in
$(\th^1,\th^2)$-coordinates via the formula\footnote{Here we use the
  fact that $[\partial_i,\partial_j]=0$.}  $R_{ijk}^l=d\th^l \left(
\nablaC_{\partial_i} \nablaC_{\partial_j} \partial_k -
\nablaC_{\partial_j} \nablaC_{\partial_i} \partial_k\right)$, $i,j,k,l
\in \{1,2\}$.  Using~\eqref{eq:connection_computation} for the
evaluation of $\nablaC$ we obtain
\begin{equation} \label{eq:curvature_coordinates}
R_{ijk}^l=\partial_{\th^i} \GammaC_{jk}^l - \partial_{\th^j}
\GammaC_{ik}^l + \sum_m \big(\GammaC_{jk}^m \GammaC_{im}^l -
\GammaC_{ik}^m \GammaC_{jm}^l \big),
\end{equation}
where $\GammaC_{ij}^k$ are the Christoffel symbols
in~\eqref{eq:Christoffel:induced_connection}.

The coefficients
$\Ric_{ij}$ of the Ricci tensor are then given by $\Ric_{ij} = \sum_k
R_{kij}^k$. Performing these computations, we get
\[
\Ric_{11} = \frac{1}{\cos^2(\th^1)+1}, \ \Ric_{12}=\Ric_{21} = 0, \  \Ric_{22}= \frac{2
  \sin^2(\th^1)}{(\sin^2(\th^1) - 2)^2}.
\]
Next, using the total covariant differentiation of
tensors,~\eqref{eq:covariant_derivative:tensor},~\eqref{eq:total_covariant_derivative},
and the Christoffel symbols~\eqref{eq:Christoffel:induced_connection},
we compute the coefficients $(\nablaC\Ric)_{ijk}$ of $\nablaC \Ric$ by
means of
\[
(\nablaC\Ric)_{ijk} =\nablaC_{\partial_i} \Ric(\partial_j,\partial_k). 
\]
By so doing, we find that the only nonzero coefficients
$(\nablaC\Ric)_{ijk}$ are
\begin{equation}\label{eq:nablaRic_sphere}
(\nablaC \Ric)_{111} = \frac{\displaystyle 2 \sin(2
   \th^1)}{\displaystyle (\cos^2(\th^1)+ 1)^2}, 
\ 
(\nablaC \Ric)_{122} =-\frac{\displaystyle 4 \sin(2 \th^1)
    \sin^2(\th^1)}{\displaystyle (\sin^2(\th^1) - 2)^3}.
\end{equation}
Next, we check whether or not $\nablaC \Ric = df \otimes \Ric$ for a
suitable smooth function $f$. Consider a generic one-form $\omega$ on
$W \subset \Re^2$, $\omega = \omega_1 d\th^1 + \omega_2 d\th^2$. The
nonzero coefficients $(\omega \otimes \Ric)_{ijk}$ of the tensor
product $\omega \otimes \Ric$ are \newcommand{\tp}{(\omega \otimes
  \Ric)}
\begin{equation}\label{eq:recurrence_sphere}
\begin{array}{ll}
\tp_{111}=\frac{\displaystyle\omega_1}{\displaystyle \cos^2(\th^1)+
  1}, 
& \tp_{122} = \frac{\displaystyle 2 \omega_1
  \sin^2(\th^1)}{\displaystyle(\sin^2(\th^1)- 2)^2}, \\[2ex]
\tp_{211} = \frac{\displaystyle\omega_2}{\displaystyle \cos^2(\th^1)+
  1}, 
& \tp_{222} = \frac{\displaystyle 2 \omega_2
  \sin^2(\th^1)}{\displaystyle(\sin^2(\th^1)- 2)^2}.
\end{array}
\end{equation}
By comparing~\eqref{eq:nablaRic_sphere}
and~\eqref{eq:recurrence_sphere}, we see that $\nablaC \Ric = \omega
\otimes \Ric$ if and only if $\omega_1 = 2 \sin(2 \th^1) / (
\cos^2(\th^1) +1)$ and $\omega_2=0$, i.e.,
\[
\omega = \frac{2 \sin(2 \th^1)}{\cos^2(\th^1)+1} d \th^1.
\]
The one-form $\omega$ is exact, $\omega = df$, with $f(s) = -4
\atanh(\sin^2(\th^1)/(\sin^2(\th^1)-4))$.  By
Theorem~\ref{thm:metrizability:2dim}, the constrained dynamics on $T\C$
are Lagrangian, and a Lagrangian function in local coordinates is
given by
\[
L(s,\dot s) = (1/2)\dot s\trans D_\C(s) \dot s,
\]
where $D_\C(s)$ is the matrix $\exp(-f(s)) [\Ric]$, with $[\cdot]$
denoting the matrix representation of the Ricci tensor. Specifically,
we have
\[
D_\C(s)=\begin{bmatrix} 1/2-\sin^2(\th^1)/4 & 0 \\  0 & \sin^2(\th^1)/2
\end{bmatrix}.
\]
In applying Theorem~\ref{thm:metrizability:2dim}, we used the plus
sign in the metric because the matrix $[\Ric]$ is positive
definite. One may check that the Euler-Lagrange equation with $L$ as
above gives the constrained
dynamics~\eqref{eq:constrained_dyn:sphere}. We stress once again that
although our computations are done in local coordinates, by the
argument in Remark~\ref{rem:metrizability:simply_connected} the
foregoing considerations imply that the constrained dynamics are {\em
  globally Lagrangian.}
\end{example}

\begin{example}\label{ex:double_pendulum_cart}
Consider the double pendulum on a cart depicted in
Figure~\ref{fig:double_pendulum_cart}, a Lagrangian control system
with three degrees-of-freedom and one input. We investigate two cases.
\begin{itemize}
\item {\bf Case (a):} the control input is the force imparted to the cart. 
\item {\bf Case (b):}
the control input is the torque imparted on the second revolute joint.
\end{itemize}
 We
assume that the pendulum rods are massless, that the masses of the two
pendulums are unitary, and that the rod lengths are unitary as well.
\begin{figure}[htb]
\psfrag{a}{$q_1$}
\psfrag{b}{$q_2$}
\psfrag{c}{$q_3$}
\psfrag{t}{$\tau$}
\psfrag{A}{(a)}
\psfrag{B}{(b)}
  \centerline{\includegraphics[height=.2\textheight]{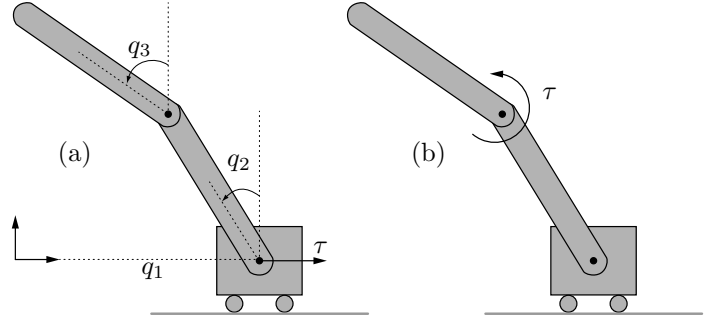}}
  \caption{The double pendulum on a cart of
    Example~\ref{ex:double_pendulum_cart}. Case (a):
    control force on the cart. Case (b): control torque
    on the last joint. The orthogonal frame in the figure is the
    inertial reference frame.}
  \label{fig:double_pendulum_cart}
\end{figure}
Using $q=(q_1,q_2,q_3) \in \Re \times \Se^1 \times \Se^1$ as
generalized coordinates, the Lagrangian of the system is $L(q,\dot q)
= \frac 1 2 g_q(\dot q,\dot q) - P(q)$, with
\[
g_q(\dot q,\dot q) = \dot q\trans \begin{bmatrix}
3 & -2 \cos(q_2) & -\cos(q_3) \\
-2 \cos(q_2) & 2 & \cos(q_2-q_3) \\
-\cos(q_3) & \cos(q_2-q_3) & 1
\end{bmatrix} \dot q,
\]
and
\[
P(q) = ( 2\cos(q_2)+ \cos(q_3)) G,
\]
where $G$ is the gravitational constant. In generalized coordinates,
the control force is the vector $B(q) \tau = [1 \ 0 \ 0]\trans\tau$ (case (a)) or
$[ 0 \ 0 \ 1]\trans \tau$ (case (b)), where $\tau \in \Re$ is the
control input. Letting $\Q = \Re \times \Se^1 \times \Se^1$, we have a
Lagrangian control system $(\Q,g,P,F)$, where $F = dq_1$ or $F=dq_3$,
respectively.

Consider the embedded submanifold of $\Q$, 
\[
\C = \{q \in \Q : q_3 = \rho(q_2)\}, 
\]
where $\rho: \Se^1 \to \Se^1$ is the smooth function
\[
\rho(q_2) = q_2 + 2 \arctan\big((1+ \sqrt 2) \tan(-q_2/2)\big).
\]
The configuration of the double pendulum on $\C$ is illustrated in
Figure~\ref{fig:vhc_double_pendulum}. The function $\rho$ above was
already used in~\cite{ConMagNieTos10} for path following control of a
PVTOL aircraft, and in~\cite{6286994} for pendubot swing-up. In the
context of the double pendulum on a cart of
Figure~\ref{fig:double_pendulum_cart}, the function $\rho$ induces the
interesting property that, on $\C$, the last link does not perform
full revolutions and remains confined to the upper half-plane.

\begin{figure}[htb]
  \centerline{\includegraphics[width=.35\textwidth]{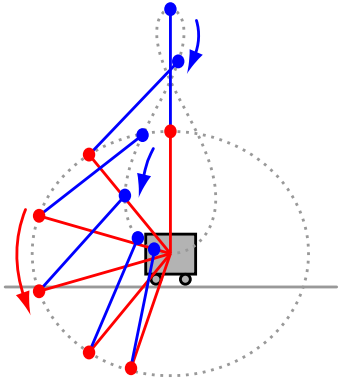}}
  \caption{Configurations of the double pendulum on the VHC $\C$ of
    Example~\ref{ex:double_pendulum_cart}. The missing configurations on the right-hand side are
    deduced by symmetry with respect to the vertical axis.}
  \label{fig:vhc_double_pendulum}
\end{figure}
Letting $h(q) = q_3 - \rho(q_3)$, one may check that, in both cases
(a) and (b), $dh_q D^{-1}(q) B(q) \neq 0$ for all $q \in \C$, so by
the equivalence of Definitions~\ref{defn:regularVHC:coordinates}
and~\ref{defn:regularVHC}, $\C$ is a
regular VHC.  The set $\C$ is diffeomorphic to a cylinder via the
diffeomorphism $\Re \times \Se^1 \to \C$, $(\th^1,\th^2) \mapsto
(\th^1,\th^2,\rho(\th^2))$. Using this global parametrization and the
formulas in~\eqref{eq:Christoffel:induced_connection}, one may show
that, in both cases (a) and (b), the only nonzero Christoffel symbols
of $\nablaC$ are $\GammaC_{22}^1(\th^2)$ and $\GammaC_{22}^2(\th^2)$,
and they are functions of $\th^2$ only. Similarly, the representation
of $\sigma(\grad P)$ in $(\th^1,\th^2)$-coordinates is a function of
$\th^2$ only. Therefore, in both cases (a) and (b) the constrained
dynamics~\eqref{eq:constrained_dynamics:coordinates} have the form
\begin{equation}\label{eq:constrained_dynamics:dpc}
\begin{aligned}
& \ddot \th^1 = - \GammaC_{22}^1(\th^2) (\dot \th^2)^2 - \lambda_1(\th^2) \\
& \ddot \th^2 = - \GammaC_{22}^2(\th^2) (\dot \th^2)^2 - \lambda_2(\th^2),
\end{aligned}
\end{equation}
where $(\lambda_1(\th^2),\lambda_2(\th^2))$ is the coordinates
representation of $\sigma(\grad P)$. The precise expressions are easy
to determine with the formulas in
Section~\ref{sec:constrained_dynamics:coordinates}, but they are too
long to report here. The function $\rho$ is odd, i.e., $\rho(-\th^2) =
- \rho(\th^2)$, and as a result, the functions $\GammaC_{22}^1(\th^2),
\GammaC_{22}^2(\th^2), \lambda_i(\th^2)$ are odd as well.

Now we investigate the Lagrangian nature of the constrained
dynamics. We will show that in case (a) (force on the cart), the
constrained dynamics are {\em not} Lagrangian, while in case (b)
(torque on the second revolute joint), they are. We begin by checking
condition (i) of Theorem~\ref{thm:ILP:general}, i.e., the
metrizability of $\nablaC$. The coefficients of the curvature
endomorphism in $(\th^1,\th^2)$ coordinates may be computed using the
formula~\eqref{eq:curvature_coordinates}. Owing to the fact that only
the symbols $\GammaC_{22}^k$ are nonzero, and that they are functions
of $\th^2$ only, we see from~\eqref{eq:curvature_coordinates} that the
curvature endomorphism is identically zero, i.e., the induced
connection is flat. We can then use
Proposition~\ref{prop:holonomy:flat} to determine whether or not the
induced connection is metrizable. Recall that $(\th^1,\th^2) \in \Re
\times \Se^1$. In what follows, the point $(0,0) \in \Re \times \Se^1$
will be denoted $0$ in subscripts.

The generator of the first homotopy group $\pi_1(\Re \times
\Se^1,(0,0))$ is $[\gamma_0]$, where $\gamma_0: [0,2\pi] \to \Re
\times \Se^1$ is the curve $t \mapsto (0,t \Mod 2\pi)$. In light of
Proposition~\ref{prop:holonomy:flat}, we seek a positive definite
quadratic form that is invariant under the transport map
$\PP_{\gamma_0}$. With reference
to~\eqref{eq:parallel_transport:coordinates}, to find $\PP_{\gamma_0}$
we solve the linear time-varying system
%
%
%
\[
\dot X = \begin{bmatrix} 0 \ & -\GammaC_{22}^1(t) \\ 0 \ & -\GammaC_{22}^2(t)
\end{bmatrix} X, \quad X(0) = v,
\]
%
%
%
and put $\PP_{\gamma_0}(v) = X(2\pi)$. The solution is $X(t) =
(X^1(t),X^2(t))$ with
\begin{equation}\label{eq:parallel_transport:dpc:solution}
\begin{aligned}
& X^1(t) = v_1 - v_2\int_0^t \GammaC_{22}^1(z) \exp\left(\int_0^z
  -\GammaC_{22}^2(u) du \right) dz  \\
& X^2(t) = v_2 \exp \left(\int_0^t -\GammaC_{22}^2(z) dz \right).
\end{aligned}
\end{equation}
Denote $I_1(t):=-\int_0^t \GammaC_{22}^2(z) dz$ and $I_2(t):=\int_0^t
\GammaC_{22}^1(z) \exp \big( I_1(z) \big) d z$. Then,
\[
\PP_{\gamma_0}(v) = \left[\begin{array}{rr} 1 & -I_2(2\pi) \\ 0 & \exp \big( I_1(2\pi) \big)
\end{array}\right] v.
\]
Recall that the functions $\Gamma_{22}^k$ are odd and
$2\pi$-periodic. The integral over one period of an odd periodic
function is zero.  Using this fact, we have $I_1(2\pi)=0$. Since the
integral of an odd function is an even function, $\exp(I_1(t))$ is
even, and its product with $\GammaC_{22}^1(t)$ is odd.  Thus $I_2(2
\pi)=0$. In conclusion, $\PP_{\gamma_0}$ is the identity map, implying
that any positive definite quadratic form on $( T_{(0,0)} \Re \times
\Se^1) \times ( T_{(0,0)} \Re \times \Se^1)$ is invariant under
$\PP_{\gamma_0}$. By Proposition~\ref{prop:holonomy:flat}, $\nablaC$
is metrizable. This result holds for both cases (a) and (b).

Next, we find all Riemannian metrics on $\Re \times \Se^1$ compatible
with $\nablaC$. Just like in Section~\ref{sec:one_dim_manifolds}, we
will use the notation $g$ (in place of $g_\C$) for such a
metric. Modulo scalar multiples, the generic positive definite
quadratic form on $( T_{(0,0)} \Re \times \Se^1) \times ( T_{(0,0)}
\Re \times \Se^1)$ is
\[
g_0(v_0,w_0) = v_0\trans \begin{bmatrix} 1 & a\\a & b
\end{bmatrix} w_0,
\]
with $a,b \in \Re$ such that $b> a^2$. As in
Section~\ref{sec:one_dim_manifolds}, the Riemannian metric on $\Re
\times \Se^1$ is found by parallel transporting $g_0$ by means
of~\eqref{eq:metric:parallel_translation}. The construction is
illustrated in Figure~\ref{fig:parallel_transport_dpc}.
\begin{figure}[htb]
  \psfrag{t}{$\th^2$}
  \psfrag{T}{$\th^1$}
\psfrag{1}{$\gamma_0^{\bar \th}$}
\psfrag{2}{$\gamma_{\bar \th}^\th$}
\psfrag{A}[c]{$(0,0)$}
\psfrag{B}[c]{$(\th^1,\th^2)$}
\psfrag{C}{$\bar \th$}
\psfrag{v}{$v$}
\psfrag{w}{$\PP_{\gamma^\th_0}(v)$}
  \centerline{\includegraphics[width=.6\textwidth]{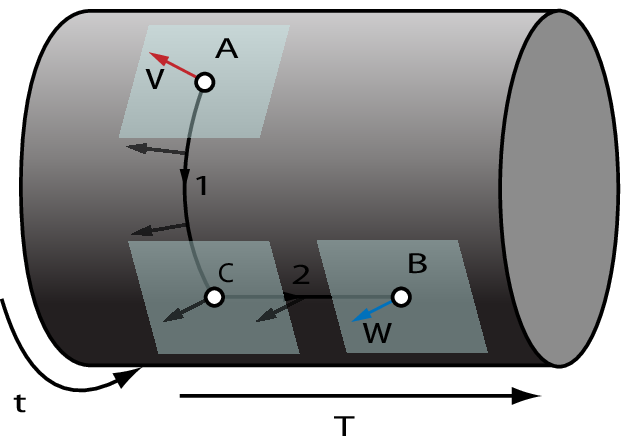}}
  \caption{Parallel transport on $\Re \times \Se^1$ from $(0,0)$ to
    $(\th^1,\th^2)$.}
  \label{fig:parallel_transport_dpc}
\end{figure}

Let $s=(\th^1,\th^2) \in \Re \times \Se^1$ be arbitrary, denote $\bar
\th:=(0,\th^2)$.  Pick any $x^2 \in \pi^{-1}(\th^2)$ and define a path
$\gamma^s_0(t)$ as the concatenation of paths $\gamma^{\bar \th}_0$
and $\gamma^\th_{\bar \th}$, where $\gamma^{\bar \th}_0: [0,x^2] \to
\Re \times \Se^1$ is defined as $\gamma^{\bar \th}_0(t):= (0,t \Mod
2\pi)$, and $\gamma^{\th}_{\bar \th}: [0,\th^1] \to \Re \times \Se^1$
is defined as $\gamma^\th_{\bar \th}(t):= (t,\th^2)$. Then,
$\gamma^\th_0 : [0,\th^1+x^2] \to \Re \times \Se^1$ is a piecewise
smooth path connecting $(0,0)$ to $(\th^1,\th^2)$. See
Figure~\ref{fig:parallel_transport_dpc}. By
Proposition~\ref{prop:parallel_transport:group_property}, we have
$\PP_{\gamma^\th_0} = \PP_{\gamma_0^{\bar \th}} \circ
\PP_{\gamma^\th_{\bar \th}}$. Since $\gamma^{\th}_{\bar \th}$ is a
translation along the real line, $\PP_{\gamma^{\th}_{\bar \th}}$ is
the identity map. On the other hand, $\PP_{\gamma^{\bar\th}_0}$ is
given by~\eqref{eq:parallel_transport:dpc:solution} at time
$\th^2$. In conclusion, the parallel transport map from $(0,0)$ to
$(\th^1,\th^2)$ is
\[
\PP_{\gamma_0^\th} = \left[ \begin{array}{cc} 1 & -I_2(\th^2) \\ 0 & \exp\big(I_1(\th^2))
  \end{array}\right].
\]
By Proposition~\ref{prop:parallel_transport:group_property}, the
parallel transport map from $(\th^1,\th^2)$ to $(0,0)$ is $
\PP_{\gamma^\th_0}^{-1}$. 
Now we define a Riemannian metric $g$ on $\Re
\times \Se^1$ by transporting tangent vectors in $T_{(\th^1,\th^2)}
(\Re \times \Se^1)$ to $T_{(0,0)}(\Re \times \Se^1)$ (cf.~\eqref{eq:metric:parallel_translation}):
\begin{equation}\label{eq:constrained_dynamics:metric:dpc}
\begin{aligned}
g(v_\th,w_\th)&:= 
g_0 \big( \PP_{\gamma^\th_0}^{-1}(v_\th),
\PP_{\gamma^\th_0}^{-1}(w_\th) \big) \\
&=v_\th\trans \left[ \begin{array}{cc} 1 & -I_2(\th^2) \\ 0 &
    \exp\big(I_1(\th^2))
  \end{array}\right]^{-\top} \begin{bmatrix} 1 \ & \ a\\a \ & \ b
\end{bmatrix}  \left[ \begin{array}{rr} 1 & -I_2(\th^2) \\ 0 & \exp\big(I_1(\th^2))
  \end{array}\right]^{-1} w_\th \\
&= v_\th\trans D_\C(\th^2) w_\th,
\end{aligned}
\end{equation}
where
\[
D_\C(\th^2):=\left[\begin{array}{cc} 1 & \exp(-I_1(\th^2))
    (I_2(\th^2)+a) \\ \exp(-I_1(\th^2)) (I_2(\th^2)+a) & \exp(-2
    I_1(\th^2)) (I_2(\th^2) + 2 a I_2(\th^2) + b)
  \end{array}\right].
\]
By this construction, for any $(a,b)$ with $b > a^2$, $g$ is a
Riemannian metric on $\Re \times \Se^1$, and $\nablaC$ is the
Levi-Civita connection of $g$. This result holds for both cases (a) and (b).

Next we turn to condition (ii) of Theorem~\ref{thm:ILP:general},
namely the existence of a smooth function $\PC: \C \to \Re$ such that
$\sigma(\grad P) = \gradC \PC$ or, in $(\th^1,\th^2)$-coordinates, a
function $\PC: \Re \times \Se^1 \to \Re$ such that
\[
\begin{bmatrix} \lambda_1(\th^2) \\ \lambda_2(\th^2) 
\end{bmatrix} = D_\C^{-1}(\th^2) \nabla_\th \PC,
\]
where $\lambda_1,\lambda_2$ are the functions in the constrained
dynamics~\eqref{eq:constrained_dynamics:dpc}. Equivalently, we seek a
function $\PC$ such that 
\begin{equation}\label{eq:one-form}
\begin{aligned}
(d\PC)_\th &= \big[ \lambda_1(\th^2) + \exp(-I_1(\th^2))
    (I_2(\th^2)+a) \lambda_2(\th^2) \big] d\th^1 \\
&+ \big[ \exp(-I_1(\th^2)) (I_2(\th^2)+a) \lambda_1(\th^2) \\
&+ \exp(-2I_1(\th^2))(I_2(\th^2) + 2a I_2(\th^2) +b) \lambda_2(\th)
    \big] d\th^2.
\end{aligned}
\end{equation}
Such a function $\PC$ exists if and only if the one-form on the
right-hand side of the above identity is exact. For this, we need to
check whether this one-form is closed, and whether its integral over
the loop $\gamma_0$ defined earlier is zero. As far as closedness of
the form is concerned, we need to check whether or not there exists $a
\in \Re$ such that
\begin{equation}\label{eq:closed_form}
\partial_{\th^2} \big[\lambda_1(\th^2) + \exp(-I_1(\th^2))
    (I_2(\th^2)+a) \lambda_2(\th^2) \big] =0.
\end{equation}
In case (a) when the control force is on the cart, one can show that
there is no value of $a$ for which~\eqref{eq:closed_form} holds,
whereas in case (b), when there is a control torque on the last
joint,~\eqref{eq:closed_form} is satisfied with $a=-1/2$. In the
latter case, the one-form on the right-hand side
of~\eqref{eq:one-form} is also exact because its components are odd
functions of $\th^2$. We choose any $b > a^2$, for instance $b=1$, and
obtain that the constrained dynamics
in~\eqref{eq:constrained_dynamics:dpc} are a Lagrangian system $(\Re
\times \Se^1,g,\PC)$, with $g$ given
in~\eqref{eq:constrained_dynamics:metric:dpc} and
\[
\PC(\th) = \int_{\gamma_0^\th} d\PC,
\]
with $d\PC$ given in~\eqref{eq:one-form}, and $\gamma^\th_0$ defined
earlier.

We  summarize  our results for this example in the following table.

\medskip
\begin{tabular}{|p{2.5cm}|p{2cm}|p{1.7cm}|p{2cm}|p{2cm}|}
\cline{2-5}
\multicolumn{1}{c|}{}
 & $q_3=\rho(q_2)$ regular? & $\nablaC$  metr'le? &
$\PC$ exists? &  Lagr. exists? \\ 
\cline{2-5} \hline
{\bf  Force on cart} 
& yes & yes & no & no \\
\hline
{\bf  Force on last joint} 
& yes & yes & yes & yes \\
\hline
\end{tabular}
\medskip

Figure~\ref{fig:phase_portrait_s2}  depicts the 
$(\th^2,\dot \th^2)$ orbits  (equivalently,  the $(q_2,\dot q_2)$ orbit) of a few
solutions of the constrained system in cases (a) and (b). In both
cases, we observe two types of behaviours: there are trajectories
along which $q_2$ exhibits a rocking motion around $\pi$, and others
along which $q_2$ performs full revolutions. The behaviour of $q_1$,
not shown in the figure, is a drifting motion with bounded,
sign-definite speed.
\end{example}
\begin{figure}[htb]
\psfrag{x}[c]{$q_2$}
\psfrag{y}[c]{$\dot q_2$}
\psfrag{a}[c]{\small $0$}
\psfrag{b}[c]{\small $\pi/2$}
\psfrag{c}[c]{\small $\pi$}
\psfrag{d}[c]{\small $3\pi/2$}
\psfrag{e}[c]{\small $2\pi$}
\centerline{\includegraphics[width=.45\textwidth]{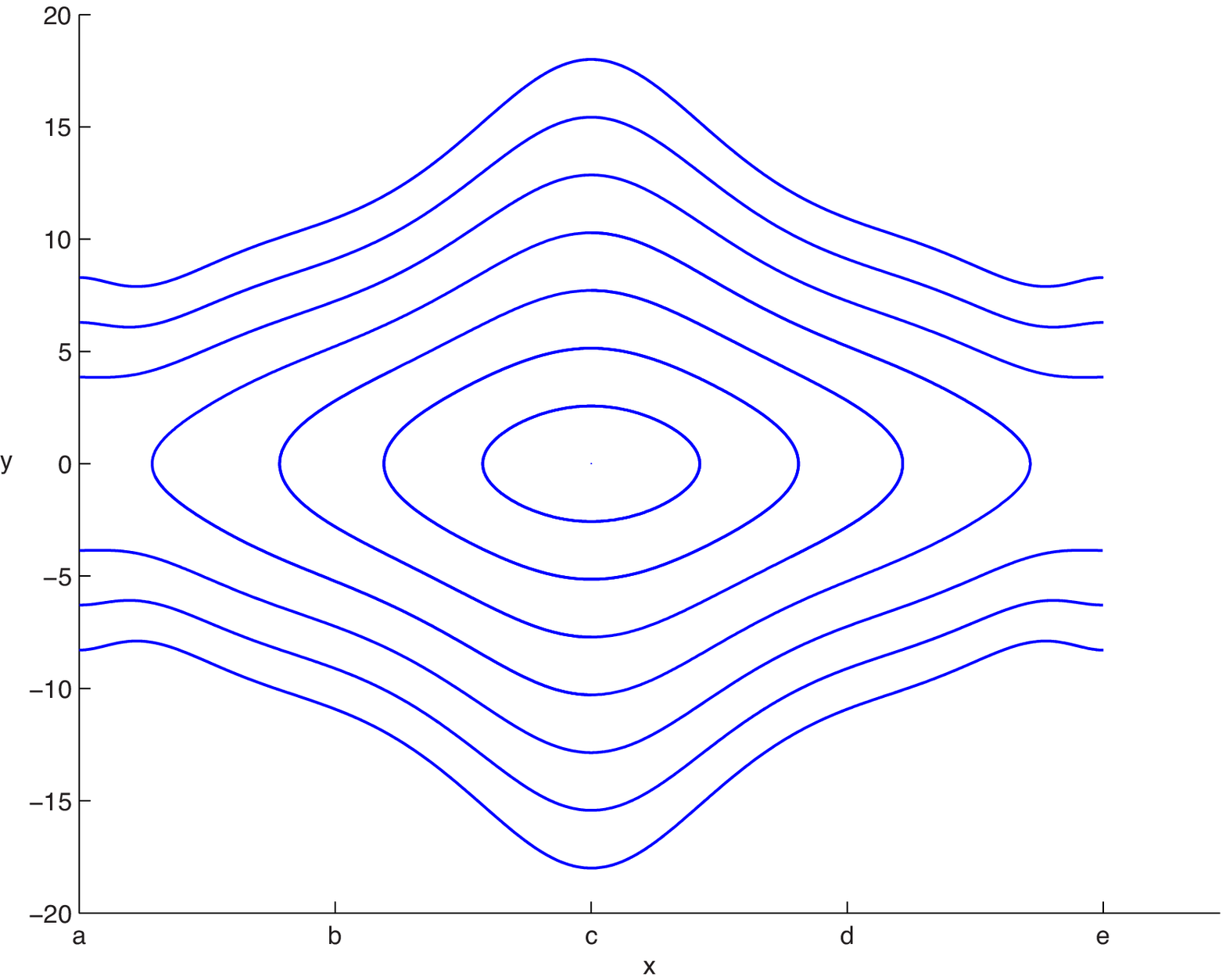}
\includegraphics[width=.45\textwidth]{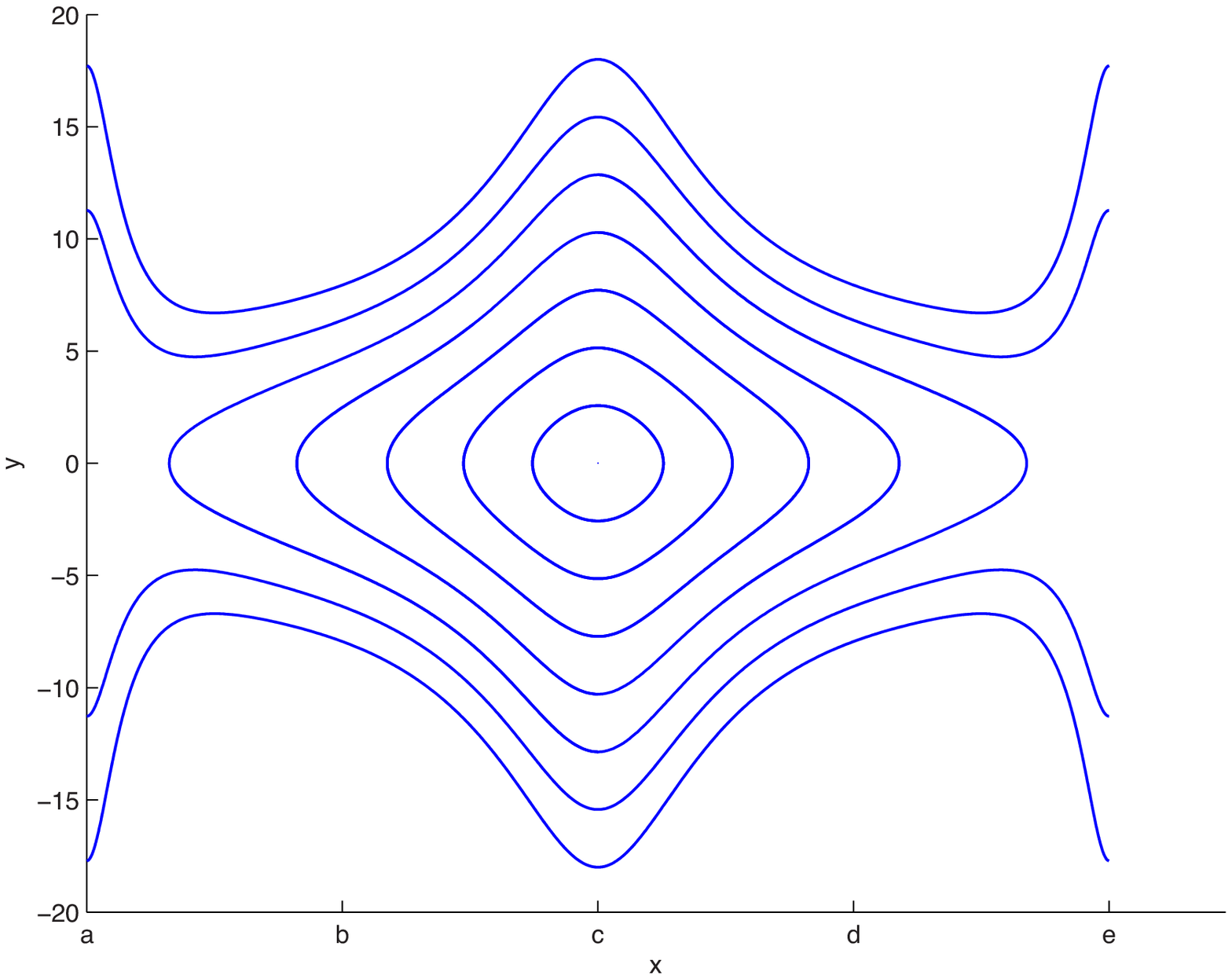}}
\caption{$(q_2,\dot q_2)$ orbits of a few
  solutions of the double pendulum on a cart subject to the VHC $q_3
  = \rho(q_2)$. On the left, case (a) (force on cart). On the right,
  case (b) (torque on last joint).}
\label{fig:phase_portrait_s2}
\end{figure}
\section{Conclusions} 
We introduced a coordinate-free framework of virtual holonomic
constraints for underactuated Lagrangian control systems, exposing the
role of induced connections in the characterization of constrained
dynamics. In this framework, the classical mechanics notion of ideal
holonomic constraint becomes the special case in which the
acceleration distribution is orthogonal to the VHC. We showed that,
generally, the constrained dynamics are not Lagrangian, and the
metrizability of the induced connection is key for the existence of a
Lagrangian structure.  When the constrained dynamics are forced (i.e.,
when the order of the regular VHC is less than the number of control
inputs), the problem remains open of determining when the constrained
dynamics are feedback equivalent to a Lagrangian control system. One
possible avenue of investigation for the solution of this latter
problem is to globalize the local theory of~\cite{RicRes10} in the
context of affine connection control systems.

\appendix

\section{Proof of Lemma~\ref{lem:controlled_invariance}}

We begin by observing that if $X \in \X(\Q)$, then for each $Y_q \in
T \Q$, $\vlft(X)(Y_q) \in \Ker d \pi_{Y_q}$. Indeed, 
\[
\begin{aligned}
d \pi_{Y_q} \big( \vlft(X)(Y_q)\big) &= d \pi_{Y_q} \big( (d/dt)|_{t=0}
(Y_q + t X(q)) \big) \\
&= \frac d {dt} \Big|_{t=0} \pi(Y_q +  t X(q) ) 
=\frac d {dt} \Big|_{t=0} q =0.
\end{aligned}
\]

Using the above and the property of geodesic sprays that $d\pi_{X_q}
(S(X_q)) = X_q$, for each $X_q \in T\C$ we have
\begin{equation}\label{eq:drift}
\begin{aligned}
d \pi_{X_q} \big( S(X_q) - \vlft(\grad P)(X_q) \big) &= d \pi_{X_q}
\big( S(X_q) ) - d \pi_{X_q} \big( \vlft(\grad P)(X_q) \big) \\
& = X_q \in T_q \C.
\end{aligned}
\end{equation}
From~\eqref{eq:drift} we deduce that
\begin{equation}\label{eq:drift2}
(\forall X_q \in T\C) \ S(X_q)- \vlft(\grad P)(X_q) \in ( d \pi_{X_q}
  )^{-1}(T_q \C),
\end{equation}
thus the proof of the lemma will be complete if we show that
\begin{equation}\label{eq:dpi_inverse_decomposition}
(\forall X_q \in T\C)\ ( d \pi_{X_q} )^{-1}(T_q \C) = T_{X_q} T\C
  \oplus \vlft(\D_A)(X_q).
\end{equation}
Let $X_q \in T\C$ be arbitrary. Since $\dim ( (d
\pi_{X_q})^{-1}(T_q \C)) = 2 n - m = (2n - 2m) + m = \dim( T_{X_q} T\C
) + \dim ( \vlft(\D_A)(X_q) )$, to
prove~\eqref{eq:dpi_inverse_decomposition} we need to show  that the
subspaces $T_{X_q} T\C$ and $\vlft(\D_A)(X_q)$ are independent and
contained in the subspace $( d \pi_{X_q} )^{-1}(T_q \C)$.

First we show that $T_{X_q} T\C \subset ( d
\pi_{X_q})^{-1}(T_q \C)$.
Let $Y_{X_q} \in T_{X_q} T\C$, then there exists a smooth curve in
$T\C$, $\gamma: \Re \to T\C$, such that $\gamma(0)=X_q$ and $\dot
\gamma(0)=Y_{X_q}$. We have $d\pi_{X_q} ( Y_{X_q} ) = (d/dt)|_{t=0}
\pi(\gamma(t))$. Since $\gamma$ is a curve on $T \C$, $\pi(\gamma(t))$
is a curve on $\C$, and thus $(d/dt)|_{t=0}
\pi(\gamma(t)) \in T_q \C$, which proves that $Y_{X_q} \in ( d
\pi_{X_q})^{-1}(T_q \C)$.

Next we show that $\vlft(\D_A)(X_q) \subset ( d \pi_{X_q})^{-1}(T_q
\C)$. This follows directly from the fact that $\vlft(\D_A)(X_q) \in
\Ker d\pi_{X_q} \subset ( d \pi_{X_q})^{-1}(T_q \C)$.

Finally, let $Y_{X_q} \in T_{X_q} T\C \cap \vlft(\D_A)(X_q)$. Then
there exists $F_q \in \D_A(q)$ such that $Y_{X_q} = (d/dt)|_{t=0}
(X_q+ t F_q)$. Moreover, $(d/dt)|_{t=0} (X_q+ t F_q) \in T_{X_q} T\C$,
which implies that $F_q \in T_q \C$. Since $F_q \in \D_A(q)$, the
regularity condition~\eqref{eq:transversality_condition} implies that
$F_q=0$, and therefore $Y_{X_q}=0$. Thus the subspaces $T_{X_q} T\C$
and $\vlft(\D_A)(X_q)$ are independent, which shows
that~\eqref{eq:dpi_inverse_decomposition}
holds. Together,~\eqref{eq:drift2}
and~\eqref{eq:dpi_inverse_decomposition} prove the lemma.
\qed

\bibliographystyle{AIMS} 
\bibliography{biblio}

\providecommand{\href}[2]{#2}
\providecommand{\arxiv}[1]{\href{http://arxiv.org/abs/#1}{arXiv:#1}}
\providecommand{\url}[1]{\texttt{#1}}
\providecommand{\urlprefix}{URL }
\begin{thebibliography}{10}

\bibitem{Amb_Sin_1953}
\newblock W.~Ambrose and I.~Singer,
\newblock A theorem on holonomy,
\newblock \emph{Transactions of the American Mathematical Society}, \textbf{75}
  (1953), pp. 428--443.

\bibitem{appell1911}
\newblock P.~Appell,
\newblock Exemple de mouvement d'un point assujetti \`a une liaison exprim\`ee
  par une relation non-lin\'eaire entre les composantes de la vitesse,
\newblock \emph{Rend. Circ. Mat. Palermo}, \textbf{32} (1911), 48--50.

\bibitem{ArnoldClassicalMech}
\newblock V.~I. Arnol'd,
\newblock \emph{Mathematical Methods of Classical Mechanics (Graduate Texts in
  Mathematics, Vol. 60)},
\newblock 2nd edition,
\newblock Springer, 1989.

\bibitem{AusMar55}
\newblock L.~Auslander and L.~Markus,
\newblock Holonomy of flat affinely connected manifolds,
\newblock \emph{Annals of Mathematics}, \textbf{62} (1955), 139--151.

\bibitem{Beghin22}
\newblock H.~Beghin,
\newblock \emph{\'{E}tude th\'eorique des compas gyrostatiques {A}nsch\"utz et
  {S}perry},
\newblock PhD thesis, Facult\'e des sciences de Paris, 1922.

\bibitem{Boo86}
\newblock W.~Boothby,
\newblock \emph{An introduction to differentiable manifolds and Riemannian
  geometry}, vol. 120,
\newblock 2nd edition,
\newblock Academic press, 1986.

\bibitem{geomControlBullo}
\newblock F.~Bullo and A.~Lewis,
\newblock \emph{Geometric Control of Mechanical Systems},
\newblock Texts in Applied Mathematics, Springer, 2005.

\bibitem{CheGriShi08}
\newblock C.~Chevallereau, J.~Grizzle and C.~Shih,
\newblock Asymptotically stable walking of a five-link underactuated {3D}
  bipedal robot,
\newblock \emph{IEEE Transactions on Robotics}, \textbf{25} (2008), 37--50.

\bibitem{Clarke}
\newblock F.~Clarke, Y.~S. Ledyaev, R.~Stern and P.~Wolenski,
\newblock \emph{Nonsmooth Analyisis and Control Theory},
\newblock Springer, 1991.

\bibitem{ConMagNieTos10}
\newblock L.~Consolini, M.~Maggiore, C.~Nielsen and M.~Tosques,
\newblock Path following for the {PVTOL} aircraft,
\newblock \emph{Automatica}, \textbf{46} (2010), 1284--1296.

\bibitem{ConCos15}
\newblock L.~Consolini and A.~Costalunga,
\newblock Induced connections on virtual holonomic constraints,
\newblock in \emph{IEEE Conference on Decision and Control (CDC)},
\newblock IEEE, 2015,
\newblock 139--144.

\bibitem{doCarmo}
\newblock M.~do~Carmo,
\newblock \emph{Riemannian Geometry},
\newblock Birkh{\"a}user Boston, 1992.

\bibitem{FreRobShiJoh08}
\newblock L.~Freidovich, A.~Robertsson, A.~Shiriaev and R.~Johansson,
\newblock Periodic motions of the pendubot via virtual holonomic constraints:
  Theory and experiments,
\newblock \emph{Automatica}, \textbf{44} (2008), 785--791.

\bibitem{GodNat14}
\newblock L.~Godinho and J.~Nat{\'a}rio,
\newblock \emph{An Introduction to {R}iemannian Geometry: With Applications to
  Mechanics and Relativity},
\newblock Springer, 2014.

\bibitem{Isi95}
\newblock A.~Isidori,
\newblock \emph{Nonlinear {C}ontrol {S}ystems},
\newblock 3rd edition,
\newblock Springer, New York, 1995.

\bibitem{JanMagCon12}
\newblock D.~Jankuloski, M.~Maggiore and L.~Consolini,
\newblock Further results on virtual holonomic constraints,
\newblock in \emph{in {P}roceedings of the 4\textsuperscript{th} {IFAC}
  {W}orkshop on {L}agrangian and {H}amiltonian {M}ethods for {N}onlinear
  {C}ontrol, {B}ertinoro, {I}taly}, 2012.

\bibitem{kobayashi1996foundations}
\newblock S.~Kobayashi and K.~Nomizu,
\newblock \emph{Foundations of Differential Geometry},
\newblock no. v. 1 in A Wiley Publication in Applied Statistics, Wiley, 1996.

\bibitem{Kow88}
\newblock O.~Kowalski,
\newblock Metrizability of affine connections on analytic manifolds,
\newblock \emph{Note di Matematica}, \textbf{VIII} (1988), 1--11.

\bibitem{book:598491}
\newblock J.~Lee,
\newblock \emph{Riemannian manifolds. An introduction to curvature},
\newblock Graduate texts in mathematics 0176, Springer, 1997.

\bibitem{lee2012introduction}
\newblock J.~Lee,
\newblock \emph{Introduction to Smooth Manifolds},
\newblock Graduate Texts in Mathematics, Springer, 2012.

\bibitem{6286994}
\newblock M.~Maggiore and L.~Consolini,
\newblock Virtual holonomic constraints for {E}uler-{L}agrange systems,
\newblock \emph{Automatic Control, IEEE Transactions on}, \textbf{58} (2013),
  1001--1008.

\bibitem{nolcos_2013_lagr}
\newblock A.~Mohammadi, M.~Maggiore and L.~Consolini,
\newblock When is a {L}agrangian control system with virtual holonomic
  constraints {L}agrangian?,
\newblock in \emph{Proceedings of NOLCOS 2013}, vol. 9, part 1, 2013,
\newblock 512 -- 517.

\bibitem{MohMagCon17}
\newblock A.~Mohammadi, M.~Maggiore and L.~Consolini,
\newblock On the lagrangian structure of reduced dynamics under virtual
  holonomic constraints,
\newblock \emph{ESAIM: Control, Optimisation and Calculus of Variations},
  \textbf{23} (2017), 913--935.

\bibitem{nakanishi-2000}
\newblock J.~Nakanishi, T.~Fukuda and D.~Koditschek,
\newblock A brachiating robot controller,
\newblock \emph{IEEE Transactions on Robotics and Automation}, \textbf{16}
  (2000), 109--123.

\bibitem{NijSch90}
\newblock H.~Nijmeijer and A.~van~der Schaft,
\newblock \emph{Nonlinear Dynamical Control Systems.},
\newblock Springer - Verlag, New York, 1990.

\bibitem{nomizu1994affine}
\newblock K.~Nomizu and T.~Sasaki,
\newblock \emph{Affine differential geometry: geometry of affine immersions},
\newblock Cambridge University Press, 1994.

\bibitem{PleGriWesAbb03}
\newblock F.~Plestan, J.~Grizzle, E.~Westervelt and G.~Abba,
\newblock Stable walking of a 7-{DOF} biped robot,
\newblock \emph{IEEE Transactions on Robotics and Automation}, \textbf{19}
  (2003), 653--668.

\bibitem{Poo07}
\newblock W.~Poor,
\newblock \emph{Differential geometric structures},
\newblock Dover Publications, 2007.

\bibitem{RicRes10}
\newblock S.~Ricardo and W.~Respondek,
\newblock When is a control system mechanical?,
\newblock \emph{Journal of Geometric Mechanics}, \textbf{2} (2010), 265--302.

\bibitem{Schmidt73}
\newblock B.~Schmidt,
\newblock Conditions on a connection to be a metric connection,
\newblock \emph{Communications in Mathematical Physics}, \textbf{29} (1973),
  55--59.

\bibitem{ShiPerWit05}
\newblock A.~Shiriaev, J.~Perram and C.~{Canudas-de-Wit},
\newblock Constructive tool for orbital stabilization of underactuated
  nonlinear systems: Virtual constraints approach,
\newblock \emph{IEEE Transactions on Automatic Control.}, \textbf{50} (2005),
  1164--1176.

\bibitem{ShiRobPerSan06}
\newblock A.~Shiriaev, A.~Robertsson, J.~Perram and A.~Sandberg,
\newblock Periodic motion planning for virtually constrained {E}uler-{L}agrange
  systems,
\newblock \emph{Systems \& Control Letters}, \textbf{55} (2006), 900--907.

\bibitem{ShiFreGus10}
\newblock A.~Shiriaev, L.~Freidovich and S.~Gusev,
\newblock Transverse linearization for controlled mechanical systems with
  several passive degrees of freedom,
\newblock \emph{IEEE Transactions on Automatic Control}, \textbf{55} (2010),
  893--906.

\bibitem{Thompson_1991}
\newblock G.~Thompson,
\newblock Local and global existence of metrics in two-dimensional affine
  manifolds,
\newblock \emph{Chinese Journal of Physics}, \textbf{29} (1991), pp. 529--532.

\bibitem{Van08}
\newblock A.~Van\v{z}urov\'a,
\newblock Metrization problem for linear connections and holonomy algebras,
\newblock \emph{Archivum Mathematicum (BRNO)}, \textbf{44} (2008), 511--521.

\bibitem{VanzZack_09}
\newblock A.~Van\v{z}urov\'a and P.~{\v{Z}}{\'a}{\v{c}}kov{\'a},
\newblock Metrizability of connections on two-manifolds,
\newblock \emph{Acta Universitatis Palackianae Olomucensis. Facultas Rerum
  Naturalium. Mathematica}, \textbf{48} (2009), 157--170.

\bibitem{WesGriCheChoMor07}
\newblock E.~Westervelt, J.~Grizzle, C.~Chevallereau, J.~Choi and B.~Morris,
\newblock \emph{Feedback Control of Dynamic Bipedal Robot Locomotion},
\newblock Taylor \& Francis, CRC Press, 2007.

\bibitem{WesGriKod03}
\newblock E.~Westervelt, J.~Grizzle and D.~Koditschek,
\newblock Hybrid zero dynamics of planar biped robots,
\newblock \emph{IEEE Transactions on Automatic Control}, \textbf{48} (2003),
  42--56.

\end{thebibliography}
\end{document}